\newcommand*{\mint}[1]{%
  \mint@l{#1}{}%
}
\newcommand*{\mint@l}[2]{%
  \@ifnextchar\limits{%
    \mint@l{#1}%
  }{%
    \@ifnextchar\nolimits{%
      \mint@l{#1}%
    }{%
      \@ifnextchar\displaylimits{%
        \mint@l{#1}%
      }{%
        \mint@s{#2}{#1}%
      }%
    }%
  }%
}
\newcommand*{\mint@s}[2]{%
  \@ifnextchar_{%
    \mint@sub{#1}{#2}%
  }{%
    \@ifnextchar^{%
      \mint@sup{#1}{#2}%
    }{%
      \mint@{#1}{#2}{}{}%
    }%
  }%
}
\def\mint@sub#1#2_#3{%
  \@ifnextchar^{%
    \mint@sub@sup{#1}{#2}{#3}%
  }{%
    \mint@{#1}{#2}{#3}{}%
  }%
}
\def\mint@sup#1#2^#3{%
  \@ifnextchar_{%
    \mint@sup@sub{#1}{#2}{#3}%
  }{%
    \mint@{#1}{#2}{}{#3}%
  }%
}
\def\mint@sub@sup#1#2#3^#4{%
  \mint@{#1}{#2}{#3}{#4}%
}
\def\mint@sup@sub#1#2#3_#4{%
  \mint@{#1}{#2}{#4}{#3}%
}
\newcommand*{\mint@}[4]{%
  \mathop{}%
  \mkern-\thinmuskip
  \mathchoice{%
    \mint@@{#1}{#2}{#3}{#4}%
        \displaystyle\textstyle\scriptstyle
  }{%
    \mint@@{#1}{#2}{#3}{#4}%
        \textstyle\scriptstyle\scriptstyle
  }{%
    \mint@@{#1}{#2}{#3}{#4}%
        \scriptstyle\scriptscriptstyle\scriptscriptstyle
  }{%
    \mint@@{#1}{#2}{#3}{#4}%
        \scriptscriptstyle\scriptscriptstyle\scriptscriptstyle
  }%
  \mkern-\thinmuskip
  \int#1%
  \ifx\\#3\\\else_{#3}\fi
  \ifx\\#4\\\else^{#4}\fi
}
\newcommand*{\mint@@}[7]{%
  \begingroup
    \sbox0{$#5\int\m@th$}%
    \sbox2{$#5\int_{}\m@th$}%
    \dimen2=\wd0 %
    \let\mint@limits=#1\relax
    \ifx\mint@limits\relax
      \sbox4{$#5\int_{\kern1sp}^{\kern1sp}\m@th$}%
      \ifdim\wd4>\wd2 %
        \let\mint@limits=\nolimits
      \else
        \let\mint@limits=\limits
      \fi
    \fi
    \ifx\mint@limits\displaylimits
      \ifx#5\displaystyle
        \let\mint@limits=\limits
      \fi
    \fi
    \ifx\mint@limits\limits
      \sbox0{$#7#3\m@th$}%
      \sbox2{$#7#4\m@th$}%
      \ifdim\wd0>\dimen2 %
        \dimen2=\wd0 %
      \fi
      \ifdim\wd2>\dimen2 %
        \dimen2=\wd2 %
      \fi
    \fi
    \rlap{%
      $#5%
        \vcenter{%
          \hbox to\dimen2{%
            \hss
            $#6{#2}\m@th$%
            \hss
          }%
        }%
      $%
    }%
  \endgroup
}
\def\rr{{\mathbb R}}
\def\rn{{{\rr}^n}}
\def\fz{\infty}
\def\loc{{\mathop\mathrm{\,loc\,}}}
\def\lz{\lambda}
\def\dz{\delta}
\def\ez{\epsilon}
\def\gz{{\gamma}}
\def\bint{{\ifinner\rlap{\bf\kern.35em--}
\int\else\rlap{\bf\kern.45em--}\int\fi}\ignorespaces}
\def\bbint{{\ifinner\rlap{\bf\kern.35em--}
\hspace{0.078cm}\int\else\rlap{\bf\kern.45em--}\int\fi}\ignorespaces}
\newtheorem{thm}{Theorem}[section]
\newtheorem{lem}[thm]{Lemma}
\newtheorem{rem}[thm]{Remark}
\newtheorem{cor}[thm]{Corollary}
\numberwithin{equation}{section}
\title
{\Large\bf   Second order  regularity
  for elliptic and parabolic equations involving $p$-Laplacian
 via a fundamental  inequality
\footnotetext{\hspace{-0.35cm}
\endgraf
This project  was supported by National Natural Science of Foundation of China (No.11522102, 11871088)
and the NSF under the agreement DMS-1600593.
 \endgraf $^\ast$ Corresponding author.
}}
\author{Hongjie Dong, Peng Fa, Yi Ru-Ya Zhang, and Yuan Zhou$^\ast$}
 \date{\today}
\begin{document}

\arraycolsep=1pt
\allowdisplaybreaks
 \maketitle

\begin{center}
\begin{minipage}{13.5cm}\small
 \noindent{\bf Abstract.}\quad
Denote by $\Delta$  the Laplacian and by $\Delta_\fz $ the $\fz$-Laplacian.  A fundamental inequality is proved for the algebraic  structure of
 $\Delta v\Delta_\fz v$: for every $v\in C^\fz$,
$$\left|  { |D^2vDv|^2} -  {\Delta v \Delta_\fz v } -\frac12[|D^2v|^2-(\Delta v)^2]|Dv|^2\right| \le \frac{n-2}2 [|D^2v|^2{|Dv|^2}-  |D^2vDv|^2 ].
$$
Based on this, we prove the following results:
   \begin{enumerate}
  \item[ (i)]  For any $p$-harmonic functions $u$, $p\in(1,2)\cup(2,\infty)$, we have
$$|Du|^{\frac{p-\gz}2}Du\in W^{1,2}_\loc\quad \mbox{with $\gz<\min\{p+\frac{n-1}{n},3+\frac{p-1}{n-1}\}$}.$$
   As a by-product, when $p\in(1,2)\cup(2,3+\frac2{n-2})$,
 we
    reprove the known $W^{2,q}_\loc$-regularity  of $p$-harmonic functions  for some $q>2$.

  \item[(ii)] When  $n\ge 2$ and  $p\in(1,2)\cup(2,3+\frac2{n-2})$, the  viscosity solutions to parabolic normalized $p $-Laplace equation have
 the $W_\loc^{2,q}$-regularity in the spatial variable and the $W_\loc^{1,q}$-regularity in the time variable
for   some $q>2$.
 Especially, when $n=2$ an open question in \cite{hl} is completely answered.

  \item[(iii)] When $n\ge  1 $ and $p\in(1,2)\cup(2,3)$,  the weak/viscosity solutions to parabolic   $p $-Laplace equation have the $W_\loc^{2,2}$-regularity in the spatial variable and  the $W_\loc^{1,2}$-regularity in the time variable. The range of $p$ (including $p=2$ from the classical result) here is sharp for the $W_\loc^{2,2}$-regularity.
\end{enumerate}

\end{minipage}
\end{center}


\section{Introduction}

Denote by $\Delta$ and $\Delta_\fz$  the  Laplacian  and  $\fz$-Laplacian, respectively, in $\rn$ with $n\ge2$, i.e.\
 $$\Delta v={\rm div}(Dv)\quad\mbox{and}\quad \Delta_\fz v=D^2vDv\cdot Dv\quad\forall v\in C^\fz.$$
Recall that, the following identity in the plane
\begin{align}\label{keyiden}
 { |D^2vDv|^2} -  {\Delta v \Delta_\fz v } =\frac12[|D^2v|^2-(\Delta v)^2]|Dv|^2\end{align}
is the key to the  higher order Sobolev regularity of infinity harmonic functions in the plane established in \cite{kzz}.
In this paper, we generalize \eqref{keyiden} to the higher dimension: For every $v\in C^\fz$  \begin{equation}\label{keyineq}\left|  { |D^2vDv|^2} -  {\Delta v \Delta_\fz v } -\frac12[|D^2v|^2-(\Delta v)^2]|Dv|^2\right| \le \frac{n-2}2 [|D^2v|^2{|Dv|^2}-  |D^2vDv|^2 ];
\end{equation}
 see Lemma 2.1.

It turns out that \eqref{keyineq} is a basic tool to study the second order Sobolev regularity of equations involving $p$-Laplacian or  its normalization.
Here, for $1<p<\fz$, the $p$-Laplacian $\Delta_p$ and  its normalization $\Delta^N_p$ are defined as
$$\Delta_pv:={\rm div}(|Dv|^{p-2}Dv) \quad \mbox{and}\quad\Delta^N_pv:=|Dv|^{2-p}\Delta_pv,$$
respectively. Throughout the whole paper,  $\Omega$ is always a domain of $\rn$ and $T$ is a positive real number.

  \begin{thm}\label{thm1}
  Let  $n\ge 2$,  $p\in(1,2)\cup(2,\fz)$  and  $\gz<\gz_{n,p}$, where
$\gz_{n,p}:=\min\{p+\frac{n}{n-1},3+\frac{p-1}{n-1}\}.$
For any  weak/viscosity solution $u$ to
 \begin{equation}\label{plap}
\Delta_pu=0 \quad \mbox{in $\Omega$},
 \end{equation}
we have
 $|Du|^{\frac{p-\gz}2}Du\in W^{1,2}_\loc (\Omega) $ and
\begin{equation*}
\int_B |D[|Du|^{\frac{p-\gz}2}Du]|^2\,dx\le C(n,p,\gz) \frac1{r^2}\int_{2B}
  |Du|^{p-\gz+2}  \,dx \quad\forall B=B(z,r)\Subset 2B\Subset \Omega.
  \end{equation*}
\end{thm}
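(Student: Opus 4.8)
The plan is to first establish the estimate for smooth solutions of a regularized, nondegenerate equation and then pass to the limit. Fix $p\in(1,2)\cup(2,\infty)$. For $\varepsilon\in(0,1)$ consider the equation $\mathrm{div}((\varepsilon+|Du^\varepsilon|^2)^{(p-2)/2}Du^\varepsilon)=0$ on a slightly smaller ball; by standard De Giorgi--Nash--Moser theory and Schauder estimates for the linearized equation, $u^\varepsilon\in C^\infty$ and $u^\varepsilon\to u$ in $W^{1,p}_{\mathrm{loc}}$, so it suffices to prove the asserted inequality uniformly in $\varepsilon$ for $v=u^\varepsilon$ with $|Dv|$ replaced by $(\varepsilon+|Dv|^2)^{1/2}=:\beta$. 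Thus from now on I work with a smooth $v$ solving $\mathrm{div}(\beta^{p-2}Dv)=0$, which expands to $\beta^{p-2}\Delta v+(p-2)\beta^{p-4}\Delta_\infty v=0$, i.e. $\beta^{2}\Delta v=-(p-2)\Delta_\infty v$.

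The core step is a Caccioppoli-type identity obtained by differentiating the equation. Differentiate $\mathrm{div}(\beta^{p-2}Dv)=0$ in the direction $x_k$, multiply by $\beta^{p-\gamma-2}\,\partial_k v\,\eta^2$ for a cutoff $\eta$, sum over $k$, and integrate by parts. The left-hand side produces, after expanding $D(\beta^{p-\gamma-2}Dv)$, a quadratic form in $D^2 v$ of the schematic shape
\[
\beta^{p-\gamma-2}\Bigl[|D^2v|^2\beta^2+(p-\gamma-2)|D^2vDv|^2+(p-2)\bigl(|D^2vDv|^2+\tfrac{p-\gamma-2}{\beta^2}\Delta_\infty v\,(D^2vDv\cdot Dv)\bigr)-\ \cdots\Bigr],
\]
together with a term $\beta^{p-\gamma-2}(\Delta v)\Delta_\infty v\,\beta^{-2}$ times constants coming from the equation itself, and the cutoff error is controlled by $\frac{1}{r^2}\int_{2B}\beta^{p-\gamma+2}$ after a Cauchy--Schwarz/Young splitting. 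The point is to show that the resulting bilinear form in the Hessian is bounded below by a positive multiple of $|D[\beta^{(p-\gamma)/2}Dv]|^2$, which when expanded is $\frac{(p-\gamma)^2}{4}\beta^{p-\gamma-2}|D^2vDv|^2+\beta^{p-\gamma}|D^2v|^2+(p-\gamma)\beta^{p-\gamma-2}\Delta_\infty v\,(D^2vDv\cdot Dv)$ up to lower-order cutoff terms; here one uses $\Delta v=-(p-2)\beta^{-2}\Delta_\infty v$ to eliminate all occurrences of $\Delta v$ and then invokes \eqref{keyineq} precisely to control the residual cross term $\Delta v\,\Delta_\infty v$ by the "good" quantity $|D^2v|^2|Dv|^2-|D^2vDv|^2$. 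This is where the two competing bounds in $\gamma_{n,p}=\min\{p+\frac{n}{n-1},\,3+\frac{p-1}{n-1}\}$ arise: one branch comes from dominating the $\Delta_\infty v$ cross term by $|D^2vDv|^2$ alone (forcing $\gamma<3+\frac{p-1}{n-1}$), the other from absorbing it using the full anisotropic term via \eqref{keyineq} (forcing $\gamma<p+\frac{n}{n-1}$); choosing whichever is stronger for the given $(n,p)$ gives the claimed range.

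The main obstacle is precisely this algebraic positivity of the Hessian quadratic form. After eliminating $\Delta v$ the form is a quadratic in the vector $D^2vDv$ and the "traceless" part of $D^2v$, with coefficients that are linear in $(p,\gamma)$ and in which $\Delta_\infty v/\beta^2 = (D^2vDv\cdot Dv)/\beta^2$ appears with a sign that can be either favorable or unfavorable depending on the geometry; the role of \eqref{keyineq} is to convert the indefinite term $\Delta v\Delta_\infty v$ into $\frac12(|D^2v|^2-(\Delta v)^2)|Dv|^2$ up to an error controlled by $\frac{n-2}{2}(|D^2v|^2|Dv|^2-|D^2vDv|^2)$, and the arithmetic of matching these against the target form is what pins down the exponent threshold. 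Once this inequality is in hand with a constant $C(n,p,\gamma)>0$, one obtains the Caccioppoli estimate for $v=u^\varepsilon$, sends $\varepsilon\to0$ using weak lower semicontinuity of the $L^2$ norm of $D[\beta_\varepsilon^{(p-\gamma)/2}Du^\varepsilon]$ against the convergence $|Du^\varepsilon|^{(p-\gamma)/2}Du^\varepsilon\to|Du|^{(p-\gamma)/2}Du$ in $L^2_{\mathrm{loc}}$ (which follows from the uniform bound on the right-hand side and dominated convergence), and concludes $|Du|^{(p-\gamma)/2}Du\in W^{1,2}_{\mathrm{loc}}$ with the stated constant. A minor additional point is that for viscosity solutions one first invokes the known equivalence with weak solutions for the $p$-Laplacian, so no separate argument is needed there.
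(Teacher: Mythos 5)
Your overall scheme — regularize with \eqref{ap-plap}, derive a weighted Caccioppoli identity for the Hessian, invoke \eqref{keyineq} to recover positivity once $\gamma>2$, and pass to the limit — is the right conceptual framework, and in fact the routes are closer than they look: once you differentiate the regularized equation, test with the weighted cutoff, integrate by parts, and eliminate $\Delta_\infty u^\varepsilon$ via the non-divergence form of \eqref{ap-plap}, the resulting integral identity collapses to essentially the same one the paper derives in Lemma~\ref{divp-gz} directly from the null-Lagrangian identity $|D^2v|^2-(\Delta v)^2=\mathrm{div}(D^2vDv-\Delta v\,Dv)$. (Minor slip: to match the weight $\beta^{p-\gamma}$ on $|D^2v|^2$ appearing in $|D[\beta^{(p-\gamma)/2}Dv]|^2$, the multiplier should be $\beta^{2-\gamma}\partial_k v\,\eta^2$, so that $\beta^{p-2}\cdot\beta^{2-\gamma}=\beta^{p-\gamma}$, not $\beta^{p-\gamma-2}\partial_k v\,\eta^2$.)

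The genuine gap is at the core step, the one that actually determines $\gamma_{n,p}$: you assert but never verify that the Hessian form is coercive and that \eqref{keyineq} supplies the missing positivity. In the paper this is Lemma~\ref{l3.x5}, which multiplies the \emph{pointwise} consequence \eqref{alden1} of \eqref{keyineq} (applied to $u^\varepsilon$ and combined with the equation) by $\beta^{p-\gamma}\phi^2$, integrates, and adds a suitable multiple of the Lemma~\ref{divp-gz} identity; both constraints $\gamma<p+\tfrac{n}{n-1}$ and $\gamma<3+\tfrac{p-1}{n-1}$ then arise \emph{simultaneously} from demanding that the two resulting coefficients — on $\int |D^2u^\varepsilon Du^\varepsilon|^2\beta^{p-\gamma-2}\phi^2$ and on $\int(\Delta u^\varepsilon)^2\beta^{p-\gamma}\phi^2$ — both be strictly positive. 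Your description of the mechanism (``one branch dominates the $\Delta_\infty v$ cross term by $|D^2vDv|^2$ alone; the other absorbs it via the full anisotropic term'') does not match this bookkeeping, and since that arithmetic is the entire content of the threshold $\gamma_{n,p}$, the proposal is not a proof without it. A further subtlety you would need to handle, and the paper addresses explicitly, is the interpretation of $(\Delta u^\varepsilon)^2/|Du^\varepsilon|^\alpha$ and $D|Du^\varepsilon|$ at critical points of $u^\varepsilon$ before the pointwise inequality \eqref{alden1} can be used a.e.
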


Theorem~\ref{thm1} improves the earlier result by Bojarski and Iwaniec  \cite{bi87}, where the convexity and the monotonicity of the $p$-Laplacian were heavily used  in their proof. See Section~\ref{plaplace} for more explanations. As a byproduct,  we reprove the  following  higher integrability of $D^2u$, which was shown earlier by using the Cordes condition (see \cite{ar,mps,mw88}).
 \begin{cor}\label{thm2}
Let $n\ge2$ and $p\in(1,2)\cup(2,3+\frac{2}{n-2})$.
There exists $\dz_{n,p}\in(0,1)$ such that
for any weak/viscosity solution $u$ to \eqref{plap}, we have
   $ D^2u\in L^{q}_\loc(\Omega)$  for  any $q<2+\dz_{n,p}$ and
 \begin{equation}\label{W2q-plap}
 \left(\bint_B    |D^2u |^{q}  \,dx\right)^{1/q}\le C(n,p,q ) \frac1{r}\left(\bint_{2B}
  |Du|^{2}  \,dx\right)^{1/2}\quad\forall B=B(z,r)\Subset 2B\Subset \Omega.
  \end{equation}
  Moreover, 
\begin{align}\label{zz1}
 {\rm div}(D^2uDu-\Delta u Du)=|D^2u|^2-(\Delta u)^2\ge \frac{(p-1)[n-(n-2)(p-2)]}{(p-1)^2+n-1}|D^2u|^2 \quad\mbox{a.e. in $\Omega$.}
\end{align}
  \end{cor}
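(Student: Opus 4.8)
\emph{Proof proposal.} I would start from Theorem~\ref{thm1} itself. In the stated range one has $p<\gz_{n,p}$, so $\gz=p$ is admissible; since $|Du|^{(p-\gz)/2}Du=Du$ when $\gz=p$, Theorem~\ref{thm1} already yields $u\in W^{2,2}_\loc(\Omega)$ with the base estimate $\int_B|D^2u|^2\,dx\le Cr^{-2}\int_{2B}|Du|^2\,dx$ for all $B\Subset2B\Subset\Omega$. What remains is (a) the pointwise sharpening \eqref{zz1} of this bound, and (b) its self-improvement to an exponent $q>2$.

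For (a): the divergence identity ${\rm div}(D^2uDu-\Delta u\,Du)=|D^2u|^2-(\Delta u)^2$ holds pointwise for $C^\infty$ functions and, since $u\in W^{2,2}_\loc(\Omega)$, extends to $u$ in the distributional sense by mollification (the mollifications converge to $u$ strongly in $W^{2,2}_\loc$, which lets both sides pass to the limit); crucially, the same identity holds with $Du$ replaced by $Du-a$ for any constant vector $a$, because ${\rm div}(D^2u\,a)={\rm div}(\Delta u\,a)=D(\Delta u)\cdot a$. For the inequality I use that $u$ is $C^\infty$ on $\{Du\ne0\}$, so \eqref{keyineq} applies there; inserting $\Delta_pu=0$ in the form $\Delta_\fz u=-\tfrac1{p-2}|Du|^2\Delta u$ into \eqref{keyineq}, keeping the side that (after rearranging) bounds $[|D^2u|^2-(\Delta u)^2]|Du|^2$ from below, gives, pointwise on $\{Du\ne0\}$,
\begin{equation*}
[|D^2u|^2-(\Delta u)^2]|Du|^2 \ge n\,|D^2uDu|^2+\tfrac{2}{p-2}|Du|^2(\Delta u)^2-(n-2)|D^2u|^2|Du|^2.
\end{equation*}
Now $|D^2uDu|^2\ge|Du|^{-2}(\Delta_\fz u)^2=\tfrac{1}{(p-2)^2}|Du|^2(\Delta u)^2$ by Cauchy--Schwarz and the equation, and the coefficient $n$ is positive, so dividing by $|Du|^2$ and collecting the $(\Delta u)^2$-terms yields $(n-1)|D^2u|^2\ge\big(1+\tfrac{n}{(p-2)^2}+\tfrac{2}{p-2}\big)(\Delta u)^2=\tfrac{(p-1)^2+n-1}{(p-2)^2}(\Delta u)^2$; hence $(\Delta u)^2\le\tfrac{(n-1)(p-2)^2}{(p-1)^2+n-1}|D^2u|^2$, and the factorization $1-(p-2)^2=(3-p)(p-1)$ turns this into exactly \eqref{zz1}. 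On $\{Du=0\}$ one has $D^2u=0$ a.e.\ (the Sobolev gradient of $Du$ vanishes a.e.\ on $\{Du=0\}$), so \eqref{zz1} is trivial there. In particular the constant in \eqref{zz1} is $>0$ precisely when $n-(n-2)(p-2)>0$, i.e.\ when $p\in(1,2)\cup(2,3+\tfrac2{n-2})$; write it as $2c_{n,p}>0$.

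For (b): fix $B=B(z,r)\Subset2B\Subset\Omega$, set $a:=\bint_{2B}Du\,dx$, pick $\eta\in C_c^\infty(2B)$ with $\eta\equiv1$ on $B$ and $|D\eta|\le C/r$, and test the $(Du-a)$-form of the identity in \eqref{zz1} against $\eta^2$. Integration by parts, then Cauchy--Schwarz together with $|D^2u|^2-(\Delta u)^2\ge2c_{n,p}|D^2u|^2$ and $|\Delta u|\le\sqrt n\,|D^2u|$, and an absorption give the improved Caccioppoli bound $\int_B|D^2u|^2\,dx\le C(n,p)\,r^{-2}\int_{2B}|Du-a|^2\,dx$. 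Combining it with the Sobolev--Poincar\'e inequality $\big(\bint_{2B}|Du-a|^2\big)^{1/2}\le Cr\big(\bint_{2B}|D^2u|^{2_\ast}\big)^{1/(2_\ast)}$, where $2_\ast=\tfrac{2n}{n+2}<2$, produces a reverse H\"older inequality $\bint_B|D^2u|^2\,dx\le C\big(\bint_{2B}|D^2u|^{2_\ast}\,dx\big)^{2/(2_\ast)}$, valid on all such balls. Gehring's lemma then supplies $\dz_{n,p}\in(0,1)$ such that $D^2u\in L^q_\loc(\Omega)$ for all $q<2+\dz_{n,p}$ with $\big(\bint_B|D^2u|^q\big)^{1/q}\le C\big(\bint_{\frac32B}|D^2u|^2\big)^{1/2}$; feeding in the base $L^2$-estimate (in its flexible form, on concentric balls of comparable radii) gives \eqref{W2q-plap}.

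The algebra in (a) and the Caccioppoli--Sobolev--Poincar\'e--Gehring chain in (b) are routine once \eqref{keyineq} is in hand; I expect the only genuinely delicate point to be the a.e./distributional bookkeeping in (a)---making sure $D^2u$ exists and \eqref{zz1} holds for a merely weak/viscosity solution---which is handled by the $W^{2,2}_\loc$-regularity coming from Theorem~\ref{thm1} together with the mollification argument. (For $n=2$ the term $\tfrac{n-2}2[\,\cdot\,]$ in \eqref{keyineq} vanishes and the computation collapses to the planar identity \eqref{keyiden}.)
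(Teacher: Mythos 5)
Your proposal is correct, and it reaches the same conclusions via a slightly different organization than the paper. The paper does not invoke Theorem~\ref{thm1} for Corollary~\ref{thm2}; instead it works throughout with the smooth approximations $u^\ez$ solving \eqref{ap-plap}, derives \eqref{pi-plapx} for $u^\ez$, pairs that with Lemma~\ref{div} to get a Caccioppoli estimate with the $\inf_{\vec c}$ built in, passes to the limit, and then runs Sobolev--Poincar\'e and Gehring; the inequality \eqref{zz1} is obtained afterwards by letting $\ez\to 0$ in \eqref{pi-plapx} and identifying the distributional divergence via mollification. Your approach instead takes $\gz=p$ in Theorem~\ref{thm1} (indeed admissible precisely when $p<3+\tfrac2{n-2}$) to get $u\in W^{2,2}_\loc$ first, and then establishes the pointwise inequality in \eqref{zz1} directly on $u$: on $\{Du\ne0\}$ you exploit that $p$-harmonic functions are $C^\infty$ there, insert $\Delta_\fz u=-\tfrac1{p-2}|Du|^2\Delta u$ into \eqref{keyineq}, and the algebra you sketch does reduce to $(p-1)^2+n-1-(n-1)(p-2)^2=(p-1)[n-(n-2)(p-2)]$, giving the stated constant; on $\{Du=0\}$ you use that $D^2u=0$ a.e.\ there (Sobolev derivatives vanish a.e.\ on level sets). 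The $\inf_{\vec c}$-Caccioppoli you then redeive from \eqref{zz1} plus the $(Du-a)$-form of the divergence identity, and the Sobolev--Poincar\'e/Gehring chain is identical to the paper's. Two small remarks: your parenthetical ``the factorization $1-(p-2)^2=(3-p)(p-1)$ turns this into exactly \eqref{zz1}'' does not by itself produce the constant (the clean computation is the polynomial identity above), though the end result is right; and the mollification argument for the distributional identity ${\rm div}(D^2uDu-\Delta uDu)=|D^2u|^2-(\Delta u)^2$ does indeed go through under $u\in W^{2,2}_\loc$ with $Du$ continuous, as you say---the paper's invocation of $W^{2,q}_\loc$ with $q>2$ is not actually needed for that step. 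The trade-off is that your route leans on two standard but external facts (interior smoothness of $p$-harmonic functions where the gradient is nonzero, and vanishing of weak derivatives on level sets), whereas the paper's $\ez$-regularization keeps everything globally smooth and avoids both.
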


Similar results also hold  in the parabolic case; some of them were completely open problems. Write $Q_r(z,s):= (s-r^2,s)\times B(z,r)$.

\begin{thm}\label{thm3}
Let $n\ge2$ and $p\in(1,2)\cup(2,3+\frac{2}{n-2}) $.
There exists $\dz_{n,p}\in(0,1)$ such that
for any viscosity solution $u=u(x,t)$ to
\begin{equation}\label{pn-plap}
u_t-\Delta^N _pu =0  \quad {\rm in}\ \Omega_T:=\Omega \times (0,T),
\end{equation}
we have
   $ u_t,  D^2u\in L^{q}_\loc(\Omega)$ for any $q<2+\dz_{n,p}$, and
\begin{equation}\label{W2q-pn-plap}\left(\bint_{Q_r}   [|u_t|^{q}+   |D^2u |^{q}]  \,dx\right)^{1/q}\le C(n,p,q ) \frac1{r }\left(\bint_{Q_{2r}}
  |Du|^{2}  \,dx\right)^{1/2}\quad\forall Q_r=Q_r(z,s){ \subset Q_{2r}}\Subset \Omega_T.
  \end{equation}
  \end{thm}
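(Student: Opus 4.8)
The plan is to mimic the proof of Theorem~\ref{thm1} and Corollary~\ref{thm2}, the only genuinely new feature being the term generated by $\pa_t$, which will be absorbed by an additional testing of the equation against $u_t$.

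\textbf{Step 1: regularization.} For $\ez>0$ I would replace \eqref{pn-plap} by the smooth, uniformly parabolic equation
$$u^\ez_t={\rm tr}\lf[\lf(I+(p-2)\,\frac{Du^\ez\otimes Du^\ez}{\ez^2+|Du^\ez|^2}\r)D^2u^\ez\r]\quad\mbox{in }\Omega_T,$$
with $u^\ez$ carrying the (continuous) boundary data of $u$ on a fixed subcylinder $Q_{2r}\Subset\Omega_T$. Its ellipticity constants are $\min\{1,p-1\}$ and $\max\{1,p-1\}$, uniformly in $\ez$, so by the interior regularity theory for equations of normalized $p$-Laplacian type together with parabolic Schauder estimates each $u^\ez$ is interior smooth, the family $\{u^\ez\}$ is locally bounded and locally Lipschitz uniformly in $\ez$, and $u^\ez\to u$ locally uniformly by stability of viscosity solutions. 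It then suffices to establish \eqref{W2q-pn-plap} for the smooth $u^\ez$ with $\ez$-independent constants and let $\ez\to0$, using weak lower semicontinuity of the left-hand side, the uniform bound on $\bint_{Q_{2r}}|Du^\ez|^2$, and the a.e.\ identity $u_t=\Delta u+(p-2)\Delta^N_\fz u$ recovered in the limit.

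\textbf{Step 2: a Caccioppoli inequality for smooth solutions (the main point).} Write $v=u^\ez$ and $\sigma=\Delta^N_\fz v$, so $v_t=\Delta v+(p-2)\sigma$ on $\{Dv\ne0\}$. Combining the elementary identity ${\rm div}(D^2vDv-\Delta vDv)=|D^2v|^2-(\Delta v)^2$ with ${\rm div}(v_tDv)=\tfrac12\pa_t|Dv|^2+v_t\Delta v$ and $\Delta v-v_t=-(p-2)\sigma$ yields
$${\rm div}\lf(D^2vDv-\Delta vDv+v_tDv\r)-\tfrac12\pa_t|Dv|^2=|D^2v|^2+(p-2)\,\Delta v\,\Delta^N_\fz v.$$
Dividing \eqref{keyineq} by $|Dv|^2$ bounds $\Delta v\,\Delta^N_\fz v$ in terms of $|D^2v|^2$, $|D^2vDv|^2|Dv|^{-2}$ and $(\Delta v)^2$; substituting $\Delta v=v_t-(p-2)\sigma$, using $|\sigma|^2\le|D^2vDv|^2|Dv|^{-2}\le|D^2v|^2$, and keeping the term $|D^2vDv|^2|Dv|^{-2}$ (exactly what survives when one differentiates the weighted gradient $|Dv|^{\frac{p-\gz}2}Dv$, as in the proof of Theorem~\ref{thm1}), the same algebra that underlies \eqref{zz1} gives
$$|D^2v|^2+(p-2)\,\Delta v\,\Delta^N_\fz v\ \ge\ c(n,p)\,|D^2v|^2-C(n,p)\,|v_t|^2,$$
with $c(n,p)>0$ \emph{precisely when} $p\in(1,2)\cup(2,3+\tfrac2{n-2})$ --- this is where the range of $p$ is used. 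Integrating the divergence identity against $\eta^2$, $\eta$ a space--time cutoff supported in $Q_{2r}$, integrating by parts in $x$ and $t$, inserting this lower bound and absorbing the $|D^2v|^2$ arising from the $D^2vDv\cdot D(\eta^2)$ term gives
$$c(n,p)\int|D^2v|^2\eta^2\,dz\ \le\ \frac{C(n,p)}{r^2}\int_{Q_{2r}}|Dv|^2\,dz+C(n,p)\int|v_t|^2\eta^2\,dz.$$
Separately, testing \eqref{pn-plap} for $v$ against $v_t\eta^2$ and using the pointwise bound $|v_t|\le(\sqrt n+|p-2|)|D^2v|$ produces $\int|v_t|^2\eta^2\le\tfrac{C}{r^2}\int_{Q_{2r}}|Dv|^2+C(n,p)(p-2)^2\int|D^2v|^2\eta^2$. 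A bookkeeping of these two estimates --- their combination being admissible exactly in the above range of $p$ --- absorbs the cross terms and yields the parabolic Caccioppoli inequality
$$\int_{Q_r}\big(|D^2v|^2+|v_t|^2\big)\,dz\ \le\ \frac{C(n,p)}{r^2}\int_{Q_{2r}}|Dv|^2\,dz.$$

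\textbf{Step 3: self-improvement and conclusion.} Since a constant vector may be subtracted from $Dv$ in the differentiated equation without altering the ellipticity, the Caccioppoli inequality holds with $|Dv|^2$ replaced by $|Dv-\xi_0|^2$ for every $\xi_0\in\rn$; taking $\xi_0$ to be the (time-frozen) average of $Dv$ over $B_{2r}$ and using a parabolic Sobolev--Poincar\'e inequality --- available because $D_kv$ solves a differentiated parabolic equation --- turns this into a reverse H\"older inequality for $|D^2v|^2+|v_t|^2$ over parabolic cylinders, so Gehring's lemma in the parabolic setting improves the exponent from $2$ to some $q=2+\dz_{n,p}>2$ and gives \eqref{W2q-pn-plap} for the smooth $u^\ez$ uniformly in $\ez$. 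Passing to the limit $\ez\to0$ completes the proof for viscosity solutions, and the $u_t$-part of \eqref{W2q-pn-plap} follows at once from $|u_t|\le(\sqrt n+|p-2|)|D^2u|$, valid a.e.\ once $D^2u\in L^q_\loc$. I expect the main obstacle to be Step~2: tracking how the fundamental inequality \eqref{keyineq} turns the $\pa_t$-term into a controllable multiple of $|v_t|^2$ and then closing the coupled energy inequalities for $D^2v$ and $v_t$ with all constants staying admissible throughout $p\in(1,2)\cup(2,3+\tfrac2{n-2})$; a secondary issue is running the Gehring iteration correctly in the anisotropic parabolic geometry.
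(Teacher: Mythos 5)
Your overall architecture (regularize, apply the fundamental inequality \eqref{keyineq} pointwise, integrate against a cutoff to get a Caccioppoli bound for $|D^2v|^2+|v_t|^2$, then run a parabolic Gehring argument) matches the paper's. However, Step~2 has a genuine gap, and it sits precisely where the paper's Section~4 does its real work.

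\textbf{The approximation error is ignored.} When you regularize, the equation reads $v_t=\Delta v+(p-2)\frac{\Delta_\fz v}{|Dv|^2+\ez}$, not $v_t=\Delta v+(p-2)\frac{\Delta_\fz v}{|Dv|^2}$. Dividing \eqref{keyineq} by $|Dv|^2$ and then substituting the regularized equation does not eliminate the mismatch in the denominators; you are left with an extra term of the form $-\frac{\ez}{2}\frac{|D^2v|^2-(\Delta v)^2}{|Dv|^2+\ez}$. The paper calls this the "annoying term" and explicitly states that it "cannot be removed." Dealing with it is not cosmetic: its integral is controlled (Lemma~4.4 of the paper) by $\frac{1}{4(p-1)}\int(v_t)^2\phi^2$ plus small and $o(1)$ terms, and the resulting $\frac{1}{4(p-1)}$ must be absorbed into the positive $(v_t)^2$-coefficient $\frac{n}{2(p-2)^2}$ that the pointwise algebra produces. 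The condition $\frac{n}{2(p-2)^2}>\frac{1}{4(p-1)}$ only covers $p\in\big(1+1/(n+1+\sqrt{n(n+2)}),\,n+2+\sqrt{n(n+2)}\big)$, which for $n=2$ caps out around $p\approx 6.83$ rather than $p<\infty$. Your Step~2 never sees this constraint; the claim that ``bookkeeping'' closes the argument in the entire range $p\in(1,2)\cup(2,3+\tfrac2{n-2})$ is not justified and, as written, would fail for $n=2$ and large $p$.

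\textbf{The missing second case.} Because of the above constraint the paper has to split its proof of the Caccioppoli inequality into two cases: $1<p<\min\{6,3+\frac2{n-2}\}$ (handled as you roughly describe, plus a dimension-padding trick to push the lower endpoint to $1$) and $n=2$, $p\ge6$, which requires a genuinely different treatment involving integration by parts in time that produces logarithmic integrands $\ln[|Dv|^2+\ez]$ (Lemmas~4.6--4.9). Your proposal has no analogue of this second case and no substitute idea. The pointwise inequality you state — $|D^2v|^2+(p-2)\Delta v\,\Delta^N_\fz v\ge c(n,p)|D^2v|^2-C(n,p)|v_t|^2$ — is weaker than what the correct algebra actually yields (the $(v_t)^2$ should appear with a \emph{positive} coefficient $\frac{n}{2(p-2)^2}$ on the left, which is exactly what you need to absorb the $\ez$-error), and the recombination with your separate test against $v_t\eta^2$ would re-introduce a constant of size $C(p-2)^2$ that you never verify can be absorbed. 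In short, the two ``annoying'' terms flagged in the paper's introduction — the $\Delta v\,v_t$ cross term and the $\ez$-error term — are where all the work is, and your Step~2 dispatches them in one sentence.
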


 \begin{rem}\rm
When $n=1$, any solution to \eqref{plap} must be linear and any solution to \eqref{pn-plap} must satisfies the heat equation. Therefore, Theorem \ref{thm1}, Corollary \ref{thm2}, and Theorem \ref{thm3} still hold in the 1D case.
\end{rem}

\begin{thm}\label{thm4}
 Let $n\ge 1$.  For any weak/viscosity solution $u=u(x,t)$
 to \begin{equation}\label{p-plap}
u_t-\Delta _pu =0  \quad {\rm in}\ \Omega_T,
\end{equation}
 the following results  hold.

 \begin{enumerate}
 \item[(i)] For $p\in(1,2)\cup(2,\fz)$, we have $u_t\in L^2_\loc (\Omega_T)$
 and, for any
  $Q_r=Q_r(z,s){ \subset Q_{2r}}\Subset \Omega_T$,
   \begin{align*}
\int_{Q_r}(u_t)^2\,dx\,dt\le
\frac{C}{r^2}\int_{{Q_{2r}}}  |Du|^{p}\,dx\,dt+
\frac{C}{r^2}\int_{{Q_{2r}}} |Du| ^{{2p-2}}\,dx\,dt.
\end{align*}

 \item[(ii)] For $p\in(1,2)\cup(2,3)$, we have
    $D^2u\in L^2_\loc (\Omega_T)$ and, for any
  $Q_r=Q_r(z,s){\subset Q_{2r}}\Subset \Omega_T$,
  \begin{align*}
\int_{Q_r}|D^2 u|^2\,dx\,dt
 &\le   C(n,p)\frac1{r^2}\int_{ Q_{2r} } |Du|^2 \,dx\,dt
+C(n,p)\frac1{r^2}\int_{  Q_{2r} }|Du|^{{4-p}} \,dx\,dt .
\end{align*}
 The range of $p$ (including $p=2$ from the classical result) here is sharp for the $W_\loc^{2,2}$-regularity.
  \end{enumerate}
\end{thm}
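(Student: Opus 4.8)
The plan is to treat (i) and (ii) together by vanishing viscosity. For $\mu\in(0,1)$ let $u^\mu$ be the smooth solution of $\pa_tu^\mu-{\rm div}(A_\mu(Du^\mu))=0$ on a cylinder slightly smaller than $\Omega_T$ with $u^\mu=u$ on the parabolic boundary, where $A_\mu(\xi):=(\mu^2+|\xi|^2)^{(p-2)/2}\xi$, so that $DA_\mu(\xi)=(\mu^2+|\xi|^2)^{(p-2)/2}\bigl(I+(p-2)(\mu^2+|\xi|^2)^{-1}\xi\otimes\xi\bigr)$. By the interior gradient (and $C^{1,\az}$) estimates of DiBenedetto, $\{u^\mu\}$ is bounded in $C^{1,\az}_\loc$ uniformly in $\mu$ and $u^\mu\to u$ locally uniformly with $Du^\mu\to Du$ in $L^q_\loc$ for all $q<\fz$; combined with the known equivalence of weak and viscosity solutions for \eqref{p-plap}, this reduces both parts to proving the two displayed estimates for $u^\mu$ with constants independent of $\mu$ and then letting $\mu\to0$ (using weak lower semicontinuity on the left and convergence of $Du^\mu$ on the right; for $p<2$ the local boundedness of $Du$ guarantees finiteness of the right-hand sides). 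The case $n=1$, in which \eqref{p-plap} reads $\pa_tu=(p-1)|u_x|^{p-2}u_{xx}$, is included with no change.

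\textbf{Part (i).} Fix a parabolic cutoff $\zeta$ with $\zeta=1$ on $Q_r$, $\zeta$ supported in $Q_{2r}$, $\zeta$ vanishing at the bottom of $Q_{2r}$, $|D\zeta|\le C/r$ and $|\pa_t\zeta|\le C/r^2$. Test $\pa_tu^\mu-{\rm div}(A_\mu(Du^\mu))=0$ against $\pa_tu^\mu\,\zeta^2$; with $G_\mu(\xi):=\int_0^{|\xi|}(\mu^2+s^2)^{(p-2)/2}s\,ds$ the flux term equals $\int_{Q_{2r}}\pa_t[G_\mu(Du^\mu)]\zeta^2$, which after integration by parts in $t$ yields a favorably signed time-slice term (discarded) and $-\int_{Q_{2r}}G_\mu(Du^\mu)\pa_t\zeta^2\ls r^{-2}\int_{Q_{2r}}|Du^\mu|^p$, while the $D\zeta$-term is controlled by Young's inequality by $\tfrac12\int_{Q_{2r}}(\pa_tu^\mu)^2\zeta^2+Cr^{-2}\int_{Q_{2r}}(\mu^{2p-2}+|Du^\mu|^{2p-2})$. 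Absorbing $\tfrac12\int(\pa_tu^\mu)^2\zeta^2$ and sending $\mu\to0$ gives (i); this argument uses nothing about $p$ beyond $p\in(1,2)\cup(2,\fz)$.

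\textbf{Part (ii).} Differentiate the regularized equation in $x_k$, so $\pa_t\pa_ku^\mu={\rm div}(DA_\mu(Du^\mu)D\pa_ku^\mu)$, and test against $\pa_ku^\mu\,(\mu^2+|Du^\mu|^2)^{(2-p)/2}\zeta^2$, summing over $k$. On the left, $\tfrac12\pa_t|Du^\mu|^2\,(\mu^2+|Du^\mu|^2)^{(2-p)/2}=\tfrac1{4-p}\pa_t[(\mu^2+|Du^\mu|^2)^{(4-p)/2}]$, and integration by parts in $t$ produces a time-slice term with a favorable sign (because $4-p>0$) plus a cutoff term $\ls r^{-2}\int_{Q_{2r}}(\mu^{4-p}+|Du^\mu|^{4-p})$, which is the source of the $|Du|^{4-p}$ term. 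On the right, using the contractions $\sum_k\pa_{ik}u\,\pa_ku=(D^2uDu)_i$ and $\sum_kDu\cdot D\pa_ku\,\pa_ku=\Delta_\fz u$, the factor $(\mu^2+|Du^\mu|^2)^{(2-p)/2}$ cancels the weight in $DA_\mu$, and the two $(\mu^2+|Du^\mu|^2)^{-1}|D^2u^\mu Du^\mu|^2$ contributions cancel against each other; one is left (up to the favorably signed and cutoff terms already accounted for) with
\[ \int_{Q_{2r}}|D^2u^\mu|^2\zeta^2 \le (p-2)^2\int_{Q_{2r}}(\mu^2+|Du^\mu|^2)^{-2}(\Delta_\fz u^\mu)^2\zeta^2 + \frac{C}{r^2}\int_{Q_{2r}}\bigl(|Du^\mu|^2+|Du^\mu|^{4-p}+\mu^{4-p}\bigr) + \dz\int_{Q_{2r}}|D^2u^\mu|^2\zeta^2. \]
Since $(\Delta_\fz u^\mu)^2=(D^2u^\mu Du^\mu\cdot Du^\mu)^2\le|D^2u^\mu|^2|Du^\mu|^4\le|D^2u^\mu|^2(\mu^2+|Du^\mu|^2)^2$, the first term on the right is $\le(p-2)^2\int_{Q_{2r}}|D^2u^\mu|^2\zeta^2$; choosing $\dz$ small and $(p-2)^2<1$, i.e.\ $p\in(1,2)\cup(2,3)$, permits absorption, and $\mu\to0$ yields the estimate in (ii).

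\textbf{Sharpness and main obstacle.} For $p\ge3$ and any $n\ge1$, take $u(x,t):=t^{1/(2-p)}f(x_1)$ with $f(0)=1$, $f'(0)=0$ and $f''=\tfrac1{(p-1)(2-p)}|f'|^{2-p}f$; then $u$ solves \eqref{p-plap} on $(-1,1)^n\times(1,2)$, near $x_1=0$ one has $f'(x_1)\sim|x_1|^{1/(p-1)}$ and hence $f''(x_1)\sim|x_1|^{(2-p)/(p-1)}$, so $\int|f''|^2\asymp\int_0^1 x_1^{-2(p-2)/(p-1)}\,dx_1=\fz$ exactly when $p\ge3$; thus $D^2u\notin L^2_\loc$, showing the range in (ii) (with $p=2$ the heat equation) is sharp for the $W^{2,2}_\loc$ regularity. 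The main obstacle is the bookkeeping in part (ii): one must track all the terms produced by the non-standard test function $\pa_ku^\mu(\mu^2+|Du^\mu|^2)^{(2-p)/2}\zeta^2$ together with the parabolic cutoff, verify the cancellation of the $|D^2u^\mu Du^\mu|^2$ contributions and the favorable sign of the time-slice boundary terms (which needs $p<4$), and thereby isolate the self-improving inequality with constant precisely $(p-2)^2$; a secondary point is the rigorous passage $u^\mu\to u$, which rests on a uniqueness/stability argument for \eqref{p-plap} and on DiBenedetto's uniform $C^{1,\az}_\loc$ bounds.
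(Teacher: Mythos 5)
Your proof of part (i) is essentially the paper's argument (Lemma 5.1): test the regularized equation against $\pa_t u^\mu\,\zeta^2$, write the flux term as $\pa_t G_\mu(Du^\mu)$, integrate by parts in $t$, and absorb the Young term; nothing substantively different.

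For part (ii) you take a genuinely different route. The paper applies its fundamental algebraic inequality (Lemma~2.1) pointwise to $u^\ez$, obtains the differential inequality \eqref{pi-pn-plap-1} in which the coefficient of $|D^2u^\ez|^2$ is $\tfrac{n}{2(p-2)^2}-\tfrac n2$, and then spends Lemmas~5.3 and~5.4 (together with Lemma~2.3) controlling the parabolic and approximation error terms. You instead differentiate the regularized equation in $x_k$ and test against the weighted gradient $\pa_ku^\mu(\mu^2+|Du^\mu|^2)^{(2-p)/2}\zeta^2$ --- the classical Bojarski--Iwaniec/Uhlenbeck weighted Caccioppoli scheme. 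The mechanism is different: there is no appeal to Lemma~2.1 at all; what does the work is (a) the exact cancellation of the two $(p-2)(\mu^2+|Du^\mu|^2)^{-1}|D^2u^\mu Du^\mu|^2$ contributions, one coming from the $\xi\otimes\xi$ part of $DA_\mu$ and one from differentiating the weight, and (b) the crude Cauchy--Schwarz bound $(\Delta_\fz u^\mu)^2\le|D^2u^\mu|^2(\mu^2+|Du^\mu|^2)^2$, which turns the surviving cross term into $(p-2)^2\int|D^2u^\mu|^2\zeta^2$. I traced the algebra and it checks out; both approaches reduce to the same absorption condition $(p-2)^2<1$ and hence the same range $p\in(1,3)$, and your Caccioppoli estimate is actually slightly cleaner than \eqref{duez-1} (no $|\phi||D^2\phi|$ term, since Lemma~2.3 is not invoked). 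What the paper's fundamental-inequality route buys, which yours does not, is a unified framework that yields the \emph{better} ranges $p<3+\tfrac{2}{n-2}$ in Corollary~\ref{thm2} and Theorem~\ref{thm3} and the full $\gz$-range in Theorem~\ref{thm1}; your weighted-test-function argument, run in the elliptic case, would only give $p<3$ there too. For Theorem~\ref{thm4} itself, where $p<3$ is the sharp answer anyway, the two are equivalent in strength.

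One small gap: your sharpness example $u=t^{1/(2-p)}f(x_1)$ rests on the existence of a solution to the singular ODE $f''=\tfrac1{(p-1)(2-p)}|f'|^{2-p}f$ with $f(0)=1$, $f'(0)=0$, together with the asymptotics $f'\sim|x_1|^{1/(p-1)}$; since the right-hand side is singular at $f'=0$ for $p>2$, this existence step is not immediate and is left unproved. The paper's example $w=\tfrac{p^{p-1}}{(p-1)^{p-1}}t+|x_1|^{1+\frac1{p-1}}$ is fully explicit and avoids the ODE entirely; you may as well use that.
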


\begin{rem}\rm
 By keeping track of the constants, it is clear that the implicit constants $C$ in the all above results do not blow up as $p\to 2$.
\end{rem}

In the following subsections, we introduce the background and related results for all types of the equations considered  above in details, and give more remarks on our results.

\subsection{$p$-Laplace equation and its normalization}\label{plaplace}
To start with, we consider the $p$-Laplace equation \eqref{plap}.
A function $u:\Omega\to\rr$ is  $p$-harmonic  provided that $u\in W^{1,p}(\Omega)$  is a  weak solution   to  \eqref{plap},
 that is,
 $$
 \int_\Omega |Du|^{p-2}Du\cdot D\phi\,dx=0\quad\forall \phi\in C^\fz_c(\Omega).$$
Recall  that
 $p$-harmonic functions are identified with   viscosity solutions to \eqref{plap}  by Juuntinen-Lindqvist-Manfredi \cite{jlm01} (see also Julin-Juuntinen \cite{jj12}), and also identified  with
viscosity solutions to   $\Delta_p^Nu =0$  in $\Omega$ by Peres-Sheffield \cite{ps}.

Formally, we have
$$\Delta^N_p v=\Delta  v+(p-2)\frac{\Delta_\fz v}{|Dv|^2}\quad \mbox{and} \quad\Delta_p v=|Dv|^{p-2}[
\Delta v+(p-2)\frac{\Delta_\fz v}{|Dv|^2}].$$
Therefore, the normalized $p$-Laplace operator can be regarded as an ``interpolation'' between Laplacian and (normalized) $\infty$-Laplacian. This  was, indeed, rigorously interpreted by the theory of stochastic tug-of-war games;
see  \cite{ps} and also \cite{pssw}.

It was well-known that any $p$-harmonic function  belongs to $C^{1,\alpha}$
 for some $\alpha\in(0,1)$ depending on $n$ and $p$, but not necessarily $C^{1,1}$ when $p>2$; see Uraltseva \cite{u68}, Lewis \cite{l83}, Dibenedetto \cite{d82}, Evans \cite{e82} and also Uhlenbeck \cite{u77}.

Regarding Theorem~\ref{thm1}, let us recall that Bojarski-Iwaniec  \cite{bi87}  proved that $|Du|^{\frac{p-2}2}Du\in W^{1,2}_\loc $ for all $p$-harmonic functions $u$; see also  Uraltseva \cite{u68} when $p\in(2,\fz)$. In their proof,
certain convexity and the monotonicity of  the $p$-Laplace operator is heavily utilized. Hence, by   $|Du|\in L^\fz_\loc$, $|Du|^{\frac{p-\gz}2}Du\in W^{1,2}_\loc $ for all $\gz\le 2$.
In this paper,  however, without using any convexity or the monotonicity of  the $p$-Laplace operator but only with \eqref{keyineq}, we improve the range  $\gz\le 2$  to  $\gz<\gz_{n,p}$ in Theorem \ref{thm1}. In particular, the range is improved to $ \gz<p+2$ when $n=2$, which we conjecture to be optimal.
Note that when $n=2$,
$$
\gz_{n,p}=p+2= p+\frac{n}{n-1}= 3+\frac{p-1}{n-1},
$$
and when $n\ge3$, $$\gz_{n,p}=3+\frac{p-1}{n-1}<p+\frac{n}{n-1} \ \mbox{ if $p>2$},\quad\mbox{and}\quad
\gz_{n,p}=p+\frac{n}{n-1}<3+\frac{p-1}{n-1}\ \mbox{ if $p<2$}.$$

The $W^{2,q}_\loc$-regularity in Corollary \ref{thm2}  was  known via the so-called {\it Cordes condition}. The Cordes condition was introduced to study the summability of the second derivative
for second order linear operators in non-divergence form with measurable coefficients; see Cordes \cite{c61}, Talenti \cite{t65}, Campanato \cite{c67}  and also Maugeri-Palagachev-Softova \cite{mps}. We also refer the reader to Bers-Nirenberg \cite{bn54}, Caffarelli-Cabr\'e \cite{cc95}
and Lin \cite{l86} for general study.
Manfredi-Weitzman \cite{mw88} used the Cordes condition to study $p$-harmonic functions so
to get the $W^{2,2}_\loc$-regularity when $1<p<3+\frac2{n-2}$, and then one can get the $W^{2,q}_\loc$-regularity for some $q>2$  via the argument therein and \cite[Theorem 1.2.1\&1.2.3]{mps}. For a brief explanation of the  application of the Cordes condition, see  Remark \ref{plap-cor} (i) (see also \cite[Theorem 4.1]{ar}).
We also note that it is not enough to get Theorem \ref{thm1} and also \eqref{zz1} in Corollary \ref{thm2} via the Cordes condition; see Remark \ref{plap-cor} (ii).

 We remark  that when $n=2$ and $p\in(1,2)\cup(2,\fz)$, for any $p$-harmonic function $u$ in $\Omega$, \eqref{zz1} gives
$$|D^2u|^2\le -\frac{(p-1)^2+1}{p-1}\det D^2u\quad\mbox{a.e. in $\Omega$.}$$
This implies that the map $x\to Du(x)$ is  quasi-regular, which was originally obtained by Bojarski-Iwaniec \cite{bi87}. When $n\ge 3$ and $p\in(1,2)\cup (2,3+\frac2{n-2})$,  the nonnegativity of $|D^2u|^2-(\Delta u)^2$ given in  \eqref{zz1} is new.
When $n\ge 3$ and $p\in(3+\frac2{n-2},\fz)$,
we conjecture that $|D^2u|^2-(\Delta u)^2$ changes sign for some $p$-harmonic function $u\in W^{2,2}_\loc$. For more discussions see Remark 3.5.

Finally, we remark that, when $n=2$ and $p\in(1,\fz)$,
  via hodograph  method Iwaniec-Manfredi \cite{im89} showed the
   $ C^{k,\alpha}(\Omega)\cap W^{k,q}_\loc$-regularity of $p$-harmonic functions,
where ranges of $k,\alpha$  and $q$ are sharp. But when $n\ge3$ and $p\in[3+\frac2{n-2},\fz)$,
 it remains open to get $u\in W^{2,2}_\loc(\Omega)$, in other words,  to improve the range $\gz\in(-\fz,\gz_{n,p})$ in Theorem \ref{thm1} to $\gz\in(-\fz,p]$.

\subsection{Parabolic normalized $p$-Laplace equation}

The  parabolic  normalized $p$-Laplace equation \eqref{pn-plap} is closely related to the
theory of stochastic tug-of-war games, and has certain applications in   economics and image process, see e.g.\  Manfredi-Parviainen-Rossi\cite{mpr},  Does \cite{d11}, Nystr\"om-Parviainen \cite{np14},  and Elmoataz-Toutain-Tenbrinck \cite{ett}.

For any viscosity solution
 to   \eqref{pn-plap},
Does \cite{d11} and Banerjee-Garofalo \cite{bg,bg15} established their interior Lipschitz
regularity in  the spatial variables.
However, the interior Lipschitz
regularity in   the time variable is   open
unless certain assumptions are added on the  behavior at the lateral boundary.
Jin and Silvestre \cite{js} proved   the
 $C^{1,\alpha}$-regularity    in the spatial variables and the
 $C^{0,(1+\alpha)/2} $-regularity  in the time variable for some $\alpha\in(0,1)$.
 We also refer the reader  to  \cite{ijs,ap} for analogue results for
 general parabolic equations involving $\Delta^N_p$.

H{\o}eg and Lindqvist \cite{hl} established  the  $W^{2,2}_\loc$-regularity in the spatial variables for viscosity solutions  to \eqref{p-plap}
when $\frac65<p<\frac{14}5$ and the
$W^{1,2}_\loc$-regularity in the time variable  when $1<p<\frac{14}5$.
The limits $\frac65$ and $\frac{14}5$ are evidently an artifact of their method,
and their method is not capable to reach all $p\in(1,\fz)$.
They also explained that, through the Cordes condition  (see e.g.  \cite[(1.106)]{mps} for parabolic version),
it is possible to get analogue results for $p$ in some restricted range    but not all $p\in(1,\fz)$, mainly since
the absence of zero (lateral) boundary values produces many  undesired terms  which are hard  to estimates. Indeed,  by the parabolic version of the Cordes condition, the only possible range to get the $W^{2,2}_\loc$-regularity  is   $p\in(1,3+\frac{2}{n-1})$; see  Remark \ref{pn-plap-cor} for details.

Additionally, the  following question was raised by H{\o}eg and Lindqvist   \cite{hl}:

\noindent {\it
For all $p\in(1,\fz)$, whether viscosity solutions  to \eqref{pn-plap} enjoy  the $W^{2,2}_\loc$-regularity in the spatial variables and the $W^{1,2}_\loc$-regularity in the time variable?}

The higher integrability of second order derivative was also completely open.
Theorem 1.3 not only completely answers this questions when $n=2$, but also improves the result by H{\o}eg and Lindqvist \cite{hl} when $n\ge3$  by getting better range of $p$ with higher order integrability.

\subsection{Parabolic $p$-Laplace equation}

Finally, we focus on the parabolic $p$-Laplace equation \eqref{p-plap}.
For  the equivalence of the weak and  viscosity solutions to \eqref{p-plap}  we refer to \cite{jlm01,jj12}. Recall that the  $C^{1,\alpha}$-regularity for weak/viscosity solutions to \eqref{p-plap} was established by DiBenedetto-Friedman \cite{df85} (see also Wiegner \cite{w86}).
  The $L^2_\loc$-integrality of $u_t$ is easy to get from the divergence structure of the equation \eqref{p-plap}.
However,  to the best of our knowledge, so  far no second order regularity in the spatial variables has been  known in general. We note that the approach via the parabolic version of the Cordes condition  does not work here; see Remark \ref{p-plap-cor}.

The  range $p\in(1,3)$ in Theorem \ref{thm4} is optimal
to get the $W^{2,2}_\loc$-regularity. Indeed, the function
$$ w(x_1,x_2)=\frac{p^{p-1}}{(p-1)^{p-1}}t+|x_1|^{1+\frac{1}{p-1}}$$   is  a viscosity solution to \eqref{p-plap} in $\rr^2\times(0,\fz)$, but a direct calculation shows that $$\mbox{$|D^2w|=C|x_1|^{\frac {2-p}{p-1}}\in L^2_\loc(\rr^2\times(0,\fz))$ if and only  if $ p<3$.}$$

 \subsection{Ideas of the proofs}

The proofs of Theorem  \ref{thm1} and Corollary \ref{thm2} are given in Sections 3.
 Let $u$ be a  $p$-harmonic function  in $\Omega$. For any smooth domain $U\Subset\Omega$, we  consider the smooth approximation function $u^\ez$ with $\ez\in(0,1]$, which is the  solution to
\begin{equation}\label{ap-plap}
 {\rm div}\left([|D u^\ez|^2+\ez]^{\frac{p-2}2 } D u^\ez\right)=0\quad {\rm in}\
U ;\ u^\ez=u\quad {\rm on}\ \partial U.
\end{equation}
 Applying \eqref{keyineq} to $u^\ez$, in Section 3, via a direction calculation one has
 \begin{align}\label{pi-plap2}
\frac{n}{2}|D|Du^\ez||^2 +\frac{1}{p-2} \frac{(\Delta u^\ez)^2}{|Du^\ez|^2} [|Du^\ez|^2+\ez]
\le\frac{1}{2}[|D^2u^\ez|^2-(\Delta u^\ez)^2]+\frac{n-2}{2}|D^2u^\ez|^2
\quad\mbox{a.e. in $U$}
\end{align} and
 \begin{align}\label{pi-lpap}
&[\frac{n}{2(p-2)^2}+\frac{1}{p-2}-\frac{n-2}{2}] |D^2u^\ez|^2 \le[\frac{n}{2(p-2)^2}+\frac{1}{p-2}+\frac{1}{2}] [|D^2u^\ez|^2-(\Delta u^\ez)^2]\quad \mbox{in $U$.}
\end{align}

Since $1<p<3+\frac2{n-2}$ implies  $$\frac{n}{2(p-2)^2}+\frac{1}{p-2}-\frac{n-2}{2}>0,$$
from \eqref{pi-lpap} and   the divergence structure of $|D^2u^\ez|^2-(\Delta u^\ez)^2$ (see Lemma \ref{div}) we deduce
   $$ \bint_B    |D^2u^\ez |^2  \,dx \le C(n,p  ) \frac1{r^2}\inf_{{{\vec c}}\in\rn}\bint_{2B}
  |Du^\ez-{{\vec c}}|^{2}  \,dx \quad\forall B\subset 2B\Subset U.$$
  Sending $\ez\to 0$ and using the Sobolev-Poincar\'e inequality, by Gehring's lemma    we  obtain  \eqref{W2q-plap}. As a by-product, one   gets \eqref{zz1}.

To get Theorem \ref{thm1}, multiplying both sides of \eqref{pi-plap2} by $[|D u^\ez|^2+\ez]^{\frac{p-\gz}2}\phi^2$ for any $\phi\in C^\fz(\Omega)$ and integrating, if $\gz<\gz_{n,p}$, after some calculation we obtain
$$\frac{|D^2u^\ez  Du^\ez |^2}{|Du^\ez|^2+\ez} [|D u^\ez|^2+\ez]^{\frac{p-\gz}2}\in L^1_\loc(U)\quad
\mbox{and}\quad (\Delta u^\ez)^2[|D u^\ez|^2+\ez]^{\frac{p-\gz}2} \in L^1_\loc(U)$$
 uniformly in $\ez>0$. Further calculation  yields that $$|D[|D u^\ez|^2+\ez]^{\frac{p-\gz}4}Du^\ez|^2   \in L^1_\loc(U)\ \mbox{uniformly in $\ez>0$}.$$
Sending $\ez\to0$ one concludes $|D[|Du| ^{\frac{p-\gz}2}Du]|\in L^2_\loc$ as desired.

Theorem \ref{thm3} is proved in Section 4. Let $u=u(x,t)$ be a viscosity solution
to  \eqref{pn-plap}. Given any  smooth domain $U\Subset \Omega$, for $\ez\in(0,1]$
we let $u^\ez\in C^\fz(U) \cap C^0(\overline U)$  be a viscosity  solution to   the regularized   equation
\begin{equation}\label{ap-pn-plap}
 u^\ez_t- \Delta u^\ez-(p-2) \frac{\Delta _\fz u^\ez}{  |D u^\ez|^2+\ez }=0\quad {\rm in}\
U ;\ u^\ez=u\quad {\rm on}\ \partial_p U_T.
\end{equation}
Applying \eqref{keyineq} to $u^\ez$, one gets
 \begin{align*}
&\frac{n}{2}\frac{|D^2u^\ez Du^\ez|^2}{|Du^\ez|^2+\ez}+[\frac{1}{p-2}
-\frac{n-2}{2}](\Delta u^\ez)^2
\nonumber\\
&\quad\le\frac{n-1}{2}[|D^2u^\ez|^2-(\Delta u^\ez)^2]-\frac{\ez}{2}\frac{|D^2u^\ez|^2-(\Delta u^\ez)^2}{|Du^\ez|^2+\ez}+ \frac{\Delta u^\ez u^\ez_t}{p-2}\quad\mbox{in $U_T$}.
\end{align*}
Compared to \eqref{pi-plap2} or \eqref{pi-lpap} for approximation functions to $p$-harmonic functions, here we have the additional
 term $$\frac{\Delta u^\ez u^\ez_t}{p-2}$$  from the parabolic structure,  and also an
 annoying  term $$-\frac{\ez}{2}\frac{|D^2u^\ez|^2-(\Delta u^\ez)^2}{|Du^\ez|^2+\ez}$$  from the approximation procedure, either of which cannot be removed.
With additional ideas and careful/tedious calculations
  to bound such two terms (see Section 4.1 and Section 4.2
by considering two cases via different  methods), we are able to prove in Lemma \ref{du-cpll} that,
  if    $p\in(1,2)\cup(2, 3+\frac{2}{n-2})$, then
$$
 \int_{Q_r}[|D^2u^\ez|^2+|u^\ez_t|^2]\,dx\,dt
 \le C(n,p)\frac1{r^2}\inf_{{\vec c}\in\rn}\int_{ Q_{2r}}|Du^\ez-{{\vec c}}|^2 \,dx\,dt+   o(1)
\quad\forall Q_r\subset { Q_{2r}}\Subset U_T.
$$
From this, the parabolic Sobolev-Poincar\'e inequality and Gehring's Lemma, we conclude \eqref{W2q-pn-plap}.

Finally, we prove Theorem \ref{thm4} in Section 5. Let $u=u(x,t) $ be a viscosity solution to  \eqref{p-plap}.
Given any smooth domain $U\Subset \Omega$,   for  $\ez\in(0,1]$ we
let $u^\ez\in C^\fz(U)\cap C^0(\overline U)$  be a weak solution to   the regularized   equation
\begin{equation}\label{ap-par-plap}
 u^\ez_t-{\rm div}\left([|D u^\ez|^2+\ez]^{\frac{p-2}2 } D u^\ez\right)=0\quad {\rm in}\
U ;\ u^\ez=u\quad {\rm on}\ \partial_p U_T.
\end{equation}

To obtain Theorem \ref{thm4}, it suffices to show that $D^2u^\ez,\, u^\ez_t\in L^2_\loc(U_T)$ uniformly in $\ez>0$.  Note that, from the divergence structure of \eqref{ap-par-plap}, one  easily gets  $u^\ez_t\in L^2_\loc(U_T)$ uniformly in $\ez>0$.
To see   $D^2u^\ez \in L^2_\loc(U_T)$ uniformly in $\ez>0$,  we apply \eqref{keyineq} to $u^\ez$ so to  get
\begin{align*}
&[\frac{n}{2(p-2)^2}-\frac n2]|D^2u^\ez|^2   \nonumber  \\
&\quad\le[ \frac{n}{2(p-2)^2}-\frac 12][|D^2u^\ez|^2-(\Delta u^\ez)^2]\nonumber\\
&\quad\quad- \frac{n-2p+4}{(p-2)^2} [\frac12(u^\ez_t)^2(|Du^\ez|^2+\ez)^{2-p}- \Delta u^\ez u^\ez_t(|Du^\ez|^2+\ez)^{\frac{2-p}2}]\nonumber \\
&\quad\quad+\{-(\Delta u^\ez)^2-(p-2)\frac{(\Delta_\fz u^\ez)^2}{[|Du^\ez|^2+\ez]^2}+\frac{\Delta u^\ez u^\ez_t}{p-2}[|Du^\ez|^2+\ez]^{\frac{2-p}2}+ \frac{\ez}{2}\frac{(\Delta u^\ez)^2}{|Du^\ez|^2+\ez}\}.
\end{align*}
Observe that $p\in(1,2)\cup(2,3)$ implies
$$
\frac{n}{2(p-2)^2}-\frac n2>0.
$$
By bounding  the integration of the last two terms (see Lemmas \ref{th} and \ref{th5-1-1}),    we conclude $u^\ez\in W^{2,2}_\loc(U_T)$ uniformly in $\ez>0$.

\section{Structures for $\Delta v\Delta_\fz v$ and $|D^2v|^2-(\Delta v)^2$}

The following algebraic structural inequality for $\Delta v\Delta_\fz v$ plays a central role in this paper.
\begin{lem}\label{keylem} Let $n\ge2$ and $U $ be a domain of $\rn$. For any $v\in C^\fz(U )$, we have
 \begin{align}  &\left|  { |D^2vDv|^2} -  {\Delta v \Delta_\fz v } -\frac12[|D^2v|^2-(\Delta v)^2]|Dv|^2\right|\nonumber\\
 &\quad\quad \quad\quad \quad\quad\quad\quad\quad\quad\quad\le \frac{n-2}2 [|D^2v|^2{|Dv|^2}-  |D^2vDv|^2 ]\quad \mbox{in $U $}.\label{keyineqx}
 \end{align}
\end{lem}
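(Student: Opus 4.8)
The plan is to regard \eqref{keyineqx} as a pointwise, purely algebraic statement. Fix $x\in U$ and set $A:=D^2v(x)$, a symmetric $n\times n$ matrix, and $\xi:=Dv(x)\in\rn$; writing $|A|$ for the Hilbert--Schmidt norm (so that $|D^2v|^2=|A|^2$, $\Delta v=\operatorname{tr}A$, $\Delta_\fz v=A\xi\cdot\xi$ and $D^2vDv=A\xi$), the inequality to be proved reads
$$\Big| |A\xi|^2-(\operatorname{tr}A)(A\xi\cdot\xi)-\tfrac12\big[|A|^2-(\operatorname{tr}A)^2\big]|\xi|^2\Big|\le\tfrac{n-2}{2}\big[|A|^2|\xi|^2-|A\xi|^2\big],$$
and I would prove it for every symmetric $A$ and every $\xi\in\rn$. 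Both sides are invariant under the simultaneous rotation $(A,\xi)\mapsto(QAQ^{\mathsf T},Q\xi)$ with $Q$ orthogonal, and both are homogeneous of degree two in $\xi$; hence, the case $\xi=0$ being trivial, I may normalize $|\xi|=1$ and then rotate so that $\xi=e_1$.

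Next I would split $A$ into blocks adapted to $e_1$: let $a:=A_{11}$, let $b:=(A_{12},\dots,A_{1n})^{\mathsf T}\in\rr^{n-1}$ be the rest of the first column, and let $B:=(A_{ij})_{2\le i,j\le n}$ be the lower right symmetric $(n-1)\times(n-1)$ block, so that $|A\xi|^2=a^2+|b|^2$, $A\xi\cdot\xi=a$, $\operatorname{tr}A=a+\operatorname{tr}B$ and $|A|^2=a^2+2|b|^2+|B|^2$. Substituting these, the $a$- and $b$-contributions on the left cancel and the quantity inside the absolute value collapses to $\tfrac12\big[(\operatorname{tr}B)^2-|B|^2\big]$, while the right-hand side becomes $\tfrac{n-2}{2}\big[|b|^2+|B|^2\big]$. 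Discarding the nonnegative term $|b|^2$ on the right, \eqref{keyineqx} is reduced to the elementary claim
$$\big|(\operatorname{tr}B)^2-|B|^2\big|\le(n-2)\,|B|^2\qquad\text{for every symmetric }(n-1)\times(n-1)\text{ matrix }B.$$

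To finish I would diagonalize $B$, say with eigenvalues $\mu_1,\dots,\mu_{n-1}$, so that $\operatorname{tr}B=\sum_i\mu_i$ and $|B|^2=\sum_i\mu_i^2$. The upper bound $(\operatorname{tr}B)^2-|B|^2\le(n-2)|B|^2$ is precisely the Cauchy--Schwarz inequality $(\sum_i\mu_i)^2\le(n-1)\sum_i\mu_i^2$; the lower bound $(\operatorname{tr}B)^2-|B|^2\ge-(n-2)|B|^2$ follows from $(\operatorname{tr}B)^2\ge0$ together with $n\ge3$, while for $n=2$ the matrix $B$ is a scalar and both sides of \eqref{keyineqx} already vanish, consistently with the planar identity \eqref{keyiden}. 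I do not foresee a genuine obstacle: the only place that demands care is the block computation, i.e.\ verifying that after substitution everything cancels except $(\operatorname{tr}B)^2-|B|^2$ on the left and $|b|^2+|B|^2$ on the right, together with a separate check of the degenerate endpoint $n=2$.
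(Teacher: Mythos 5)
Your proof is correct, and it takes a genuinely different route from the paper's. The paper diagonalizes the full Hessian $D^2v$ at the point in question, writing $O^TD^2vO=\mathrm{diag}(\lambda_1,\dots,\lambda_n)$ and $O^TDv=(a_1,\dots,a_n)$ with $\sum a_i^2=1$, and then proves an auxiliary scalar inequality (their Lemma 2.2) in the variables $\lambda_i,a_i$ by manipulating the double sum $\sum_j a_j^2\sum_{i\ne j}\lambda_i\lambda_j$ and applying Cauchy--Schwarz term by term. You instead rotate so that $Dv$ is parallel to $e_1$ and block-decompose $A=D^2v$ into $a=A_{11}$, the vector $b$ of remaining first-row entries, and the trailing $(n-1)\times(n-1)$ block $B$. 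Your block computation is correct: both the $a$ and $b$ contributions cancel on the left, leaving $\tfrac12[(\operatorname{tr}B)^2-|B|^2]$, while the right side becomes $\tfrac{n-2}{2}[|b|^2+|B|^2]$, and the whole thing reduces after discarding $|b|^2$ to the $(n-1)$-dimensional Cauchy--Schwarz inequality $(\operatorname{tr}B)^2\le(n-1)|B|^2$ (for the upper bound) together with $(\operatorname{tr}B)^2\ge 0$ (for the lower bound when $n\ge3$), with $n=2$ degenerating to the planar identity. Both proofs ultimately rest on Cauchy--Schwarz, but your reduction has a structural advantage: it makes visible that the whole left-hand side depends only on the block of $D^2v$ orthogonal to $Dv$, which also explains why the bound on the right is a multiple of $|D^2v|^2|Dv|^2-|D^2vDv|^2=|b|^2+|B|^2$ (the squared norm of $D^2v$ restricted off the $Dv$-direction). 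The paper's proof obscures this by tracking the coupling between eigenvalues and direction cosines. Your argument is, if anything, shorter and more transparent, and it correctly isolates the $n=2$ endpoint.
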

To prove this we need the following result.
\begin{lem} \label{keylem0} For any vector  ${\vec \lz}=(\lz_1,\ldots,\lz_n)$ and $\vec a=(a_1,\ldots,a_n)\in\rn$ with $|\vec a|=1$,  we have
 \begin{align*} & \left| \sum_{i=1}^n (\lz_i)^2(a_i)^2 -  (\sum_{i=1}^n \lz_i) [\sum_{j=1}^n \lz_j(a_j)^2] -\frac12[\sum_{i=1}^n (\lz_i)^2-(\sum_{i=1}^n \lz_i)^2] \right| \\
 &\quad\le
 \frac{n-2}2 [\sum_{i=1}^n (\lz_i)^2-  \sum_{i=1}^n (\lz_i)^2(a_i)^2 ].
 \end{align*}
\end{lem}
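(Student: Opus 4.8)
The plan is to prove Lemma~\ref{keylem0} by a direct algebraic reorganization that, after the triangle inequality, reduces it to a one-line elementary inequality on $n-1$ real numbers; Lemma~\ref{keylem} then follows by diagonalization. First I would set $p_i:=(a_i)^2\ge 0$, so that $\sum_{i=1}^n p_i=1$, and abbreviate $S:=\sum_i\lambda_i$, $P:=\sum_i(\lambda_i)^2$, $A:=\sum_i\lambda_i p_i$, $B:=\sum_i(\lambda_i)^2p_i$; the quantity inside the absolute value is then $F:=B-SA-\tfrac12P+\tfrac12S^2$. Expanding $\tfrac12S^2-\tfrac12P=\sum_{i<j}\lambda_i\lambda_j$ and, after writing $S=\lambda_i+\sum_{j\ne i}\lambda_j$ inside $SA$, computing $B-SA=-\sum_i\sum_{j\ne i}p_i\lambda_i\lambda_j=-\sum_{i<j}\lambda_i\lambda_j(p_i+p_j)$, one obtains using $\sum_k p_k=1$ the identity
\[
F=\sum_{i<j}\lambda_i\lambda_j\Big(\sum_{k\ne i,j}p_k\Big)=\sum_{k=1}^n p_k\,\sigma_k,\qquad
\sigma_k:=\sum_{\substack{i<j\\ i,j\ne k}}\lambda_i\lambda_j,
\]
so that $\sigma_k$ is the second elementary symmetric function of the $n-1$ numbers $\{\lambda_i:i\ne k\}$. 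Exactly the same bookkeeping gives $P-B=\sum_i(\lambda_i)^2(1-p_i)=\sum_k p_k\,\tau_k$ with $\tau_k:=\sum_{i\ne k}(\lambda_i)^2$ (and $P-B\ge0$ since each $p_i\le1$).

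With these two identities in hand the proof is essentially finished: since all $p_k\ge0$, the triangle inequality gives $|F|\le\sum_k p_k|\sigma_k|$, so it suffices to prove the pointwise estimate $|\sigma_k|\le\tfrac{n-2}{2}\tau_k$ for each $k$. This is the special case $m=n-1$ of the elementary inequality $\big|\sum_{1\le i<j\le m}\mu_i\mu_j\big|\le\tfrac{m-1}{2}\sum_{i=1}^m(\mu_i)^2$, valid for arbitrary reals $\mu_1,\dots,\mu_m$; it follows immediately from $2\sum_{i<j}\mu_i\mu_j=(\sum_i\mu_i)^2-\sum_i(\mu_i)^2$ together with the Cauchy--Schwarz bound $(\sum_i\mu_i)^2\le m\sum_i(\mu_i)^2$ for the upper estimate and $(\sum_i\mu_i)^2\ge0$ for the lower estimate (the degenerate case $m=1$, i.e. $n=2$, being the trivial $0\le 0$). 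Multiplying $|\sigma_k|\le\tfrac{n-2}{2}\tau_k$ by $p_k$ and summing over $k$ then yields $|F|\le\tfrac{n-2}{2}(P-B)$, which is the assertion.

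I do not anticipate a genuine obstacle here: the only non-mechanical step is recognizing the rearrangement of $F$ into $\sum_k p_k\sigma_k$, after which the constraint $\sum_k p_k=1$ and Cauchy--Schwarz do all the work. For completeness, Lemma~\ref{keylem} follows from Lemma~\ref{keylem0} by fixing a point $x$, diagonalizing the symmetric matrix $D^2v(x)$ there with eigenvalues $\vec\lambda$, taking $\vec a$ to be the coordinates of $Dv(x)/|Dv(x)|$ in the associated orthonormal eigenbasis (the inequality being trivial where $Dv(x)=0$), and observing that $\Delta v,\ \Delta_\fz v,\ |D^2vDv|^2,\ |D^2v|^2$ then become $S,\ |Dv|^2A,\ |Dv|^2B,\ P$ respectively, while both sides of \eqref{keyineqx} are homogeneous of degree $2$ in $|Dv|$.
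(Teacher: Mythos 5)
Your proposal is correct and follows essentially the same route as the paper's proof: both reorganize the expression inside the absolute value into $\sum_k (a_k)^2\sigma_k$, where $\sigma_k=\sum_{i<j,\,i,j\ne k}\lambda_i\lambda_j$ is the second elementary symmetric function of $\{\lambda_i\}_{i\ne k}$, use $\sum_k(a_k)^2=1$ to absorb the $\tfrac12[P-S^2]$ contribution, and then conclude from the termwise bound $|\sigma_k|\le\tfrac{n-2}{2}\sum_{i\ne k}\lambda_i^2$. The only (minor) divergence is how that termwise bound is obtained: the paper estimates each product $|\lambda_i\lambda_j|\le\tfrac12(\lambda_i^2+\lambda_j^2)$ and counts that each $\lambda_i^2$ occurs $n-2$ times, while you use $2\sigma_k=(\sum_{i\ne k}\lambda_i)^2-\sum_{i\ne k}\lambda_i^2$ together with $0\le(\sum_{i\ne k}\lambda_i)^2\le(n-1)\sum_{i\ne k}\lambda_i^2$; both are one-line elementary arguments yielding the identical constant.
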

\begin{proof}
Write
  \begin{align*}
  &\sum_{i=1}^n (\lz_i)^2(a_i)^2 -  (\sum_{i=1}^n \lz_i) [\sum_{j=1}^n \lz_j(a_j)^2]
   = -\sum_{i=1}^n \sum_{j\ne i} \lz_i   \lz_j(a_j)^2
     = -\sum_{j=1}^n (a_j)^2 \sum_{i\ne j} \lz_i   \lz_j
    \end{align*}
 Given any $j=1,\ldots, n$,  write
$$ \sum_{i\ne j} \lz_i   \lz_j =
\sum_{1\le i<j} \lz_i   \lz_j +\sum_{ j<k\le n}    \lz_j\lz_k= \sum_{i=1}^{n-1} \sum_{ i< k\le n}\lz_i\lz_k- \sum_{i=1}^{j-1} \sum_{ i<k\ne j} \lz_i\lz_k -\sum_{i=j+1}^{n-1} \sum_{ i< k\le n} \lz_i\lz_k .$$
Since
$$\sum_{i=1}^{n-1} \sum_{ i< k\le n}\lz_i\lz_k =\frac12 [\sum_{i=1}^n (\lz_i)^2-  (\sum_{i=1}^n  \lz_i)^2 ],$$
by using $\sum_{j=1}^n  (a_j)^2=1$,
we have
 \begin{align*} -\sum_{j=1}^n (a_j)^2 \sum_{i\ne j} \lz_i   \lz_j=
 \frac12 [\sum_{i=1}^n (\lz_i)^2-  (\sum_{i=1}^n  \lz_i)^2 ]-   \sum_{j=1}^n (a_j)^2 (\sum_{i=1}^{j-1} \sum_{ i<k\ne j}  +\sum_{i=j+1}^{n-1} \sum_{ i< k\le n}) \lz_i\lz_k .
    \end{align*}
By the Cauchy-Schwarz inequality,
  \begin{align*} \left|(\sum_{i=1}^{j-1} \sum_{ i<k\ne j}  +\sum_{i=j+1}^{n-1} \sum_{ i< k\le n}) \lz_i\lz_k \right| &\le\frac12(\sum_{i=1}^{j-1} \sum_{ i<k\ne j}
 + \sum_{i=j+1}^{n-1} \sum_{ i< k\le n}) [(\lz_i)^2+(\lz_k)^2] \\
&= \frac{n-2}2\sum_{i\ne j}(\lz_i)^2\\
&= \frac{n-2}2\sum_{i=1}^n(\lz_i)^2 -\frac{n-2}2(\lz_j)^2.
\end{align*}
Using $\sum_{j=1}^n  (a_j)^2=1$ again, we conclude
$$\left |-   \sum_{j=1}^n (a_j)^2 (\sum_{i=1}^{j-1} \sum_{ i<k\ne j}  +\sum_{i=j+1}^{n-1} \sum_{ i< k\le n}) \lz_i\lz_k\right|\le \frac{n-2}2[\sum_{i=1}^n(\lz_i)^2 - \sum_{j=1}^n(\lz_j)^2(a_j)^2].$$
Combining the inequalities above, we get \eqref{keyineqx} as desired.
\end{proof}

We are ready to prove Lemma \ref{keylem}.

\begin{proof}[Proof of Lemma \ref{keylem}] Let  $\bar x\in U $.
If $Dv(\bar x)=0$,   then obviously   \eqref{keyineqx} holds.
We   assume that $Dv(\bar x)\ne 0$ below.  By  dividing both sides by $|Dv|^2$, we further assume that $|Dv(\bar x)|=1$.

At  $\bar x$, $D^2v$ is a symmetric matrix  and hence
 its eigenvalues are given by  $\{\lz_i\}_{i=1}^n\subset\rr$.
 One may find an orthogonal matrix $O\in {\bf O}(n)$ so that
$$O^TD^2vO={\rm diag}\{\lz_1,\lz_2,\ldots,\lz_n\}.$$
Note that $O^{-1}=O^T$.
At $\bar x$, it follows that
$$
|D^2v|^2=|O^TD^2v O|^2=\sum_{i=1}^n(\lz_i)^2,\quad \Delta v=\sum_{i=1}^n \lz_i.
$$
Writing $O^TDv=\sum_{i=1} a_i {\bf e}_i=:a$,   we have
$$\Delta_\fz v=(Dv)^TD^2vDv=(O^T Dv)^T (O^T D^2v O) (O^TDv)=\sum_{i=1}^n \lz_i(a_i)^2  $$
and $$|D^2vDv|^2= |(O^T D^2v O)(O^TDv)|^2=\sum_{i=1}^n (\lz_i)^2(a_i)^2.$$
Applying Lemma \ref{keylem0} to $\vec\lz $ and
$\vec a $, we obtain
 \begin{align*} &\left|  { |D^2vDv|^2} -  {\Delta v \Delta_\fz v } -\frac12[|D^2v|^2-(\Delta v)^2]|Dv|^2\right|\\
  &\quad =\left| \sum_{i=1}^n (\lz_i)^2(a_i)^2 -  (\sum_{i=1}^n \lz_i) [\sum_{j=1}^n \lz_j(a_j)^2] -\frac12[\sum_{i=1}^n (\lz_i)^2-(\sum_{i=1}^n \lz_i)^2] \right| \\
 &\quad\le
 \frac{n-2}2 [\sum_{i=1}^n (\lz_i)^2-  \sum_{i=1}^n (\lz_i)^2(a_i)^2 ]\\
 &\quad= \frac{n-2}2 [|D^2v|^2{|Dv|^2}-  |D^2vDv|^2 ]
 \end{align*}
 as desired.
 \end{proof}

We also need the following divergence structure of $|D^2v |^2-(\Delta v )^2$.
Below we use Einstein's summation convention, that is, $a_ib_i=\sum_{i=1}^na_ib_i$.

\begin{lem}\label{div} For  any  $v\in C^\fz(U)$, $\phi\in C_c^\fz(U )$
and vector ${{\vec c}}\in\rn$, we have
   \begin{align}\label{du-c}
  \left|\int_U [|D^2v |^2-(\Delta v )^2] \phi^2\,dx\right|
 \le C  \int_{U }  |Dv-{{\vec c}}|^2 [|\phi||D^2\phi|+|D\phi|^2]\,dx.
 \end{align}
\end{lem}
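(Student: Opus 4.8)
The plan is to prove the pointwise identity
\[
|D^2v|^2-(\Delta v)^2 = \operatorname{div}\bigl(D^2vDv-\Delta v\,Dv\bigr),
\]
or rather its version with $Dv$ replaced by $Dv-{\vec c}$ (which is harmless since ${\vec c}$ is constant), and then integrate against $\phi^2$ and move derivatives onto $\phi^2$ via the divergence theorem. Concretely, writing $w:=Dv-{\vec c}$ so that $D^2v=Dw$, $\Delta v=\operatorname{div}w$, one checks by direct differentiation that
\[
\partial_i\bigl(v_{ij}w_j-v_{kk}w_j\delta_{ij}\bigr)
=\partial_i\bigl(v_{ij}w_j\bigr)-\partial_j\bigl(\Delta v\,w_j\bigr)
= v_{iij}w_j+v_{ij}v_{ij}-v_{kkj}w_j-\Delta v\,v_{jj}
= |D^2v|^2-(\Delta v)^2,
\]
where I have used $w_j=v_j-c_j$, $\partial_i w_j=v_{ij}$, and the symmetry $v_{iij}=v_{jii}=\partial_j\Delta v$ valid for $v\in C^\infty$. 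Thus the quantity $|D^2v|^2-(\Delta v)^2$ is an exact divergence of a vector field that is linear in $w=Dv-{\vec c}$ and linear in $D^2v$.

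Having this divergence form, I would multiply by $\phi^2$ with $\phi\in C_c^\infty(U)$ and integrate: since $v\in C^\infty(U)$ and $\phi$ has compact support, there is no boundary term, and integration by parts gives
\[
\int_U[|D^2v|^2-(\Delta v)^2]\phi^2\,dx
= -\int_U\bigl(v_{ij}w_j-\Delta v\,w_i\bigr)\,\partial_i(\phi^2)\,dx
= -2\int_U\bigl(v_{ij}w_j-\Delta v\,w_i\bigr)\phi\,\partial_i\phi\,dx.
\]
The right side still contains one factor of $D^2v$, which I must absorb. Here I integrate by parts once more, moving the derivative off $D^2v$: for the term $\int_U v_{ij}w_j\phi\,\phi_i\,dx$ write $v_{ij}=\partial_i w_j$, integrate by parts in $x_i$, and similarly handle $\int_U\Delta v\,w_i\phi\,\phi_i\,dx$ by writing $\Delta v=\partial_j w_j$ and integrating by parts in $x_j$. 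Each such integration by parts produces terms of the form $\int_U w_jw_k\,(\partial\phi)(\partial\phi)\,dx$ and $\int_U w_jw_k\,\phi\,(\partial^2\phi)\,dx$ — that is, quadratic in $w=Dv-{\vec c}$ and involving only $|D\phi|^2$ or $|\phi||D^2\phi|$ — with universal numerical coefficients. Collecting these and applying the triangle inequality yields exactly the bound \eqref{du-c} with $C$ an absolute constant.

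The only mild subtlety is bookkeeping with indices so that after the second integration by parts every surviving term has {\it no} derivative on $v$ of order higher than one, i.e.\ that the $\int v_{ij}v_{ij}\phi^2$-type terms cancel; this is guaranteed because we already extracted $|D^2v|^2-(\Delta v)^2$ as the full divergence, so re-integrating by parts simply reverses part of that computation and cannot regenerate a second-derivative-squared term uncancelled. If one prefers to avoid tracking signs, an alternative is to note that $w\in C^\infty$ and apply the identity to the smooth vector field directly, or to first prove \eqref{du-c} for $v$ replaced by $v-\ell$ with $\ell$ the affine function with $D\ell={\vec c}$, reducing to the case ${\vec c}=0$. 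The main (and really the only) obstacle is this careful double integration by parts and the verification that no $|D^2v|^2$ term remains on the right-hand side; everything else is routine.
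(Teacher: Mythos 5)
Your proposal is correct and follows essentially the same route as the paper: the divergence identity $|D^2v|^2-(\Delta v)^2=\operatorname{div}(D^2v\,Dv-\Delta v\,Dv)$ (harmlessly shifted by $\vec c$), then two integrations by parts against $\phi^2$. The one place where your heuristic is loose — the claim that re-integrating by parts ``cannot regenerate an uncancelled $|D^2v|^2$ term'' — does hold, but the concrete mechanism (as in the paper) is the symmetry $v_{x_ix_j}=v_{x_jx_i}$, which makes the surviving second-derivative term $w_j v_{ij}\phi\phi_{x_i}$ identical to $\tfrac12\partial_i(|w|^2)\phi\phi_{x_i}$ and hence amenable to one more integration by parts producing only quadratic-in-$w$ quantities.
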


\begin{proof}
First, we note that
\begin{align}\label{e8.03}|D^2v |^2-(\Delta v  )^2={\rm div}(D^2v   Dv -\Delta v   Dv ) \quad\mbox{in $U$.}\end{align}
Via integration by parts, a direct calculation leads to
\begin{align*}
 \int_U[|D^2v  |^2-(\Delta v  )^2] \phi^2\,dx
 &=       -  2 \int_{U} (v  _{{x_ix_j}}v  _{x_i}-v  _{x_ix_i}v  _{x_j}) \phi_{x_j}\phi \,dx.
\end{align*}
For any vector ${{\vec c}}=(c_1,c_2,...,c_n)\in\rn$, since
  $$ -  2 \int_{U} (v  _{{x_ix_j}}c_{ i}-v  _{x_ix_i}c_{ j}) \phi_{x_j}\phi \,dx=2 \int_{U} (v  _{{x_j}}c_{ i}-v  _{x_i }c_{j }) (\phi_{x_j}\phi)_{x_i}\,dx=0,$$
  one has
  \begin{align*}
 &\int_U[|D^2v |^2-(\Delta v )^2] \phi^2\,dx\\
 &\quad=       -  2 \int_{U} [v _{{x_ix_j}}(v _{x_i}-c_i)-v _{x_ix_i}(v _{x_j}-c_j)] \phi_{x_j}\phi \,dx\\
 &\quad =-   2\int_{U}  (v _ {x_i}-c_i)_{x_j} (v _{x_i}-c_i)  \phi_{x_j}\phi \,dx
 +2\int_{U} (v _{x_i}-c_i)_{x_i}(v _{x_j}-c_j)  \phi_{x_j}\phi \,dx.
 \end{align*}
Using integration by parts,
 \begin{align*} -   2\int_{U}  (v _ {x_i}-c_i)_{x_j} (v _{x_i}-c_i)  \phi_{x_j}\phi \,dx& =-   \int_{U} (|Du-c|^2)_{x_j} \phi_{x_j}\phi \,dx =
  \int_{U}  |Du-c|^2 (\phi_{x_j}\phi)_{x_j}\,dx
 \end{align*} and \begin{align*}
  &2\int_{U} (v _{x_i}-c_i)_{x_i}(v _{x_j}-c_j)  \phi_{x_j}\phi \,dx\\
  &\quad=
-
  2\int_{U} (v _{x_i}-c_i)(v _{x_j}-c_j)_{x_i}  \phi_{x_j}\phi \,dx-
  2\int_{U} (v _{x_i}-c_i) (v _{x_j}-c_j)  (\phi_{x_j}\phi)_{x_i} \,dx
  \\
  &\quad= -
  2\int_{U} (v _{x_i}-c_i)(v _{x_i}-c_i)_{x_j}  \phi_{x_j}\phi \,dx-
  2\int_{U} (v _{x_i}-c_i) (v _{x_j}-c_j)  (\phi_{x_j}\phi)_{x_i} \,dx.
 \end{align*}
 Combining these and using the Cauchy-Schwarz inequality, we conclude \eqref{du-c}.
\end{proof}

\section{Proofs of Theorem \ref{thm1} and Corollary \ref{thm2}}

Let $u$ be a $p$-harmonic function in $\Omega$.
Given any   smooth domain $U\Subset \Omega$,   for $\ez\in(0,1]$
we let $u^\ez\in W^{1,p}(U)\cap C^0(\overline U)$  be a weak solution to   the regularized   equation
\eqref{ap-plap}.
By the elliptic theory, we know that $u^\ez\in C^\fz(U )\cap C^0(\overline U )$,
$Du^\ez\in L^\fz(U)$ uniformly in $\ez>0$ and $u^\ez\to u$ in $C^0(U)$  as $\ez\to0$; see \cite{u68,l83,d82}.

Applying \eqref{keyineqx} to $u^\ez$, we claim the following two inequalities:
\begin{align}\label{pi-plapx}
&[\frac{n}{2(p-2)^2}+\frac{1}{p-2}-\frac{n-2}{2}] |D^2u^\ez|^2 \le[\frac{n}{2(p-2)^2}+\frac{1}{p-2}+\frac{1}{2}] [|D^2u^\ez|^2-(\Delta u^\ez)^2]\quad \mbox{in $U$}
\end{align}
and
\begin{align}\label{alden1}
\frac{n}{2}|D|Du^\ez||^2+\frac{1}{p-2} \frac{(\Delta u^\ez)^2}{|Du^\ez|^2} [|Du^\ez|^2+\ez]
\le\frac{1}{2}[|D^2u^\ez|^2-(\Delta u^\ez)^2]+\frac{n-2}{2}|D^2u^\ez|^2
\end{align}
whenever $|Du^\ez|$ is differentiable and hence almost everywhere in $U$.
 Note that here, $u^\ez\in C^\fz(U)$ implies $|Du^\ez|$ is locally Lipschitz in $U$, and hence, by Rademacher's theorem, $D|Du^\ez|$ exists almost everywhere in $U$.
Moreover, at a point $\bar x \in U$, if $Du^\ez(\bar x )=0$, then we may always
 set
\begin{equation}\label{e3.z3} \frac{(\Delta u^\ez)^2}{|Du^\ez|^\alpha}=0\quad\mbox{for any $0\le \alpha<4$.}
\end{equation}
Indeed,
\begin{align*}
(\Delta u^\ez)^2=\frac{(p-2)^2(\Delta_\fz u^\ez)^2}{[|Du^\ez|^{ 2}+\ez]^2}
\le (p-2)^2\frac{|D^2u^\ez|^2|Du^\ez|^4}{[|Du^\ez|^  2 +\ez]^2}=O(|Du^\ez|^4) \ \mbox{whenever $x\to\bar x$.}
\end{align*}

\medskip
{\noindent\it Proofs of \eqref{pi-plapx} and \eqref{alden1}.}
Given any point $\bar x\in U$, if $Du^\ez(\bar x )= 0$, then by \eqref{e3.z3}, one has
\eqref{pi-plapx}.  If $Du^\ez(\bar x )= 0$ and also
$|Du^\ez|$ is also differentiable at $\bar x$, then $D|Du^\ez|(\bar x)=0$.
By \eqref{e3.z3} again, \eqref{alden1} holds at $\bar x$.

Below we assume  $Du^\ez(\bar x )\ne 0$.
Observe that $|Du^\ez|$ is differentiable at $\bar x$ and
\begin{equation}\label{e3.z2}|D|Du^\ez|(\bar x )|=\frac{|D^2u^\ez(\bar x )Du^\ez(\bar x )|}{|Du^\ez|(\bar x )}.
\end{equation}
 On the other hand, applying \eqref{keyineqx} to $u^\ez$ and employing the non-divergence form of \eqref{ap-plap}, at $\bar x $ one gets
 \begin{align*}
&{ |D^2u^\ez Du^\ez |^2} + \frac{(\Delta u^\ez )^2}{p-2} [|Du^\ez|^2+\ez]   -\frac12[|D^2u^\ez|^2-(\Delta u^\ez)^2]|Du^\ez|^2  \\
&\quad \le \frac{n-2}2 [|D^2u^\ez|^2{|Du^\ez |^2}-  |D^2u^\ez Du^\ez|^2 ].
 \end{align*}
Dividing both sides by $|Du^\ez(\bar x )|^2$, at $\bar x $ we get
\begin{equation}\label{e3.z1} \frac n2 \frac{ |D^2u^\ez Du^\ez |^2}{|Du^\ez|^2} +  \frac1{p-2}\frac{(\Delta u^\ez )^2} {|Du^\ez|^2} [|Du^\ez|^2+\ez]   \le \frac12[|D^2u^\ez|^2-(\Delta u^\ez)^2]   +\frac{n-2}2 |D^2u^\ez|.
\end{equation}
 From this  and \eqref{e3.z2}, one concludes \eqref{alden1} at $\bar x$ as desired.

Moreover, at $\bar x $, employing the non-divergence form of \eqref{ap-plap}   and H\"older's inequality, one has
 $$\frac{ |D^2u^\ez Du^\ez |^2}{|Du^\ez|^2}\ge
\frac{ |\Delta_\fz u^\ez |^2}{|Du^\ez|^4}\ge  \frac1{(p-2)^2}\frac{(\Delta u^\ez)^2}{|Du^\ez|^2} [|Du^\ez|^2+\ez].$$
From this and \eqref{e3.z1}, it follows that
\begin{align}
                    \label{eq7.01}
&[\frac{n}{2(p-2)^2}+\frac{1}{p-2}] \left(\frac{\Delta u^\ez}{|Du^\ez|}\right)^2[|Du^\ez|^2+\ez] \le\frac{1}{2} [|D^2u^\ez|^2-(\Delta u^\ez)^2] +\frac{n-2}{2} |D^2u^\ez|^2.
\end{align}
Since
$$\frac{n}{2(p-2)^2}+\frac{1}{p-2}>0,$$    \eqref{eq7.01}  gives
\begin{align*}
&[\frac{n}{2(p-2)^2}+\frac{1}{p-2}] (\Delta u^\ez)^2 \le\frac{1}{2} [|D^2u^\ez|^2-(\Delta u^\ez)^2] +\frac{n-2}{2} |D^2u^\ez|^2.
\end{align*}
 Adding both sides by
 $$[\frac{n}{2(p-2)^2}+\frac{1}{p-2}][|D^2u^\ez|^2-  (\Delta u^\ez)^2],$$
 we obtain \eqref{pi-plapx}.

 By using \eqref{alden1} and Lemma \ref{div}, we prove Corollary \ref{thm2} as  follows.
   \begin{proof}[Proof of Corollary \ref{thm2}]
Since $1<p<3+\frac2{n-2}$, we have
\begin{equation}\label{e3.x8}
0<\frac{n}{2(p-2)^2}+\frac{1}{p-2}-\frac{n-2}{2}<\frac{n}{2(p-2)^2}+\frac{1}{p-2}+\frac{1}{2}.
\end{equation}
From this,  \eqref{pi-plapx}  and Lemma \ref{div}  we conclude  that
for any $\phi\in C^\fz_c(U)$,
\begin{align*}\int_U|D^2u^\ez|^2\phi^2\,dx
\le C(n,p) \inf_{{\vec c}\in\rn}\int_U|Du^\ez-{{\vec c}}|^2[|D\phi|^2+|\phi|
|D^2\phi|]\,dx.
\end{align*}
 By choosing suitable test function $\phi$, we obtain
 \begin{equation*}
 \bint_B    |D^2u^\ez |^2  \,dx \le C(n,p ) \inf_{{{\vec c}}\in\rn}\frac1{r^2}\bint_{2B}
  |Du^\ez-{{\vec c}}|^{2}  \,dx\quad\forall B=B(z,r)\Subset 2B\Subset U.
  \end{equation*}
  This together with $Du^\ez\in L^\fz(U)$ uniformly in $\ez>0$, implies that
$u^\ez\in W^{2,2}_\loc(U)$ uniformly in $\ez>0$. By the compact embedding theorem,
$u^\ez\to u$ in $W^{1,q}_\loc(U)$ for $1<q<2n/(n-2)$  and weakly in $W^{2,2}_\loc(U)$ as $\ez\to0$.
Letting $\ez\to0$, we conclude
\begin{equation*}
\bint_B    |D^2u |^2  \,dx \le C(n,p ) \inf_{{{\vec c}}\in\rn}\frac1{r^2}\bint_{2B}
  |Du-{{\vec c}}|^{2}  \,dx\quad\forall B=B(z,r)\Subset 2B\Subset U.
  \end{equation*}
Applying the Sobolev-Poincar\'e inequality, one has
    $$\left(\bint_B    |D^2u |^2  \,dx\right)^{1/2}\le C(n,p )  \left( \bint_{2B}
  |D^2u|^{\frac{2n}{n+2}}   \,dx\right)^{\frac{n+2}{2n}}\quad\forall B\Subset 2B\Subset U.$$
 Via Gehring's lemma (see for example \cite{g73,g}), we therefore conclude that there exists a $\dz_{n,p}>0$ such that
 $D^2u\in L^q_\loc(\Omega)$ for any $q<2+\dz_{n,p}$ and
    $$\left(\bint_B    |D^2u |^q  \,dx\right)^{1/q}\le C(n,p,q ) \left(\bint_B    |D^2u |^2  \,dx\right)^{1/2} \quad\forall B\Subset 2B\Subset U.$$
This gives \eqref{W2q-plap}.

To see \eqref{zz1}, let
$$K_{n,p}:=\frac{\frac n{2(p-2)^2}+\frac1{p-2}+\frac12}
{\frac n{2(p-2)^2}+\frac1{p-2}-\frac{n-2}2}=\frac{(p-1)^2+n-1}{(p-1)[n-(n-2)(p-2)]}.$$
From \eqref{pi-plapx}, \eqref{e3.x8},
$Du^\ez\to Du$ in $L^2_\loc(U)$ and weakly in $W^{1,2}_\loc(U)$ and \eqref{e8.03}, one  deduces  that
\begin{align*} \frac1{K_{n,p}}\int_U|D^2u |^2\phi^2\,dx&\le  \frac1{K_{n,p}}\lim_{\ez\to0}\int_U|D^2u^\ez |^2\phi^2\,dx\\
&
\le  \lim_{\ez\to0}\int_U[|D^2u^\ez |^2-(\Delta u^\ez)^2]\phi^2\,dx\\
&=  \lim_{\ez\to0}\int_U(D^2u^\ez Du^\ez-\Delta u^\ez Du^\ez) D(\phi^2)\,dx \\
&=   \int_U(D^2u  Du -\Delta u  Du ) D(\phi^2)\,dx\quad\forall \phi\in C^\fz_c(U),
\end{align*}
that is,
\begin{align}\label{e8.02}
\frac1{K_{n,p}} |D^2u |^2\le {\rm div }(D^2u  Du -\Delta u  Du ).
\end{align}

On the other hand, since $u\in W^{2,q}_\loc$, 
letting $\{\psi_\dz\}_{\dz>0}$ be the standard smooth mollifier,  one has
\begin{align*} \int_U(D^2uDu-\Delta uDu)\cdot D\phi\,dx&=
\lim_{\dz\to0}\int_U[D^2(u\ast\psi_\dz)D(u\ast\psi_\dz)-\Delta (u\ast\psi_\dz)D(u\ast\psi_\dz)]\cdot D\phi\,dx\\
&=
\lim_{\dz\to0}\int_U[|D^2(u\ast\psi_\dz)|^2-|\Delta (u\ast\psi_\dz) |^2] \phi\,dx\\
&= \int_U[|D^2u|^2-|\Delta u |^2] \phi\,dx\quad\forall \phi\in C^\fz_c(U),
\end{align*}
which implies that
the distributional divergence
\begin{align}\label{e8.01}{\rm div}(D^2uDu-\Delta uDu)=|D^2u|^2-(\Delta u)^2.
\end{align}
Obviously, \eqref{zz1} follows from  \eqref{e8.02} and \eqref{e8.01}.
\end{proof}

To prove Theorem \ref{thm1}, we use \eqref{pi-lpap} and also, instead of Lemma \ref{div}, the following result.
\begin{lem} \label{divp-gz} For any $\gz\in\rr$, $\eta>0$ and $\phi\in C^\fz_c(U)$, we have
\begin{align}\label{e3.x4}
 \int_U[|D^2u^\ez|^2-(\Delta u^\ez)^2][|D u^\ez|^2+\ez]^{ \frac{p-\gz}2 }\phi^2\,dx
 & \le    -   (p-\gz-\eta) \int_{U}  \frac{|D^2u^\ez Du^\ez|^2}{|Du^\ez|^2+\ez} [|D u^\ez|^2+\ez]^{ \frac{p-\gz}2 }\phi^2\,dx\nonumber\\
 & \quad    -[
   \frac{p-\gz}{p-2}-\eta]\int_{U} (\Delta u^\ez)^2  [|D u^\ez|^2+\ez]^{  \frac{p-\gz}2 }\phi^2\,dx\nonumber\\
    &\quad+ C(n,\eta) \int_{U }       [|D u^\ez|^2+\ez]^{\frac{p-\gz+2}2 }|D\phi|^2\,dx.
\end{align}
\end{lem}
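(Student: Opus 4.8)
The plan is to evaluate the left-hand side of \eqref{e3.x4} by integrating by parts through the divergence identity \eqref{e8.03}, then to use the non-divergence form of the regularized equation \eqref{ap-plap} to rewrite the resulting $\Delta u^\ez\Delta_\fz u^\ez$ cross term, and finally to absorb the remaining boundary contributions by Young's inequality. Throughout set $f:=|Du^\ez|^2+\ez$ and $X:=D^2u^\ez Du^\ez-\Delta u^\ez Du^\ez$; since $f\ge\ez>0$ while $u^\ez\in C^\fz(U)$ and $\phi\in C_c^\fz(U)$, every integrand below is smooth with support in $\mathrm{supp}\,\phi$, so all integrations by parts are legitimate regardless of the sign of $p-\gz$. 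By \eqref{e8.03},
\[\int_U[|D^2u^\ez|^2-(\Delta u^\ez)^2]\,f^{\frac{p-\gz}2}\phi^2\,dx=\int_U(\mathrm{div}\,X)\,f^{\frac{p-\gz}2}\phi^2\,dx=-\int_U X\cdot D\big(f^{\frac{p-\gz}2}\phi^2\big)\,dx.\]

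Next I would expand the gradient on the right. Since $Df=D(|Du^\ez|^2)=2D^2u^\ez Du^\ez$ and $Du^\ez\cdot(D^2u^\ez Du^\ez)=\Delta_\fz u^\ez$, one has $D\big(f^{\frac{p-\gz}2}\phi^2\big)=(p-\gz)f^{\frac{p-\gz-2}2}(D^2u^\ez Du^\ez)\phi^2+2f^{\frac{p-\gz}2}\phi\,D\phi$, whence
\begin{align*}
-\int_U X\cdot D\big(f^{\frac{p-\gz}2}\phi^2\big)\,dx&=-(p-\gz)\int_U\big(|D^2u^\ez Du^\ez|^2-\Delta u^\ez\Delta_\fz u^\ez\big)f^{\frac{p-\gz-2}2}\phi^2\,dx\\
&\quad-2\int_U X\cdot D\phi\;\phi\,f^{\frac{p-\gz}2}\,dx.
\end{align*}
The non-divergence form of \eqref{ap-plap}, namely $\Delta_\fz u^\ez=-\tfrac{f}{p-2}\Delta u^\ez$, converts the cross term into $|D^2u^\ez Du^\ez|^2-\Delta u^\ez\Delta_\fz u^\ez=|D^2u^\ez Du^\ez|^2+\tfrac{f}{p-2}(\Delta u^\ez)^2$, so the first integral on the right equals exactly
\[-(p-\gz)\int_U\frac{|D^2u^\ez Du^\ez|^2}{f}\,f^{\frac{p-\gz}2}\phi^2\,dx-\frac{p-\gz}{p-2}\int_U(\Delta u^\ez)^2\,f^{\frac{p-\gz}2}\phi^2\,dx,\]
which is precisely the pair of ``main'' terms in \eqref{e3.x4} corresponding to $\eta=0$.

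It remains to treat the boundary integral $-2\int_U X\cdot D\phi\;\phi\,f^{\frac{p-\gz}2}\,dx$. Splitting $X\cdot D\phi=(D^2u^\ez Du^\ez)\cdot D\phi-\Delta u^\ez(Du^\ez\cdot D\phi)$, for the first piece I would use $|(D^2u^\ez Du^\ez)\cdot D\phi|\le|D^2u^\ez Du^\ez|\,|D\phi|$ and the elementary inequality $2ab\le\eta a^2+\eta^{-1}b^2$ with $a=f^{-1/2}|D^2u^\ez Du^\ez|\,|\phi|\,f^{\frac{p-\gz}4}$, $b=f^{1/2}|D\phi|\,f^{\frac{p-\gz}4}$; for the second piece I would use $|Du^\ez\cdot D\phi|\le|Du^\ez|\,|D\phi|\le f^{1/2}|D\phi|$ with the same inequality for $a=|\Delta u^\ez|\,|\phi|\,f^{\frac{p-\gz}4}$, $b=f^{1/2}|D\phi|\,f^{\frac{p-\gz}4}$. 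This bounds the boundary integral by
\[\eta\int_U\frac{|D^2u^\ez Du^\ez|^2}{f}\,f^{\frac{p-\gz}2}\phi^2\,dx+\eta\int_U(\Delta u^\ez)^2\,f^{\frac{p-\gz}2}\phi^2\,dx+2\eta^{-1}\int_U f^{\frac{p-\gz+2}2}|D\phi|^2\,dx,\]
and adding it to the two main terms above gives \eqref{e3.x4} (with a constant that in fact depends only on $\eta$). There is no essential analytic obstacle here; the one point requiring care is the bookkeeping, i.e.\ carrying the weight $f^{(p-\gz)/2}$ correctly through the integration by parts and arranging the coefficients so that they collapse to $-(p-\gz-\eta)$ and $-[\tfrac{p-\gz}{p-2}-\eta]$. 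That the argument works for every $\gz\in\rr$ is precisely because $f\ge\ez>0$ keeps all negative powers of $f$ bounded for fixed $\ez>0$.
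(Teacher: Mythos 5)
Your proof is correct and follows essentially the same route as the paper's: integrate by parts against $\phi^2 f^{(p-\gz)/2}$ using the divergence identity $|D^2u^\ez|^2-(\Delta u^\ez)^2=\mathrm{div}(D^2u^\ez Du^\ez-\Delta u^\ez Du^\ez)$, replace the $\Delta u^\ez\Delta_\fz u^\ez$ cross term via the non-divergence form of the regularized equation, and absorb the cutoff terms with Young's inequality with parameter $\eta$. (As a small aside, you correctly read off $\Delta_\fz u^\ez=-\tfrac{f}{p-2}\Delta u^\ez$ from \eqref{ap-plap}; the paper's line ``Since $\tfrac{\Delta_\fz u^\ez}{|Du^\ez|^2+\ez}=\tfrac{\Delta u^\ez}{p-2}$'' has a sign typo, but the sign you use is the one that lands on the coefficient $-\tfrac{p-\gz}{p-2}$ appearing in \eqref{e3.x4}.)
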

\begin{proof}

Via integration by parts, a direct calculation leads to
\begin{align*}
 & \int_U[|D^2u^\ez|^2-(\Delta u^\ez)^2][|D u^\ez|^2+\ez]^{ \frac{p-\gz}2 }\phi^2\,dx\\
  & \quad=-\int_U( u^\ez_{x_ix_j} u^\ez_{x_i} -\Delta u^\ez  u^\ez_{x_j} )[|D u^\ez|^2+\ez]^{ \frac{p-\gz}2 }\phi^2]_{x_j}\,dx\\
 &\quad=    -   (p-\gz) \int_{U}  \frac{|D^2u^\ez Du^\ez|^2}{|Du^\ez|^2+\ez} [|D u^\ez|^2+\ez]^{ \frac{p-\gz}2 }\phi^2\,dx \\
  &\quad\quad  +
   (p-\gz) \int_{U}  \Delta u^\ez  \frac{\Delta_\fz u^\ez}{|Du^\ez|^2+\ez} [|D u^\ez|^2+\ez]^{  \frac{p-\gz}2 }\phi^2\,dx\\
 &\quad\quad  -
  2\int_{U} (u^\ez_{{x_ix_j}}u^\ez_{x_i}-\Delta u^\ez u^\ez_{x_j}) \phi_{x_j}\phi[|D u^\ez|^2+\ez]^{\frac{p-\gz}2}\,dx.
\end{align*}
By Young's inequality, for any $ \eta>0$,
  \begin{align*}
&\int_{U} (u^\ez_{{x_ix_j}}u^\ez_{x_i}-\Delta u^\ez u^\ez_{x_j}) \phi_{x_j}\phi[|D u^\ez|^2+\ez]^{\frac{p-\gz}2}\,dx\\
&\quad\le   \eta  \int_{U }  \frac{|D^2u^\ez Du^\ez|^2} {|Du^\ez|^2+\ez}
  [|D u^\ez|^2+\ez]^{\frac{p-\gz}2 }      \phi^2  \,dx   + \eta  \int_{U }   (\Delta u^\ez)^2
  [|D u^\ez|^2+\ez]^{\frac{p-\gz}2 }      \phi^2  \,dx\\
      &\quad\quad+ C(n)\frac1\eta\int_{U }       [|D u^\ez|^2+\ez]^{\frac{p-\gz+2}2 }|D\phi|^2\,dx.
    \end{align*}
Since $\frac{\Delta_\fz u^\ez}{|Du^\ez|^2+\ez}=\frac{\Delta u^\ez}{p-2}$,
we get \eqref{e3.x4} as desired.
\end{proof}
From   \eqref{alden1} and Lemma \ref{divp-gz} one deduces the following.
\begin{lem}\label{l3.x5}
If  $p\in(1,2)\cup(2,\fz)$ and $\gz<\gz_{n,p}$, then
 \begin{align}\label{e3.y4}&\int_U
 \frac{ |D^2u^\ez Du^\ez|^2}{|Du^\ez|^2+\ez}  [{|Du^\ez|^2+\ez}]^{\frac{p-\gz}2 }\phi^2\,dx+
 \int_U
 (\Delta u^\ez )^2    [{|Du^\ez|^2+\ez}]^{\frac{p-\gz}2  }\phi^2\,dx\nonumber\\
 &\quad\le
C(n,p,\gz) \int_{U }       [|D u^\ez|^2+\ez]^{\frac{p-\gz+2}2}|D\phi|^2\,dx \quad\forall \phi\in C^\fz_c(U). &\end{align}
 \end{lem}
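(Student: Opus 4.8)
The plan is to combine the pointwise inequality \eqref{alden1} with the integrated identity from Lemma \ref{divp-gz}, testing both against $[|Du^\ez|^2+\ez]^{\frac{p-\gz}2}\phi^2$, and then absorb the ``bad'' terms by choosing the free parameter $\eta$ small and checking the sign conditions that force $\gz<\gz_{n,p}$. First I would multiply \eqref{alden1} by $[|Du^\ez|^2+\ez]^{\frac{p-\gz}2}\phi^2$ and integrate over $U$; on the left this produces two nonnegative contributions,
\[
\tfrac n2\int_U |D|Du^\ez||^2[|Du^\ez|^2+\ez]^{\frac{p-\gz}2}\phi^2\,dx
=\tfrac n2\int_U \frac{|D^2u^\ez Du^\ez|^2}{|Du^\ez|^2}[|Du^\ez|^2+\ez]^{\frac{p-\gz}2}\phi^2\,dx
\]
(using \eqref{e3.z2}) and $\tfrac1{p-2}\int_U \frac{(\Delta u^\ez)^2}{|Du^\ez|^2}[|Du^\ez|^2+\ez]^{\frac{p-\gz}2+1}\phi^2\,dx$, while on the right one gets $\tfrac12\int_U[|D^2u^\ez|^2-(\Delta u^\ez)^2][|Du^\ez|^2+\ez]^{\frac{p-\gz}2}\phi^2\,dx+\tfrac{n-2}2\int_U|D^2u^\ez|^2[|Du^\ez|^2+\ez]^{\frac{p-\gz}2}\phi^2\,dx$. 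The term $|D^2u^\ez|^2$ is handled by writing $|D^2u^\ez|^2=[|D^2u^\ez|^2-(\Delta u^\ez)^2]+(\Delta u^\ez)^2$ and then using the non-divergence form of \eqref{ap-plap}, $(\Delta u^\ez)^2=(p-2)^2\frac{(\Delta_\fz u^\ez)^2}{[|Du^\ez|^2+\ez]^2}\le (p-2)^2\frac{|D^2u^\ez Du^\ez|^2}{|Du^\ez|^2}\cdot\frac{|Du^\ez|^2}{[|Du^\ez|^2+\ez]^{?}}$; more precisely, the relation $\frac{\Delta_\fz u^\ez}{|Du^\ez|^2+\ez}=\frac{\Delta u^\ez}{p-2}$ lets one replace $(\Delta u^\ez)^2[|Du^\ez|^2+\ez]^{\frac{p-\gz}2}$ by a multiple of $\frac{|D^2u^\ez Du^\ez|^2}{|Du^\ez|^2+\ez}[|Du^\ez|^2+\ez]^{\frac{p-\gz}2}$ plus controllable remainders, just as in the derivation of \eqref{eq7.01}.

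Next I would feed in Lemma \ref{divp-gz} to eliminate the factor $\int_U[|D^2u^\ez|^2-(\Delta u^\ez)^2][|Du^\ez|^2+\ez]^{\frac{p-\gz}2}\phi^2\,dx$ appearing on the right-hand side: \eqref{e3.x4} bounds it above by $-(p-\gz-\eta)\int_U\frac{|D^2u^\ez Du^\ez|^2}{|Du^\ez|^2+\ez}[|Du^\ez|^2+\ez]^{\frac{p-\gz}2}\phi^2\,dx-[\frac{p-\gz}{p-2}-\eta]\int_U(\Delta u^\ez)^2[|Du^\ez|^2+\ez]^{\frac{p-\gz}2}\phi^2\,dx$ plus the harmless gradient-of-$\phi$ term $C(n,\eta)\int_U[|Du^\ez|^2+\ez]^{\frac{p-\gz+2}2}|D\phi|^2\,dx$. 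The point is that the two quadratic ``good'' integrals — call them $I_1:=\int_U\frac{|D^2u^\ez Du^\ez|^2}{|Du^\ez|^2+\ez}[|Du^\ez|^2+\ez]^{\frac{p-\gz}2}\phi^2\,dx$ and $I_2:=\int_U(\Delta u^\ez)^2[|Du^\ez|^2+\ez]^{\frac{p-\gz}2}\phi^2\,dx$ — now appear on both sides with definite coefficients. Collecting everything, one arrives at an inequality of the form $(A-\eta B)I_1+(C-\eta D)I_2\le C(n,p,\gz,\eta)\int_U[|Du^\ez|^2+\ez]^{\frac{p-\gz+2}2}|D\phi|^2\,dx$, where $A,C$ are explicit rational functions of $n,p,\gz$. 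Since $1\le \frac{|Du^\ez|^2}{|Du^\ez|^2+\ez}\cdot(\cdots)$ comparisons are needed to pass between $\frac{|D^2u^\ez Du^\ez|^2}{|Du^\ez|^2}$ and $\frac{|D^2u^\ez Du^\ez|^2}{|Du^\ez|^2+\ez}$, I would absorb the excess via $\ez\le 1$ and the uniform bound $|Du^\ez|\in L^\fz_\loc$; but in fact the cleaner route is to keep everything with denominator $|Du^\ez|^2+\ez$ from the start, which is how Lemma \ref{divp-gz} is already phrased.

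The main obstacle — and the place where $\gz_{n,p}$ enters — is verifying that the coefficients $A$ and $C$ of $I_1$ and $I_2$ are \emph{strictly positive} for $\gz<\gz_{n,p}$, so that a small enough $\eta$ keeps $A-\eta B>0$ and $C-\eta D>0$. I expect $A$ to be (a positive multiple of) something like $\tfrac n2+\tfrac12(p-\gz)-\tfrac{n-2}2\cdot(\text{correction})$ and $C$ to involve $\tfrac1{p-2}-\tfrac12\cdot\tfrac{p-\gz}{p-2}\pm\cdots$; setting both $>0$ should yield exactly the two constraints $\gz<p+\frac{n}{n-1}$ and $\gz<3+\frac{p-1}{n-1}$, whose minimum is $\gz_{n,p}$. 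One must be a little careful with the sign of $p-2$ (the cases $p\in(1,2)$ and $p\in(2,\fz)$ flip several inequalities), but since the hypotheses are symmetric in form this is bookkeeping rather than a genuine difficulty. Once $A,C>0$ with $\eta$ fixed accordingly, dividing through gives precisely \eqref{e3.y4} with $C(n,p,\gz)$ depending only on the stated parameters, completing the proof.
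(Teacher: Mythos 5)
The overall skeleton is right (multiply \eqref{alden1} by the weight, integrate, feed in Lemma \ref{divp-gz}), but the proposal breaks at the very step you flag as ``the main obstacle.'' You propose to read off coefficients $A$ and $C$ for $I_1$ and $I_2$ and conclude by requiring $A>0$ \emph{and} $C>0$ separately. That does not give the stated range of $\gz$. After applying \eqref{e3.x4}, the coefficients are
\[
A=\tfrac n2+\tfrac{(n-1)(p-\gz)}{2},\qquad C=\tfrac1{p-2}-\tfrac{n-2}{2}+\tfrac{(n-1)(p-\gz)}{2(p-2)},
\]
and $C>0$ is equivalent (for $p>2$) to $\gz<\tfrac{p+2n-2}{n-1}$, which is \emph{strictly smaller} than $3+\tfrac{p-1}{n-1}$ whenever $n\ge3$ (e.g. $n=3$, $p=3$: $C>0$ forces $\gz<3.5$ although $\gz_{3,3}=4$, so for $\gz\in(3.5,4)$ you would have $C<0$ and your argument would stall). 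For $p<2$ the sign of $\tfrac1{p-2}$ makes $C$ negative on essentially the whole relevant range, so ``bookkeeping with the sign of $p-2$'' does not repair it.

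The missing step is the transfer inequality
\[
(\Delta u^\ez)^2=(p-2)^2\frac{(\Delta_\fz u^\ez)^2}{[|Du^\ez|^2+\ez]^2}\le (p-2)^2\frac{|D^2u^\ez Du^\ez|^2}{|Du^\ez|^2+\ez},
\quad\text{i.e.}\quad I_2\le(p-2)^2 I_1,
\]
applied \emph{after} Lemma \ref{divp-gz}, to siphon positivity from the $I_1$ slot into the $I_2$ slot. This shows $AI_1+CI_2\ge \eta I_1 + \bigl[\tfrac{A}{(p-2)^2}+C-O(\eta)\bigr]I_2$, and it is the positivity of the \emph{combination} $\tfrac{A}{(p-2)^2}+C = \tfrac{p-1}{2(p-2)^2}\,[(n-1)(p-\gz)-(n-2)(p-2)+n]$ that yields exactly $\gz<3+\tfrac{p-1}{n-1}$; the positivity of $A$ alone yields $\gz<p+\tfrac n{n-1}$. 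You do mention the relation $\frac{\Delta_\fz u^\ez}{|Du^\ez|^2+\ez}=\frac{\Delta u^\ez}{p-2}$, but you propose to spend it early (on the $\tfrac{n-2}2(\Delta u^\ez)^2$ piece from the split of $|D^2u^\ez|^2$), which changes $A$ in a way that does not reproduce $\gz_{n,p}$; the paper keeps that piece as an $I_2$-term through Lemma \ref{divp-gz} and only invokes $I_2\le(p-2)^2 I_1$ at the final coefficient-balancing step. Without this late transfer, the argument proves the lemma only on a strictly smaller interval of $\gz$.
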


\begin{proof}
From \eqref{alden1} and \eqref{e3.x4}, one has
\begin{align*}  L &:=[\frac n2+\frac{(n-1)(p-\gz)}2-\eta]\int_U
 \frac{ |D^2u^\ez Du^\ez|^2}{|Du^\ez|^2+\ez} [{|Du^\ez|^2+\ez}]^{\frac{p-\gz}2}\phi^2\,dx\\
  &\quad\quad+[\frac1{p-2}-\frac{n-2}2+\frac{(n-1)(p-\gz)}{2(p-2)}-\eta]\int_U {(\Delta u^\ez)^2}[{|Du^\ez|^2+\ez}]^{\frac{p-\gz}2}\phi^2\,dx\\
   &\le   C(n)\frac1\eta\int_{U }       [|D u^\ez|^2+\ez]^{\frac{p-\gz+2}2 }|D\phi|^2\,dx.
  \end{align*}
By the non-divergence form of \eqref{ap-plap},
  $$\frac{(\Delta u^\ez)^2}{(p-2)^2} =
 \frac{|\Delta_\fz u^\ez|^2}{[|D u^\ez|^2+\ez]^2}\le  \frac{|D^2u^\ez Du^\ez|^2|Du^\ez|^2}{[|D u^\ez|^2+\ez]^2}\le \frac{|D^2u^\ez Du^\ez|^2 }{ |Du^\ez|^2 +\ez}.$$
Since $\gz<p+\frac{n}{n-1}$ implies  $$\frac n2+\frac{(n-1)(p-\gz)}2>0,$$
for $0<\eta<\frac14[\frac n2+\frac{(n-1)(p-\gz)}2]$  we have
   \begin{align*}
 L&\ge \eta \int_U
 \frac{ |D^2u^\ez Du^\ez|^2}{|Du^\ez|^2+\ez}  [{|Du^\ez|^2+\ez}]^{\frac{p-\gz}2 }\phi^2\,dx\\
 &\quad+ [c(n,p,\gz)- \frac2{(p-2)^2}\eta-\eta]
  \int_U
  (\Delta u^\ez)^2  [{|Du^\ez|^2+\ez}]^{\frac{p-\gz}2 }\phi^2\,dx,
  \end{align*}
  where
  \begin{align*}
c(n,p,\gz)&:=\frac1{(p-2)^2}[\frac n2+\frac{(n-1)(p-\gz)}2]+\frac1{p-2}-\frac{n-2}2+\frac{(n-1)(p-\gz)}{2(p-2)}\\
&= \frac{p-1}{2(p-2)^2}[ (n-1)(p-\gz)-(n-2)(p-2)+n ].
\end{align*}
Since $\gz<3+\frac{p-1}{n-1}$ implies  $$(n-1)(p-\gz)>(p-3)(n-1)-(p-1)=(n-2)(p-2)-n,$$  we have
 $c(n,p,\gz)>0$.

Choosing $\eta>0$ so that
 $$
 c(n,p,\gz)-\frac2{(p-2)^2}\eta-\eta>\eta,$$
 we  get the desired \eqref{e3.y4}.
 \end{proof}
 As a consequence of Lemma \ref{divp-gz} and Lemma \ref{l3.x5} we obtain
  \begin{cor}  \label{c3.x6}
If  $p\in(1,2)\cup(2,\fz)$ and $\gz<\gz_{n,p}$, then
\begin{align}\label{e3.x7}
\int_U |D([|D u^\ez|^2+\ez]^{\frac{p-\gz}4}Du^\ez )|^2 \phi^2\,dx
      &\le C( n,p,\gz)\int_{U}    [|D u^\ez|^2+\ez]^{\frac{p-\gz+2}2}|D\phi|^2\,dx\quad\forall \phi\in C^\fz_c(U).
   \end{align}
\end{cor}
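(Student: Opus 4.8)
The plan is to expand $|D([|Du^\ez|^2+\ez]^{\frac{p-\gz}4}Du^\ez)|^2$ pointwise and then absorb each resulting piece into the two structural estimates already established: \eqref{e3.y4} of Lemma~\ref{l3.x5} and \eqref{e3.x4} of Lemma~\ref{divp-gz}. Put $w:=[|Du^\ez|^2+\ez]^{\frac{p-\gz}4}$, which is smooth and strictly positive on $U$ since $u^\ez\in C^\fz(U)$ and $\ez>0$. Differentiating, $Dw=\frac{p-\gz}2[|Du^\ez|^2+\ez]^{\frac{p-\gz}4-1}D^2u^\ez Du^\ez$, whence
\begin{align*}
|Dw|^2|Du^\ez|^2\le\frac{(p-\gz)^2}4\,\frac{|D^2u^\ez Du^\ez|^2}{|Du^\ez|^2+\ez}\,[|Du^\ez|^2+\ez]^{\frac{p-\gz}2},
\end{align*}
with both sides vanishing where $Du^\ez=0$. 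The Jacobian matrix of $wDu^\ez$ has entries $w_{x_j}u^\ez_{x_i}+wu^\ez_{x_ix_j}$, so $(a+b)^2\le 2a^2+2b^2$ gives the pointwise bound $|D(wDu^\ez)|^2\le 2|Dw|^2|Du^\ez|^2+2w^2|D^2u^\ez|^2$; multiplying by $\phi^2$ and integrating yields
\begin{align*}
\int_U|D(wDu^\ez)|^2\phi^2\,dx\le\frac{(p-\gz)^2}2\int_U\frac{|D^2u^\ez Du^\ez|^2}{|Du^\ez|^2+\ez}[|Du^\ez|^2+\ez]^{\frac{p-\gz}2}\phi^2\,dx+2\int_U[|Du^\ez|^2+\ez]^{\frac{p-\gz}2}|D^2u^\ez|^2\phi^2\,dx.
\end{align*}

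Now I would bound the two integrals on the right. The first is immediately $\le C(n,p,\gz)\int_U[|Du^\ez|^2+\ez]^{\frac{p-\gz+2}2}|D\phi|^2\,dx$ by \eqref{e3.y4}. For the second, the only term not directly covered by Lemma~\ref{l3.x5} is the full Hessian norm $|D^2u^\ez|^2$, so I split it algebraically as $|D^2u^\ez|^2=[|D^2u^\ez|^2-(\Delta u^\ez)^2]+(\Delta u^\ez)^2$. The $(\Delta u^\ez)^2$-piece is again controlled by \eqref{e3.y4}. For $\int_U[|D^2u^\ez|^2-(\Delta u^\ez)^2][|Du^\ez|^2+\ez]^{\frac{p-\gz}2}\phi^2\,dx$ I apply \eqref{e3.x4} with a fixed $\eta$, say $\eta=1$: this bounds it by a (possibly negative) multiple of $\int_U\frac{|D^2u^\ez Du^\ez|^2}{|Du^\ez|^2+\ez}[|Du^\ez|^2+\ez]^{\frac{p-\gz}2}\phi^2\,dx$, a (possibly negative) multiple of $\int_U(\Delta u^\ez)^2[|Du^\ez|^2+\ez]^{\frac{p-\gz}2}\phi^2\,dx$, and $C(n)\int_U[|Du^\ez|^2+\ez]^{\frac{p-\gz+2}2}|D\phi|^2\,dx$. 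Wherever a coefficient is $\le 0$ the corresponding nonnegative integral is simply dropped; wherever it is $>0$ that integral is bounded once more by \eqref{e3.y4}. Collecting the three contributions gives \eqref{e3.x7} with $C(n,p,\gz)$ independent of $\ez\in(0,1]$.

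The part I expect to require the most care is not analytic but bookkeeping: one must keep track of the signs of the coefficients $p-\gz-\eta$ and $\frac{p-\gz}{p-2}-\eta$ in \eqref{e3.x4}, which change according to whether $p<2$ or $p>2$ and according to the size of $\gz$, and one must verify that the standing hypothesis $\gz<\gz_{n,p}$ is precisely what makes Lemma~\ref{l3.x5} applicable throughout. Beyond that, the argument is a direct combination of the pointwise expansion above with the two lemmas, and the resulting constant stays bounded as $p\to 2$, which is what is needed for the passage to the limit $\ez\to 0$ in the proof of Theorem~\ref{thm1}.
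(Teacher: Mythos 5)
Your proof is correct and follows essentially the same route as the paper: both expand $D([|Du^\ez|^2+\ez]^{\frac{p-\gz}4}Du^\ez)$ pointwise, bound the result by combinations of $|D^2u^\ez|^2$ and $\frac{|D^2u^\ez Du^\ez|^2}{|Du^\ez|^2+\ez}$ times the weight $[|Du^\ez|^2+\ez]^{\frac{p-\gz}2}$, and then absorb these via Lemma~\ref{divp-gz} and Lemma~\ref{l3.x5}. The only cosmetic difference is that the paper computes the square exactly and feeds the whole thing through Lemma~\ref{divp-gz} with $\eta=1$, while you use the Young bound $(a+b)^2\le 2a^2+2b^2$ and split $|D^2u^\ez|^2$ into $[|D^2u^\ez|^2-(\Delta u^\ez)^2]+(\Delta u^\ez)^2$ by hand; both reduce to the same two lemmas with the same sign bookkeeping.
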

\begin{proof}
Note that
   \begin{align*}&|D([|D u^\ez|^2+\ez]^{\frac{p-\gz}4} Du^\ez)|^2\\&\quad=
[|D u^\ez|^2+\ez]^{\frac{p-\gz}2}\left| D^2u^\ez +   \frac{p-\gz}2 \frac{Du^\ez \otimes  D^2u^\ez Du^\ez}{|Du^\ez|^2+\ez}\right|^2\\
&\quad=[|D u^\ez|^2+\ez]^{\frac{p-\gz}2}[| D^2u^\ez|^2+ (p-\gz)\frac{|D^2u^\ez Du^\ez|^2}{|Du^\ez|^2+\ez}
+ \frac{(p-\gz)^2}4 \frac{|Du^\ez|^2 | D^2u^\ez Du^\ez|^2}{[|D u^\ez|^2+\ez]^2}]\\
&\quad\le C(n,p,\gz)[|D u^\ez|^2+\ez]^{\frac{p-\gz}2} | D^2u^\ez|^2.
\end{align*}
By Lemma \ref{divp-gz}, for any $\eta>0$,
\begin{align*}
\int_U |D^2u^\ez|^2[|D u^\ez|^2+\ez]^{ \frac{p-\gz}2 }\phi^2\,dx&\le [  - (p-\gz)+\eta ] \int_{U}  \frac{|D^2u^\ez Du^\ez|^2}{|Du^\ez|^2+\ez} [|D u^\ez|^2+\ez]^{ \frac{p-\gz}2 }\phi^2\,dx \\
 &\quad + [1-
   \frac{p-\gz}{p-2}+\eta ] \int_{U} (\Delta u^\ez)^2  [|D u^\ez|^2+\ez]^{  \frac{p-\gz}2 }\phi^2\,dx\\
   &\quad + C( n,p,\eta)\int_{U}    [|D u^\ez|^2+\ez]^{\frac{p-\gz+2}2}|D\phi|^2\,dx\\
   &\le C( n,p,\gz)\int_{U}    [|D u^\ez|^2+\ez]^{\frac{p-\gz+2}2}|D\phi|^2\,dx,
   \end{align*}
where in the last inequality we took $\eta=1$ and used Lemma \ref{l3.x5}.  Combining the above two inequalities, we get \eqref{e3.x7} as desired.
\end{proof}

Now we are able to prove Theorem \ref{thm1}.
 \begin{proof}[Proof of Theorem \ref{thm1}.]
 Since $|Du^\ez|\in L^\fz_\loc(U)$, by Corollary \ref{c3.x6} we have
$[|D u^\ez|^2+\ez]^{\frac{p-\gz}4}Du^\ez\in W^{1,2}_\loc(U)$ uniformly in $\ez>0$.
By the weakly compactness of $W^{1,2}_\loc(U)$, as $\ez\to0$ (up to some subsequence),
   $[|D u^\ez|^2+\ez]^{\frac{p-\gz}4}Du^\ez $ converges to some function $\vec v$   in $L^2_\loc(U)$
   and weakly in $W^{1,2}_\loc(U)$. Since $Du^\ez\to Du$ in $C^{0,\alpha}(U)$ for some $\alpha>0$, we get
  $ \vec v=|Du|^{\frac{p-\gz}2}Du$ and hence
 $$ \int_{B}|D[|Du|^{\frac{p-\gz}2}Du ]|^2\,dx\le C(n,p)\frac1{r^2}\int_{2B}|Du|^{p-\gz+2}\,dx\quad\forall B=B(z,r)\Subset 2B\Subset U$$
as desired.
\end{proof}

Below, we  give some remarks about  the Cordes condition.

\begin{rem} \label{plap-cor}\rm (i)
The $W^{2,q}_\loc$-regularity in  Corollary \ref{thm2} was proved  via the Cordes condition previously. Precisely, rewrite the equation \eqref{ap-plap} as
$$\sum_{1\le i,j\le n}a^\ez_{ij}u^\ez_{x_ix_j}=0\quad\mbox{in $U$};\ u^\ez=u\ \mbox{on $\partial U$},$$
where the  coefficients
$$a^\ez_{ij}=\dz_{ij}+(p-2)\frac{u^\ez_{x_i}u^\ez_{x_j}}{|Du^\ez|^2+\ez}.$$
If $1<p<3+\frac2{n-2}$, then $\{a^\ez_{ij}\}_{1\le i,j\le n}$ satisfies the Cordes condition uniformly in
  $\ez\in(0,1]$, that is, there exists an $\dz>0$ such that
 $$\sum_{i,j=1}^n(a^\ez_{ij})^2\le \frac1{n-1+\dz}\left(\sum_{i=1}^na^\ez_{ii}\right)^2 \quad  \mbox{ in $U$ for all   $\ez\in(0,1]$}.$$
Applying \cite[Theorem 1.2.1]{mps}, Manfredi-Weitzman \cite{mw88} showed that $u^\ez \in W^{2,2}_\loc(U)$ uniformly in $\ez>0$.
  Indeed, following \cite[Theorem 1.2.3]{mps} and the arguments in \cite{mw88} (see also \cite{ar}),
   one could  get the $u^\ez\in W^{2,q}$-regularity uniformly in $\ez>0$  for some $q>2$.
  Letting $\ez\to0$, one has $u\in W^{2,q}_\loc(\Omega)$.

  (ii)   The   Coders condition  is not valid for \eqref{zz1} in Corollary \ref{thm2}, and  also Theorem \ref{thm1} in general.
  \end{rem}

  We end this section by the following remark for \eqref{zz1} in Corollary \ref{thm2}.
  \begin{rem}\rm
  (i) Let  $n=2$. Note that for $v\in W^{2,2}_\loc$,
  one has   $$|D^2v|^2-(\Delta v)^2=- 2 \det D^2v \quad a.e.$$
For $1<p<\fz$, by \eqref{zz1}, and also by the property of harmonic functions when $p=2$,
 one  has
\begin{equation}\label{e8.03b} |D^2u|^2\le - \frac{(p-1)^2+1}{p-1}\det D^2u\quad a.e.
\end{equation}
whenever  $u$  is a planar $p$-harmonic function.
This  implies that the map $x\to Du(x)$ is quasi-regular, which was originally proved by \cite{bi87}.
 The constant in \eqref{e8.03b} is sharp. In fact, consider the boundary value problem $\Delta_p u=0$ in $B_1\subset {\mathbb R}^2$ with a boundary condition $u=\varphi$ on $\partial B_1$ which is even with respect to $x_2$. Then by the uniqueness of solutions, we know that $u$ is even in $x_2$, and thus $D_2 u=D_{12}u=0$ on $x_2=0$. Now by using the equation, it is easily seen that $D_{22} u=(1-p)D_{11} u$ on $x_2=0$, so that the equality in \eqref{e8.03b} holds.
Moreover,  in the limiting case $p=\fz$, it was shown in \cite{kzz} that
 $$ \mbox{$-\det D^2u$ is a nonnegative Radon measure and $|D|Du||^2 \le -\det D^2 u$}$$ whenever $u$ is a planar $\fz$-harmonic function.

(ii) Let $n\ge3$.
For $p\in(1,3+\frac2{n-2})$, by \eqref{zz1} and theory of harmonic functions,  we see that
$|D^2u|^2-(\Delta u)^2 $   is nonnegative.
Observe that
$$  \Delta(|Du|^2)-(u_{x_i}u_{x_j})_{x_ix_j} = {\rm div}(D^2uDu-\Delta uDu)=|D^2u|^2-(\Delta u)^2$$
in the sense of distributions. When $p=3+\frac2{n-2}$,
 we  expect that the  distributional  second order derivative  $ \Delta(|Du|^2)-(u_{x_i}u_{x_j})_{x_ix_j}  $ is a nonnegative Radon measure.
On the other hand,  when  $p=\fz$,  for the smooth
  $\fz$-harmonic function
$$w(x)=2^{\frac13}x_1^{\frac43}- x_2^{\frac43}- x_3^{\frac43}\quad\mbox{in the domain $(0,\fz)^3$,} $$
a direct calculation gives
$$  |D^2w|^2-(\Delta w)^2=  \frac{32}{81} 2^{\frac23}x_1^{-\frac23}[x_2^{-\frac23} +x_3^{-\frac23}  ]-\frac{32}{81} x_2^{-\frac23}  x_3^{-\frac23},$$
which changes sign when $x_1$ goes from $0$ to $\fz$.
Considering this,  we conjecture that for  some/all $p\in(3+\frac2{n-2},\fz)$,
there exists
 a  $p$-harmonic function $u\in W^{2,2}_\loc$
such that
 $|D^2u|^2-(\Delta u)^2$ changes sign.
 \end{rem}

\section{Proof of  Theorem \ref{thm3}}

To prove  Theorem \ref{thm3}, it suffices to show
that  for any viscosity solution $u=u(x,t)$ to \eqref{pn-plap}, we have
\begin{equation}\label{W22-pn-plap} \bint_{Q_r }   [|D^2u |+|u_t|]^2  \,dx\,dt \le C(n,p ) \inf_{{{\vec c}}\in\rn}\frac1{r^2}\bint_{Q_{2r} }
  |Du-{{\vec c}}|^{2}  \,dx\,dt\quad\forall Q_{r}   \subset   Q_{2r} \Subset \Omega_T.
  \end{equation}
   Indeed, let $v(x)=u(x)- c_0-c_ix_i$ where
   $$c_0=\bint_{ Q_{2r}}  u \,dx\,dt \quad\mbox{and}\quad  \vec c=(c_1,\ldots,c_n)=  \bint_ {Q_{2r} } Du \,dx\,dt.$$
   We have $Dv=Du- \vec c$, $D^2v=D^2u$ and $v_t=u_t$. By the parabolic Sobolev-Poincar\'e inequality,
   \begin{align*}\|Du-\vec c\|_{L^2(Q_{2r})}&=\|Dv\|_{L^2(Q_{2r})}\\
   &\le C\frac1r\|u - c_0-c_ix_i\|_{L^{\frac{2(n+2)}{n+4}}(Q_{2r})}+C\|Du- \vec c\|_{L^{\frac{2(n+2)}{n+4}}(Q_{2r})}\\
   &\quad+
  Cr \|u_t\|_{L^{\frac{2(n+2)}{n+4}}(Q_{2r})}+ Cr \|D^2u \|_{L^{\frac{2(n+2)}{n+4}}(Q_{2r})}\\
  &\le Cr \|u_t\|_{L^{\frac{2(n+2)}{n+4}}(Q_{2r})}+ Cr \|D^2u \|_{L^{\frac{2(n+2)}{n+4}}(Q_{2r})},
   \end{align*}
   where in the last inequality we used \cite[Lemma 5.4]{k07}.
  This gives
\begin{align*}
&\left(\bint_{Q_r}    [|D^2u | +|u_t|]^2\,dx \,dt \right)^{1/2}\\
&\quad\le C(n,p )  \left( \bint_{{ Q_{2r}}}
  [|D^2u|+|u_t|]^{\frac{2(n+2)}{n+4}}  \,dx\,dt\right)^{\frac{n+4}{2(n+2)}}\quad\forall Q_{r}  \subset  Q_{2r} \Subset \Omega_T.
\end{align*}
 Since $\frac{2(n+2)}{n+4}<2$, by Gehring's lemma, there exists a $\dz_{n,p}>0$ such that
 $|D^2u|+|u_t|\in L^q_\loc(\Omega_T)$ for any $q<2+\dz_{n,p}$, and moreover, we have
 $$\left(\bint_{Q_r}   [|D^2u | +|u_t|]^q\,dx \,dt \right)^{1/q}\le C(n,p,q)  \left( \bint_{{ Q_{2r}}}
  [|D^2u|+|u_t|]^2  \,dx\,dt\right)^{\frac12} \quad\forall Q_{r}  \subset  Q_{2r} \Subset \Omega_T,$$
  which gives  \eqref{W2q-pn-plap} as desired.

To prove \eqref{W22-pn-plap}, given any fixed  smooth domain $U\Subset \Omega$, and for  $\ez\in(0,1]$,
let $u^\ez\in  C^0({ \overline {U_T}})$  be a viscosity  solution to   the regularized   equation
\eqref{ap-pn-plap}.
By the parabolic theory, we know that $u^\ez\in C^\fz(U_T )\cap C^0(\overline{U_T})$,
$Du^\ez\in L^\fz(U_T)$ uniformly in $\ez>0$ and $u^\ez\to u$ in $C^0(U_T)$  as $\ez\to0$; see \cite{js}.

{ In the sequel, without loss of generality, we assume that $Q_r=Q(0,r)\subset Q_{2r}\Subset U_T$. Take a smooth function $\phi\in C_c^\infty(B_{2r}\times (-4r^2,4r^2))$ such that
$$
\phi\equiv 1 \,\, \text{in}\,\,Q_r,\quad |\phi|\le 1,\quad |D\phi|\le \frac Cr,
\quad |D^2\phi|+|\phi_t|\le \frac C{r^2}.
$$}
Applying Lemma \ref{keylem}, we   prove the following.
The proof is postponed to the end of this section due to a technical reason.
\begin{lem}\label{du-cpll}
If   $n\ge 2$ and $1<p<3+\frac{2}{n-2}$, then we have
\begin{align}\label{xx1}
&\int_{{ Q_{2r}}}|D^2u^\ez|^2\phi^2\,dx\,dt+ \int_{{ Q_{2r}}}(u^\ez_t)^2\phi^2\,dx\,dt\nonumber\\
&\quad\le C(n,p)\inf_{{\vec c}\in\rn}\int_{{ Q_{2r}}}|Du^\ez-{{\vec c}}|^2[|D\phi|^2+|\phi||D^2\phi|+|\phi||\phi_t|]\,dx\,dt\nonumber\\
&\quad\quad+C(n,p)\ez\int_{{ Q_{2r}}}[1+|\ln[|D u^\ez|^2+\ez]|][|D\phi|^2+|\phi||\phi_t|]\,dx\,dt\nonumber\\
&\quad\quad+{ C(n,p)\ez\int_{B_{2r}}|\ln[|D u^\ez(x,0)|^2+\ez]|\phi^2(x,0)\,dx}.
\end{align}
\end{lem}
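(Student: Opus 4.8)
The plan is to derive the pointwise inequality stated after \eqref{ap-pn-plap} (call it $(\star)$) by applying Lemma~\ref{keylem} to the smooth approximation $u^\ez$, then to multiply by $\phi^2$ and integrate over $U_T$, and finally to control the two ``bad'' terms — the parabolic term $\frac{1}{p-2}\Delta u^\ez\, u^\ez_t$ and the approximation term $-\frac{\ez}{2}\frac{|D^2u^\ez|^2-(\Delta u^\ez)^2}{|Du^\ez|^2+\ez}$ — so that everything can be absorbed into the left side. Concretely: from the non-divergence form of \eqref{ap-pn-plap} we have $\Delta u^\ez + (p-2)\frac{\Delta_\fz u^\ez}{|Du^\ez|^2+\ez} = u^\ez_t$, which lets us rewrite $\Delta u^\ez$ and $\Delta_\fz u^\ez$ in terms of $u^\ez_t$ modulo $|Du^\ez|^2$-factors. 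Dividing $(\star)$ by $|Du^\ez|^2+\ez$ after applying \eqref{keyineqx}, as in the elliptic proof (see the derivation of \eqref{e3.z1}), yields $(\star)$. The condition $1<p<3+\frac{2}{n-2}$ guarantees $\frac{1}{p-2}-\frac{n-2}{2}>0$, so after integrating and moving the $\frac{n-1}{2}[|D^2u^\ez|^2-(\Delta u^\ez)^2]$ term to the left via Lemma~\ref{div} (applied to $v=u^\ez$, with an arbitrary constant vector $\vec c$ — this is exactly why the infimum over $\vec c$ appears), one obtains a coercive quadratic form in $|D^2u^\ez|$ on the left, plus the two bad terms.

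The treatment of $\frac{1}{p-2}\Delta u^\ez\, u^\ez_t$ is the heart of the matter and will need to be split into cases as the authors announce (Sections 4.1 and 4.2). The key algebraic fact is that $u^\ez_t$ is essentially a lower-order quantity once we have $L^2$ control on $D^2u^\ez$: by the equation, $|u^\ez_t| \le |\Delta u^\ez| + |p-2|\,\frac{|\Delta_\fz u^\ez|}{|Du^\ez|^2+\ez} \le C(n,p)|D^2u^\ez|$ pointwise (using $|\Delta_\fz u^\ez|\le |D^2u^\ez Du^\ez|\,|Du^\ez|\le |D^2u^\ez||Du^\ez|^2$, so the normalized term is $\le|D^2u^\ez|$). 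Hence $\int (u^\ez_t)^2\phi^2 \le C\int |D^2u^\ez|^2\phi^2$, and the term $\int \Delta u^\ez\, u^\ez_t\,\phi^2$ is bounded by $C\int|D^2u^\ez|^2\phi^2$ — but that is too crude to absorb. The refined idea is to integrate by parts in $t$: writing $\Delta u^\ez\, u^\ez_t\,\phi^2 = \mathrm{div}(\ldots)_t$-type manipulations, or more precisely using $\int \Delta u^\ez\, u^\ez_t\,\phi^2\,dx\,dt = -\int Du^\ez\cdot D(u^\ez_t)\,\phi^2 - 2\int Du^\ez\cdot D\phi\, u^\ez_t\,\phi$ and then $\int Du^\ez\cdot D(u^\ez_t)\phi^2 = \frac12\frac{d}{dt}\int|Du^\ez|^2\phi^2 - \ldots$, one trades the troublesome product for time-boundary terms (evaluated at $t=0$, which is why the last line of \eqref{xx1} carries a $B_{2r}\times\{0\}$ integral) plus terms with $|D\phi|$, $|\phi_t|$ that are harmless. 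I would carry this out carefully, being mindful that one should subtract the constant vector $\vec c$ from $Du^\ez$ throughout to get the $|Du^\ez-\vec c|^2$ weights. The distinction between the two cases in Sections~4.1--4.2 presumably corresponds to whether $p$ is close to the endpoint $3+\frac{2}{n-2}$ (where the coercivity constant degenerates and one must be economical) versus the bulk range, or to a dichotomy in how large $|Du^\ez|$ is relative to $\sqrt{\ez}$.

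The approximation term $-\frac{\ez}{2}\frac{|D^2u^\ez|^2-(\Delta u^\ez)^2}{|Du^\ez|^2+\ez}$ is handled by exploiting the divergence structure once more: $|D^2u^\ez|^2-(\Delta u^\ez)^2 = \mathrm{div}(D^2u^\ez Du^\ez - \Delta u^\ez Du^\ez)$, and $\frac{\ez}{|Du^\ez|^2+\ez}$ is, up to the factor $\ez$, the derivative structure $D\log(|Du^\ez|^2+\ez)$ paired against the vector field. Integrating by parts produces terms of the form $\ez\int |D^2u^\ez Du^\ez|\,\frac{|Du^\ez|}{|Du^\ez|^2+\ez}\cdots$ which are pointwise $O(\ez^{1/2}|D^2u^\ez|)$, hence by Cauchy--Schwarz absorbable with an $o(1)$ loss — this is the source of the $\ez\int[1+|\ln[|Du^\ez|^2+\ez]|][|D\phi|^2+|\phi||\phi_t|]$ term in \eqref{xx1} (the logarithm appears as the antiderivative of $\frac{\ez}{|Du^\ez|^2+\ez}$ along the gradient flow of $|Du^\ez|^2$, via $\int_0^{|Du^\ez|^2}\frac{\ez}{s+\ez}\,ds = \ez\ln(|Du^\ez|^2+\ez)-\ez\ln\ez$). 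I expect the main obstacle to be the bookkeeping in the integration-by-parts for the parabolic term: one must ensure that no uncontrolled $t$-boundary contribution survives except the stated $\phi^2(x,0)$ integral, and that the constant vector $\vec c$ can be inserted consistently so that only $|Du^\ez-\vec c|^2$ (not $|Du^\ez|^2$) appears in the first term on the right of \eqref{xx1} — this is what ultimately, after sending $\ez\to 0$ and invoking the parabolic Sobolev--Poincaré inequality plus Gehring, yields the scale-invariant estimate \eqref{W22-pn-plap}. A secondary technical point is justifying all these integrations by parts for the smooth-but-not-compactly-supported $u^\ez$; this is routine given $Du^\ez\in L^\infty(U_T)$ uniformly and $\phi\in C_c^\infty$, but must be stated.
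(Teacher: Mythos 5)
Your high-level skeleton — apply Lemma~\ref{keylem} to $u^\ez$, use \eqref{ap-pn-plap} to substitute, integrate against $\phi^2$, push the $[|D^2u^\ez|^2-(\Delta u^\ez)^2]$ term through Lemma~\ref{div}, integrate by parts in $t$ for $\Delta u^\ez\,u^\ez_t$, and handle the $\ez$-term via the same divergence structure — matches the paper's proof. However, there is a genuine gap in your treatment of the coercivity, and your description of the case split is wrong in a way that masks this gap.

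You claim ``the condition $1<p<3+\frac{2}{n-2}$ guarantees $\frac{1}{p-2}-\frac{n-2}{2}>0$,'' and you propose to run the absorption directly off \eqref{pi-pn-plap}, whose left side carries that coefficient. This is false: $\frac{1}{p-2}-\frac{n-2}{2}>0$ requires $p<2+\frac{2}{n-2}$, a strictly smaller range (e.g.\ $n=4$, $p=3.5$: the coefficient is negative). The crucial algebraic step you are missing is that \eqref{pi-pn-plap} is \emph{not} used as is. One first applies $\frac{|D^2u^\ez Du^\ez|^2}{|Du^\ez|^2+\ez}\ge\left[\frac{\Delta_\fz u^\ez}{|Du^\ez|^2+\ez}\right]^2$, then substitutes $\frac{\Delta_\fz u^\ez}{|Du^\ez|^2+\ez}=\frac{u^\ez_t-\Delta u^\ez}{p-2}$ from the equation and expands the square, and finally adds $[\frac{n}{2(p-2)^2}+\frac{1}{p-2}-\frac{n-2}{2}][|D^2u^\ez|^2-(\Delta u^\ez)^2]$ to both sides. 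This produces \eqref{lpden2}, whose coefficient in front of $|D^2u^\ez|^2$ is $\frac{n}{2(p-2)^2}+\frac{1}{p-2}-\frac{n-2}{2}$ — and \emph{that} quantity is positive precisely for $1<p<3+\frac{2}{n-2}$. It also produces a good $(u^\ez_t)^2$-term on the left with coefficient $\frac{n}{2(p-2)^2}$. Without this manipulation your argument cannot reach the stated range of $p$.

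Two further points. First, the case split is not (as you speculate) about proximity to the endpoint or about $|Du^\ez|$ vs.\ $\sqrt\ez$; it is $1<p<\min\{6,3+\frac{2}{n-2}\}$ versus ($n=2$ and $p\ge6$). The split is forced because, after absorbing the $\ez$-term via Lemma~\ref{ez-n}, the $(u^\ez_t)^2$-coefficient becomes $\frac{n}{2(p-2)^2}-\frac{1}{4(p-1)}$, which is only positive for $p<n+2+\sqrt{n(n+2)}$; for $n\ge3$ that bound exceeds $3+\frac{2}{n-2}$, but for $n=2$ it equals $6$, forcing a separate argument for $n=2$, $p\ge6$ built from \eqref{pi-pn-plap} and a more delicate treatment of $\Delta u^\ez\,u^\ez_t$ (Lemmas~\ref{du-cpll-2-1}, \ref{du-cpll-2-2}), which is also where the log terms genuinely enter. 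Second, you did not mention the dummy-variable device needed at the end of Case~1: the lower endpoint $p=1+1/(n+1+\sqrt{n(n+2)})$ from the $(u^\ez_t)^2$-coefficient is removed by observing that a solution in $\rn$ is also a solution in $\rr^m$ for all $m\ge n$, so one may take a union over $m$. Your proof, as written, would therefore establish \eqref{xx1} only on a proper subinterval of $(1,3+\frac{2}{n-2})$.
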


{ Given} Lemma \ref{du-cpll}, we prove  Theorem \ref{thm3} as below.
\begin{proof}[Proof of Theorem \ref{thm3}]
  Lemma \ref{du-cpll}, together with $Du^\ez\in L^\fz(U_T)$ uniformly in $\ez>0$, implies that
$D^2 u^\ez,u^\ez_t\in L^2_\loc(U_T)$ uniformly in $\ez>0$. By  the compact parabolic embedding theorem,
$Du^\ez\to D u$ in $L^2_\loc(U_T)$,  and $D^2u^\ez\to D^2u$ and $u^\ez_t\to u_t$ weakly in $L^2_\loc(U_T)$ as $\ez\to0$.
Letting $\ez\to0$, we conclude \eqref{W22-pn-plap} from Lemma \ref{du-cpll} and arbitrariness of $U_T$.
So  we finish the proof of Theorem \ref{thm3}.
\end{proof}

Before we prove Lemma \ref{du-cpll}, we give a remark for parabolic Coders condition.

\begin{rem}\label{pn-plap-cor}\rm
Rewrite the equation \eqref{ap-pn-plap} as
$$u^\ez_t-\sum_{1\le i,j\le n}a^\ez_{ij}u^\ez_{x_ix_j}=0\quad\mbox{in $U_T$};\ u^\ez=u\ \mbox{on $\partial_ p  U_T$},$$
where the principle  coefficients
$$a^\ez_{ij}=\dz_{ij}+(p-2)\frac{u^\ez_{x_i}u^\ez_{x_j}}{|Du^\ez|^2+\ez}.$$
If $1<p<3+\frac2{n-1}$, then $\{a^\ez_{ij}\}_{1\le i,j\le n}$ satisfies  the parabolic Cordes condition (see e.g.  \cite[(1.106)]{mps}) uniformly in
  $\ez\in(0,1]$, that is,    there exists  $\dz>0$ such that
\begin{equation}\label{parcordes}\sum_{i,j=1}^n( a_{ij}^\epsilon )^2+1 \le \frac1{n+\dz}\left( \sum_{i=1}^n a_{ii}^\epsilon +1\right)^2\quad \mbox{  in $U_T$.}\end{equation}
But, if $ p\ge 3+\frac2{n-1}$,   \eqref{parcordes} does not necessarily holds.
Thus, only when  $1<p<3+\frac2{n-1}$, one may get the
   $W^{2,2}_\loc$-regularity in the spatial variables  and  the $W^{1,2}_\loc$-regularity in the time variable through the parabolic Cordes condition. However, even in this case a rigorous argument  cannot be found in the literature.
  \end{rem}

Finally, we prove Lemma \ref{du-cpll}. First,
 applying Lemma \ref{keylem} to $u^\ez$, we have
\begin{align*}  &   \frac n2{ |D^2 u^\ez D u^\ez|^2} -  {\Delta u^\ez \Delta_\fz u^\ez } -\frac{n-2}2
  (\Delta u^\ez)^2|Du^\ez|^2\le \frac{n-1}2[|D^2u^\ez|^2-(\Delta u^\ez)^2]|Du^\ez|^2.
 \end{align*}
 Dividing both sides by $|Du^\ez|^2+\ez$ and using \eqref{ap-pn-plap} we obtain
  \begin{align}\label{pi-pn-plap}
&\frac{n}{2}\frac{|D^2u^\ez Du^\ez|^2}{|Du^\ez|^2+\ez}+[\frac{1}{p-2}
-\frac{n-2}{2}](\Delta u^\ez)^2
\nonumber\\
&\quad\le\frac{n-1}{2}[|D^2u^\ez|^2-(\Delta u^\ez)^2]+\frac{\ez}{2}\frac{(\Delta u^\ez)^2-|D^2u^\ez|^2}{|Du^\ez|^2+\ez}+ \frac{\Delta u^\ez u^\ez_t}{p-2}.
\end{align}
Moreover, since
\begin{align*}
\frac{|D^2u^\ez Du^\ez|^2}{|Du^\ez|^2+\ez}\ge[\frac{\Delta_\fz u^\ez}{|Du^\ez|^2+\ez}]^2=[-\frac{\Delta u^\ez}{p-2}+\frac{u^\ez_t}{p-2}
]^2=\frac{(\Delta u^\ez)^2}{(p-2)^2}+\frac{(u^\ez_t)^2}{(p-2)^2}-\frac{2\Delta u^\ez u^\ez_t}{(p-2)^2},
\end{align*}
\eqref{pi-pn-plap} leads to
\begin{align*}
 &[\frac n2 \frac1{(p-2)^2}+\frac{1}{p-2}
-\frac{n-2}{2}] (\Delta u^\ez)^2
+  \frac n2 \frac1{(p-2)^2}  ( u^\ez_t)^2\\
&\quad\le \frac{n-1}{2} [|D^2u^\ez|^2-(\Delta u^\ez)^2] +\frac{\ez}{2} \frac{(\Delta u^\ez)^2-|D^2u^\ez|^2}{|Du^\ez|^2+\ez}+[\frac{1}{p-2}+ \frac {n}{(p-2)^2}] \Delta u^\ez u^\ez_t.
\end{align*}
Adding both sides by
$$
[\frac n2 \frac1{(p-2)^2}+\frac{1}{p-2}
-\frac{n-2}{2}] [|D^2 u^\ez|^2-(\Delta u^\ez)^2]$$
we conclude that
\begin{align}\label{lpden2}
 &[\frac n2 \frac1{(p-2)^2}+\frac{1}{p-2}
-\frac{n-2}{2}] |D^2 u^\ez|^2
+  \frac n2 \frac1{(p-2)^2} ( u^\ez_t)^2
 \nonumber\\
&\quad\le[ \frac n2 \frac1{(p-2)^2}+\frac{1}{p-2}
+\frac{1}{2}]  [|D^2u^\ez|^2-(\Delta u^\ez)^2] \nonumber\\
&\quad\quad +\frac{\ez}{2} \frac{(\Delta u^\ez)^2-|D^2u^\ez|^2}{|Du^\ez|^2+\ez}+ [\frac{1}{p-2}+ \frac {n}{(p-2)^2}] \Delta u^\ez u^\ez_t.
\end{align}

By a parabolic version of Lemma \ref{div}, the integration of the first term on the right-hand sides of  \eqref{pi-pn-plap} and \eqref{lpden2}  with a test function can be handled as before.
But additional efforts are needed to treat
the integration of the second and third terms  on the right-hand sides in  \eqref{pi-pn-plap} and \eqref{lpden2} with a test function. Unlike \eqref{pi-plapx} and \eqref{alden1}, since we cannot divide by $|Du^\ez|^2$ here due to its possible vanishing, the additional second term on the right-hand sides in  \eqref{pi-pn-plap} and \eqref{lpden2}
always appear.

 Below we consider 2 cases:
\begin{enumerate}
\item[$\bullet$]
Case $1<p<\min\{6,3+\frac2{n-2}\}$. In this case  we use \eqref{lpden2} to prove \eqref{xx1}.  Note that when $n\ge 3$, we always have $3+\frac2{n-2}<6$.

\item[$\bullet$] Case $n=2$ and $p\ge 6$.  In this case  we use \eqref{pi-pn-plap} to prove \eqref{xx1}.
\end{enumerate}

\subsection{Case  $1<p<\min\{6,3+\frac2{n-2}\}$.}

Via a direct calculation we have the following.
\begin{lem} \label{vt-n} Let $p\in(1,2)\cup(2,\fz)$.
For any ${{\vec c}}\in\rn$,  we have
\begin{align}\label{vt-n-1}
 \int_{{ Q_{2r}}}\Delta u^\ez u^\ez _t\phi^2\,dx\,dt
&\le  \eta\int_{Q_{2r}}  |  D^2u^\ez |^2 \phi^2\,dx\,dt+ \frac  C \eta\int_{Q_{2r}}|Du^\ez-{{\vec c}}|^2[|D\phi|^2+ |\phi \phi_t|]\,dx\,dt.
\end{align}
\end{lem}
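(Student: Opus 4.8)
\textbf{Proof plan for Lemma \ref{vt-n}.} The plan is to move one spatial derivative off $\Delta u^\ez$ by integration by parts, notice that the ``diagonal'' term is a pure time derivative of $|Du^\ez-\vec c|^2$, and deal with the remaining commutator against the cut-off by invoking the equation \eqref{ap-pn-plap} together with Young's inequality. Since $\vec c\in\rn$ is constant, $\Delta u^\ez={\rm div}(Du^\ez-\vec c)$, so integrating by parts in $x$ (no boundary terms, as $\phi(\cdot,t)\in C_c^\fz(B_{2r})$) gives
$$\int_{Q_{2r}}\Delta u^\ez\,u^\ez_t\,\phi^2\,dx\,dt=-\int_{Q_{2r}}(Du^\ez-\vec c)\cdot Du^\ez_t\,\phi^2\,dx\,dt-2\int_{Q_{2r}}u^\ez_t\,[(Du^\ez-\vec c)\cdot D\phi]\,\phi\,dx\,dt.$$

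For the first term I would use $(Du^\ez-\vec c)\cdot Du^\ez_t=\tfrac12\partial_t|Du^\ez-\vec c|^2$ and integrate by parts in $t$; the interior contribution is $\int_{Q_{2r}}|Du^\ez-\vec c|^2\phi\phi_t$, and the only boundary contribution (at the top time level) equals $-\tfrac12\int_{B_{2r}}|Du^\ez-\vec c|^2\phi^2\,dx\le0$, which can simply be discarded. This already bounds the first term by $\int_{Q_{2r}}|Du^\ez-\vec c|^2|\phi\phi_t|\,dx\,dt$, in the desired form and with no $|\vec c|$-dependence. For the commutator term I would substitute $u^\ez_t=\Delta u^\ez+(p-2)\tfrac{\Delta_\fz u^\ez}{|Du^\ez|^2+\ez}$ from \eqref{ap-pn-plap}; since $|\Delta u^\ez|\le\sqrt n\,|D^2u^\ez|$ and $\big|\tfrac{\Delta_\fz u^\ez}{|Du^\ez|^2+\ez}\big|\le\tfrac{|D^2u^\ez||Du^\ez|^2}{|Du^\ez|^2+\ez}\le|D^2u^\ez|$, the integrand is dominated by $C(n,p)\,|D^2u^\ez|\,|Du^\ez-\vec c|\,|D\phi|\,|\phi|$, and Young's inequality (pairing $|D^2u^\ez||\phi|$ against $|Du^\ez-\vec c||D\phi|$ with weight $\eta$) bounds it by $\eta\int|D^2u^\ez|^2\phi^2+\tfrac{C(n,p)}{\eta}\int|Du^\ez-\vec c|^2|D\phi|^2$. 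Adding the two estimates yields \eqref{vt-n-1}.

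The single genuine obstacle is the commutator term $-2\int u^\ez_t\,[(Du^\ez-\vec c)\cdot D\phi]\,\phi$: further integration by parts merely recycles it, and estimating it naively by Young's inequality would leave $\int_{Q_{2r}}(u^\ez_t)^2\phi^2$ on the right-hand side, which is not permitted. Replacing $u^\ez_t$ via \eqref{ap-pn-plap} is precisely what converts this bad factor into $D^2u^\ez$, after which it can be absorbed into the leading term with an arbitrarily small constant. Two bookkeeping points deserve care: first, keeping all constants depending only on $n$ and $|p-2|$ (so that nothing blows up as $p\to2$); and second, writing $\Delta u^\ez={\rm div}(Du^\ez-\vec c)$ at the very start, rather than inserting $\vec c$ afterwards, which is what produces the clean first-order right-hand side $|Du^\ez-\vec c|^2[|D\phi|^2+|\phi\phi_t|]$ without any leftover $|\vec c|$-dependent terms.
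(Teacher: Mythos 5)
Your proof is correct and follows essentially the same route as the paper's: integrate by parts in $x$ writing $\Delta u^\ez={\rm div}(Du^\ez-\vec c)$, recognize the first term as $\tfrac12\partial_t|Du^\ez-\vec c|^2$ and integrate by parts in $t$ (discarding the sign-favorable boundary term at the top time level), and for the cutoff term use the equation to bound $|u^\ez_t|\le C(n,p)|D^2u^\ez|$ together with Young's inequality. The only cosmetic difference is that the paper applies Young's inequality first and then replaces $|u^\ez_t|$ by $|D^2u^\ez|$ via the equation, whereas you substitute the equation first; these are trivially equivalent.
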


\begin{proof}
By integration by parts we have
\begin{align*}
\int_{Q_{2r}}\Delta u^\ez   u^\ez  _t\phi^2\,dx\,dt&=\int_{Q_{2r}}(u^\ez  _{x_i}-c_i)_{x_i} u^\ez  _t\phi^2\,dx\,dt\\
&=-\int_{Q_{2r}}(u^\ez  _{x_i}-c_i) u^\ez  _{x_it}\phi^2\,dx\,dt
-2\int_{Q_{2r}}(u^\ez  _{x_i}-c_i) u^\ez  _{t}\phi_{x_i}\phi\,dx\,dt.
\end{align*}
 Further integration by parts gives
$$-\int_{Q_{2r}}(u^\ez  _{x_i}-c_i) u^\ez  _{x_it}\phi^2\,dx\,dt=
-\frac{1}{2}\int_{Q_{2r}}(|Du^\ez-{{\vec c}}|^2)_t\phi^2\,dx\,dt
{ \le} \int_{Q_{2r}}|Du^\ez-{{\vec c}}|^2|\phi||\phi_t|\,dx\,dt$$
and
\begin{align*}-2\int_{Q_{2r}}(u^\ez  _{x_i}-c_i) u^\ez  _{t}\phi_{x_i}\phi\,dx\,dt
&\le 2\int_{Q_{2r}}|Du^\ez-{{\vec c}}| |u^\ez _t ||D\phi||\phi|\,dx\,dt\\
&\le \frac1{\eta}
\int_{Q_{2r}}|Du^\ez-\vec c|^2|D\phi|^2\,dx\,dt+\eta \int_{Q_{2r}} |u^\ez _t |^2\phi^2\,dx\,dt.
\end{align*}
 By the equation \eqref{ap-pn-plap},  we have
\begin{equation}
                                        \label{eq8.26}
|u^\ez_t|\le |\Delta u^\ez|+|p-2||D^2u^\ez|\le (p+n)|D^2u^\ez|.
\end{equation}
 Combining  all the estimations, we have \eqref{vt-n-1} as desired.
\end{proof}
Using this and the divergence structure of $[|D^2u^\ez|^2-(\Delta u^\ez)^2]$, we further have the following.
\begin{lem} \label{ez-n} Let  $ p\in(1,2)\cup(2,\fz)$. For any   $\eta\in(0,1)$, we have
\begin{align}\label{ez-n-1}
& \frac{\ez}{2}\int_{Q_{2r}}\frac{(\Delta u^\ez)^2-|D^2u^\ez|^2}{|Du^\ez|^2+\ez}\phi^2\,dx\,dt\nonumber\\
& \quad\le \frac{1}{ 4(p-1) }\int_{Q_{2r}}(u^\ez_t)^2\phi^2\,dx\,dt+ \eta  \int_{Q_{2r}} |D^2u^\ez|^2 \phi^2\,dx\,dt +
 C(\eta)\ez  \int_{Q_{2r}} |D\phi|^2\,dx\,dt.
\end{align}
\end{lem}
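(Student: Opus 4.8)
\emph{Proof plan for Lemma~\ref{ez-n}.} The strategy is to undo the $\ez$--weighted integral by one integration by parts in the spatial variables, using the divergence structure \eqref{e8.03} of $|D^2u^\ez|^2-(\Delta u^\ez)^2$, and then to exploit the equation \eqref{ap-pn-plap} so as to close a square. Throughout put $w:=|Du^\ez|^2+\ez$, so that $\ez/w\le1$ and $|Du^\ez|\le\sqrt w$; note that, unlike in the elliptic case, $|D^2u^\ez|^2-(\Delta u^\ez)^2$ need not be pointwise nonnegative, so the estimate is genuinely nontrivial. Since $(\Delta u^\ez)^2-|D^2u^\ez|^2=-{\rm div}(D^2u^\ez Du^\ez-\Delta u^\ez Du^\ez)$ and, using $Dw=2D^2u^\ez Du^\ez$, $D(\phi^2/w)=\tfrac{2\phi D\phi}{w}-\tfrac{2\phi^2 D^2u^\ez Du^\ez}{w^2}$, one integration by parts in $x$ (with $t$ a spectator and $\phi$ of compact $x$--support, so no boundary terms) gives
\begin{align*}
\frac{\ez}{2}\int_{Q_{2r}}\frac{(\Delta u^\ez)^2-|D^2u^\ez|^2}{w}\phi^2\,dx\,dt
&=\ez\int_{Q_{2r}}\frac{(D^2u^\ez Du^\ez-\Delta u^\ez Du^\ez)\cdot D\phi}{w}\,\phi\,dx\,dt\\
&\quad-\ez\int_{Q_{2r}}\frac{|D^2u^\ez Du^\ez|^2-\Delta u^\ez\,\Delta_\fz u^\ez}{w^2}\,\phi^2\,dx\,dt
=:\mathrm{I}+\mathrm{II}.
\end{align*}

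For $\mathrm{I}$ I would bound $|D^2u^\ez Du^\ez-\Delta u^\ez Du^\ez|\le(1+\sqrt n)\,|D^2u^\ez|\,|Du^\ez|\le(1+\sqrt n)\,|D^2u^\ez|\sqrt w$, so that the weight collapses: $\tfrac{\ez}{w}\sqrt w=\tfrac{\ez}{\sqrt w}\le\sqrt\ez$. Young's inequality then yields
$$\mathrm{I}\le\eta\int_{Q_{2r}}|D^2u^\ez|^2\phi^2\,dx\,dt+C(n,\eta)\,\ez\int_{Q_{2r}}|D\phi|^2\,dx\,dt,$$
which is already of the form asserted in \eqref{ez-n-1}.

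The crux is $\mathrm{II}$. Set $s:=\Delta_\fz u^\ez/w$, so that \eqref{ap-pn-plap} reads $s=(u^\ez_t-\Delta u^\ez)/(p-2)$, i.e.\ $\Delta u^\ez=u^\ez_t-(p-2)s$, and by Cauchy--Schwarz $|D^2u^\ez Du^\ez|^2\ge(\Delta_\fz u^\ez)^2/|Du^\ez|^2\ge(\Delta_\fz u^\ez)^2/w=s^2w$. Hence the (at first sight unhelpful) negative term obeys $-\ez\,|D^2u^\ez Du^\ez|^2/w^2\le-\ez\,s^2/w$, while the cross term is \emph{exactly} $\ez\,\Delta u^\ez\Delta_\fz u^\ez/w^2=\ez\,(s\,u^\ez_t-(p-2)s^2)/w$. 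Adding these,
$$\mathrm{II}\le\ez\int_{Q_{2r}}\frac{s\,u^\ez_t-(p-1)s^2}{w}\,\phi^2\,dx\,dt,$$
and since $p-1>0$, Young's inequality with the sharp parameter $p-1$, $s\,u^\ez_t\le(p-1)s^2+\tfrac{1}{4(p-1)}(u^\ez_t)^2$, gives the pointwise bound $s\,u^\ez_t-(p-1)s^2\le\tfrac{1}{4(p-1)}(u^\ez_t)^2$; using $\ez/w\le1$ once more,
$$\mathrm{II}\le\frac{1}{4(p-1)}\int_{Q_{2r}}(u^\ez_t)^2\phi^2\,dx\,dt.$$
Combining the estimates for $\mathrm{I}$ and $\mathrm{II}$ proves \eqref{ez-n-1}.

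The delicate point is the bookkeeping in $\mathrm{II}$: one has to recognize that the term $-\ez|D^2u^\ez Du^\ez|^2/w^2$ must not be discarded but supplies precisely the coefficient $-(p-1)$ of $s^2$ that is needed to complete the square after the PDE substitution, and that no slack can be wasted in $\ez/w\le1$ — this is exactly what pins down the constant $\tfrac{1}{4(p-1)}$. One should also verify that all identities and the choice of $s$ are valid irrespective of the sign of $p-2$, so that a single argument handles both $p\in(1,2)$ and $p>2$, and that the integrations by parts are legitimate because $u^\ez\in C^\fz$ in $x$ and $\phi$ has compact $x$--support (no initial--time term appears since no $t$--derivative is integrated).
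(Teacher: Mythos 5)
Your proof is correct and follows essentially the same route as the paper: integration by parts via the divergence identity \eqref{e8.03}, Cauchy--Schwarz to replace $|D^2u^\ez Du^\ez|^2$ by $(\Delta_\fz u^\ez)^2/(|Du^\ez|^2+\ez)$, substitution from the regularized equation \eqref{ap-pn-plap}, and completion of a square to pin down the constant $\tfrac{1}{4(p-1)}$. The only cosmetic difference is that you name the intermediary $s=\Delta_\fz u^\ez/(|Du^\ez|^2+\ez)$ whereas the paper carries out the same algebra directly in terms of $\Delta u^\ez$ and $u^\ez_t$.
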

\begin{proof}[ Proof of Lemma \ref{ez-n}.]

By integration by parts, we obtain
\begin{align*}
&\frac{\ez}{2}\int_{Q_{2r}}\frac{ (\Delta u^\ez)^2-|D^2u^\ez|^2 }{|Du^\ez|^2+\ez}\phi^2\,dx\,dt\\
&\quad=-\frac{\ez}{2}\int_{Q_{2r}}[\Delta u^\ez u^\ez_{x_i}-u^\ez_{x_ix_j}u^\ez_{x_j}]
\left(\frac{\phi^2}{|Du^\ez|^2+\ez}\right)_{x_i}\,dx\,dt\\
&\quad=\ez\int_{Q_{2r}} \left(\Delta u^\ez \frac{\Delta_\fz u^\ez}{[|Du^\ez|^2+\ez]^2}-\frac{|D^2u^\ez Du^\ez|^2}{[|Du^\ez|^2+\ez]^2}\right) \phi^2\,dx\,dt
\\
&\quad\quad-\ez \int_{Q_{2r}}[\Delta u^\ez u^\ez_{x_i}-u^\ez_{x_ix_j}u^\ez_{x_j}]\phi_{x_i}\phi
\frac{1}{|Du^\ez|^2+\ez}\,dx\,dt.
\end{align*}
By Young's inequality we obtain
\begin{align*}
&\ez \int_{Q_{2r}}[\Delta u^\ez u^\ez_{x_i}-u^\ez_{x_ix_j}u^\ez_{x_j}]\phi_{x_i}\phi
\frac{1}{|Du^\ez|^2+\ez}\,dx\,dt\\
&\quad\le C\ez^{1/2}\int_{Q_{2r}}|D^2u^\ez||\phi D\phi|\,dx\,dt\\
 &\quad\le \eta  \int_{Q_{2r}} |D^2u^\ez|^2 \phi^2\,dx\,dt +
 C(\eta)\ez  \int_{Q_{2r}} |D\phi|^2\,dx\,dt.
\end{align*}
By  H\"older's inequality,  \eqref{ap-pn-plap},
 and Young's inequality one has
 \begin{align*}
&\ez\left(\Delta u^\ez \frac{\Delta_\fz u^\ez}{[|Du^\ez|^2+\ez]^2}-\frac{|D^2u^\ez Du^\ez|^2}{[|Du^\ez|^2+\ez]^2}\right)\\
&\quad\le \ez\left(\Delta u^\ez \frac{\Delta_\fz u^\ez}{[|Du^\ez|^2+\ez]^2}-\frac{(\Delta_\fz u^\ez)^2}{[|Du^\ez|^2+\ez]^3}\right)\\
&\quad=\frac{\ez}{(p-2)^2}\frac{1}{|Du^\ez|^2+\ez}
\left[(p-2)(\Delta u^\ez u_t-(\Delta u^\ez)^2)-(u_t^\ez-\Delta u^\ez)^2\right]\\
&\quad= \frac{\ez}{(p-2)^2}\frac{1}{|Du^\ez|^2+\ez}
\left[p\Delta u^\ez u_t -(p-1)(\Delta u^\ez)^2-(u_t^\ez)^2\right]\\
&\quad\le \frac{1}{4(p-1)}(u_t^\ez)^2.
\end{align*}
Combining all estimates together, we get \eqref{ez-n-1}.
\end{proof}

\begin{proof}[Proof of Lemma \ref{du-cpll}][ {\it Case   $1<p<\min\{6,3+\frac2{n-2}\}$.}]

By  Lemma \ref{div}, for any ${{\vec c}}\in\rn$  one gets
\begin{align}\label{e2.x4}
\left |\int_{Q_{2r}}[|D^2u^\ez|^2-(\Delta u^\ez)^2]\phi^2\,dx\,dt\right|\le C(n)\int_{Q_{2r}}|Du^\ez-{{\vec c}}|[|D\phi|^2+|D^2\phi||D\phi|]\,dx\,dt.
\end{align}

Multiplying both sides of \eqref{lpden2} by $\phi^2$ and integrating,
by \eqref{e2.x4}, Lemma \ref{ez-n} and Lemma \ref{vt-n}, for any $\vec c\in\rn$ and $\eta\in(0,1)$ we obtain
  \begin{align}
 &[\frac n2 \frac1{(p-2)^2}+\frac{1}{p-2}
-\frac{n-2}{2} -\eta]\int_{Q_{2r}}|D^2 u^\ez|^2\phi^2\,dx\,dt
 \nonumber\\
 &\quad\quad
+ [ \frac n2 \frac1{(p-2)^2} -\frac{1}{4 (p-1)}]\int_{Q_{2r}}( u^\ez_t)^2\phi^2\,dx\,dt
 \nonumber\\
&\quad\le   C(n,p,\eta)\int_{Q_{2r}}|Du^\ez-{{\vec c}}|^2[|\phi_t||\phi|+|D\phi|^2
]\,dx\,dt  +C(\eta)\ez  \int_{Q_{2r}} |D\phi|^2\,dx\,dt.\nonumber
\end{align}
Note that  $p\in (1,3+\frac2{n-2})$ implies that
 $$\frac n2 \frac1{(p-2)^2}+\frac{1}{p-2}
-\frac{n-2}{2}>0.$$
 Moreover, when $p\in (1+1/(n+1+\sqrt{n(n+2)}),n+2+\sqrt{n(n+2)})$, we have
$$\frac n2 \frac1{(p-2)^2} -\frac{1}{4(p-1)}>0.$$
Taking $\eta>0$  sufficiently small, and noting $n+2+\sqrt{n(n+2)}\ge 6$,
one has \eqref{xx1} under the condition that $p\in (1+1/(n+1+\sqrt{n(n+2)}),\min\{3+\frac2{n-2},6\})$.  Finally, it remains to notice by adding dummy variables $u$ also satisfies the equation in $\mathbb{R}^m$ for any $m\ge n$. Therefore, \eqref{xx1} holds for any $p$ in
$$
\bigcup_{m\ge n}(1+1/(m+1+\sqrt{m(m+2)}),\min\{3+\frac2{m-2},6\})
=(1, \min\{3+\frac2{n-2},6\}).
$$
 The lemma is proved in this case.
\end{proof}

\subsection {Case $n=2$ and $6\le p<\fz$.}

 We note that the proofs in this subsection works for any $p>2$.
Instead of  Lemma  \ref{ez-n}  we have the following.
\begin{lem}\label{e7.x2} Let $n=2$ and $6\le p<\fz$, and let  $\phi\in C_c^\fz(U_T)$.
For any   $\eta\in(0,1)$ we have
\begin{align*}
&\frac{\ez}{2}\int_{Q_{2r}}\frac{(\Delta u^\ez)^2-|D^2u^\ez|^2}{|Du^\ez|^2+\ez}\phi^2\,dx\,dt\\
&\quad\le \frac{\ez}{p-2}\int_{Q_{2r}} \frac{u^\ez_t\Delta u^\ez}{|Du^\ez|^2+\ez}\phi^2\,dx\,dt
+\eta  \int_{Q_{2r}} |D^2u^\ez |^2 \phi^2\,dx\,dt+ \frac{C}{\eta } \ez\int_{Q_{2r}}|D\phi|^2\,dx\,dt.
\end{align*}
\end{lem}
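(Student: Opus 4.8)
The plan is to treat this as the parabolic counterpart of Lemma~\ref{ez-n}: the computation will coincide with that of Lemma~\ref{ez-n} up to the very last algebraic step, where instead of completing the square I will keep the cross term $u^\ez_t\Delta u^\ez$. Concretely, I would start from the divergence identity $(\Delta u^\ez)^2-|D^2u^\ez|^2=[\Delta u^\ez u^\ez_{x_i}-u^\ez_{x_ix_j}u^\ez_{x_j}]_{x_i}$ (this is \eqref{e8.03} up to a sign), multiply it by $\tfrac{\ez}{2}\tfrac{\phi^2}{|Du^\ez|^2+\ez}$, and integrate by parts in the spatial variables. As in Lemma~\ref{ez-n} this produces two pieces: one, call it $\mathrm{II}$, in which the $x$-derivative falls on $\phi^2$, and one, call it $\mathrm{I}$, in which it falls on $(|Du^\ez|^2+\ez)^{-1}$, the latter equal to $\ez\int_{Q_{2r}}(\Delta u^\ez\tfrac{\Delta_\fz u^\ez}{[|Du^\ez|^2+\ez]^2}-\tfrac{|D^2u^\ez Du^\ez|^2}{[|Du^\ez|^2+\ez]^2})\phi^2\,dx\,dt$.

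For $\mathrm{II}$ I would argue as in Lemma~\ref{ez-n}: from $|\Delta u^\ez Du^\ez-D^2u^\ez Du^\ez|\le C(n)|D^2u^\ez|\,|Du^\ez|$ and the elementary bound $\tfrac{|Du^\ez|}{|Du^\ez|^2+\ez}\le\tfrac{1}{2\sqrt{\ez}}$, Young's inequality gives $|\mathrm{II}|\le C(n)\sqrt{\ez}\int_{Q_{2r}}|D^2u^\ez|\,|\phi|\,|D\phi|\,dx\,dt\le\eta\int_{Q_{2r}}|D^2u^\ez|^2\phi^2\,dx\,dt+\tfrac{C}{\eta}\ez\int_{Q_{2r}}|D\phi|^2\,dx\,dt$. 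For $\mathrm{I}$, I would use Cauchy--Schwarz in the form $|D^2u^\ez Du^\ez|^2\,|Du^\ez|^2\ge|\Delta_\fz u^\ez|^2$, hence $|D^2u^\ez Du^\ez|^2\ge\tfrac{|\Delta_\fz u^\ez|^2}{|Du^\ez|^2+\ez}$ pointwise (no division by zero, since $\ez>0$), and then substitute $\tfrac{\Delta_\fz u^\ez}{|Du^\ez|^2+\ez}=\tfrac{u^\ez_t-\Delta u^\ez}{p-2}$ from \eqref{ap-pn-plap}; this shows the integrand of $\mathrm{I}$ is at most $\ez\phi^2$ times $\tfrac{1}{(p-2)^2[|Du^\ez|^2+\ez]}[p\,\Delta u^\ez u^\ez_t-(p-1)(\Delta u^\ez)^2-(u^\ez_t)^2]$.

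The only point at which the argument departs from Lemma~\ref{ez-n} is the handling of this bracket. There one completes the square in $\Delta u^\ez$ and is left with a $\tfrac{1}{4(p-1)}(u^\ez_t)^2$ term; here, since we are in the regime $p\ge 6$ where no coefficient is left to absorb such a term into the left side of \eqref{pi-pn-plap}, I would instead invoke the elementary identity
$$p\,\Delta u^\ez u^\ez_t-(p-1)(\Delta u^\ez)^2-(u^\ez_t)^2-(p-2)\,\Delta u^\ez u^\ez_t=-(\Delta u^\ez-u^\ez_t)^2-(p-2)(\Delta u^\ez)^2\le 0,$$
valid because $p\ge 2$, so that the bracket is $\le(p-2)\,\Delta u^\ez u^\ez_t$ and the integrand of $\mathrm{I}$ is $\le\tfrac{\ez}{p-2}\tfrac{u^\ez_t\Delta u^\ez}{|Du^\ez|^2+\ez}\phi^2$ pointwise; integrating and adding the bound for $\mathrm{II}$ gives the lemma. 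I do not expect a real obstacle here, as this is a routine variant of Lemma~\ref{ez-n}; the only thing to get right is this bookkeeping choice (keeping the cross term rather than squaring it out), and one should also note---as the remark preceding the lemma anticipates---that the argument uses only $p>2$, the value of $n$ playing no role.
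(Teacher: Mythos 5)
Your argument is correct and is essentially the same as the paper's, up to a minor bookkeeping difference that doesn't change the substance. Both proofs integrate by parts exactly as in Lemma~\ref{ez-n} and treat the boundary piece $\mathrm{II}$ identically; the only place either proof modifies Lemma~\ref{ez-n} is how the interior piece $\mathrm{I}$ is finished off so as to retain the cross term $u^\ez_t\Delta u^\ez$. The paper takes the slightly shorter route: it just drops the manifestly nonpositive term $-\ez\,|D^2u^\ez Du^\ez|^2/[|Du^\ez|^2+\ez]^2$, then uses the equation in the form $\Delta_\fz u^\ez/(|Du^\ez|^2+\ez)=(u^\ez_t-\Delta u^\ez)/(p-2)$ and discards the nonpositive contribution $-\ez(\Delta u^\ez)^2/[(p-2)(|Du^\ez|^2+\ez)]$, arriving directly at the observation
\[
\ez\int_{Q_{2r}}\Delta u^\ez \frac{\Delta_\fz u^\ez}{[|Du^\ez|^2+\ez]^2}\phi^2\,dx\,dt
\;\le\; \frac{\ez}{p-2}\int_{Q_{2r}} \frac{u^\ez_t\Delta u^\ez}{|Du^\ez|^2+\ez}\phi^2\,dx\,dt .
\]
You instead keep the Cauchy--Schwarz replacement $|D^2u^\ez Du^\ez|^2\ge|\Delta_\fz u^\ez|^2/(|Du^\ez|^2+\ez)$ (as done inside Lemma~\ref{ez-n}), reach the same bracket $p\,\Delta u^\ez u^\ez_t-(p-1)(\Delta u^\ez)^2-(u^\ez_t)^2$, and then bound the bracket by $(p-2)\,\Delta u^\ez u^\ez_t$ via your identity with $-(\Delta u^\ez-u^\ez_t)^2-(p-2)(\Delta u^\ez)^2\le0$. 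That identity is correct, and the two routes discard the same kind of nonpositive terms, so they are interchangeable; yours retraces Lemma~\ref{ez-n} one step further before diverging, while the paper's is a shade more economical. Your closing remark that only $p>2$ (not $n=2$, not $p\ge6$) is used is also correct and matches the paper's note at the start of the subsection.
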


\begin{proof}
The proof follows from that of Lemma \ref{ez-n} once we observe that
\begin{align*}
\ez\int_{Q_{2r}}\Delta u^\ez \frac{\Delta_\fz u^\ez}{[|Du^\ez|^2+\ez]^2}\phi^2\,dx\,dt
&\le \frac{\ez}{p-2}\int_{Q_{2r}} \frac{u^\ez_t\Delta u^\ez}{|Du^\ez|^2+\ez}\phi^2\,dx\,dt.
\end{align*}
\end{proof}

Moreover, instead of Lemma  \ref{vt-n}, we have the following, whose proof is postponed to the end of this subsection.
\begin{lem} \label{du-cpll-2} Let $n=2$ and $6\le p<\fz$.
For any ${{\vec c}}\in\rn$ and $\eta\in(0,1)$ we have
\begin{align*}
&\frac{1}{p-2}\int_{Q_{2r}}\Delta u^\ez u^\ez_t\phi^2 \,dx\,dt\nonumber\\
&\quad\le -\frac{ \ez}{p-2}\int_{Q_{2r}} \frac{u^\ez_t\Delta u^\ez}{|Du^\ez|^2+\ez}\phi^2\,dx\,dt+ \int_{Q_{2r}}\frac{|D^2u^\ez Du^\ez|^2}{|Du^\ez|^2+\ez}\phi^2\,dx\,dt\\
&\quad\quad+
\eta \int_{Q_{2r}}|D^2u^\ez |^2\phi^2 \,dx\,dt+C(p,\eta)\int_{Q_{2r}}|D u^\ez-{{\vec c}}|^2[|\phi||D^2 \phi|+|D\phi|^2+|\phi||\phi_t|]\,dx\,dt\nonumber\\
&\quad\quad+\frac{C}{\eta} \ez\int_{Q_{2r}}|D\phi|^2\,dx\,dt
+C{\ez} \int_{Q_{2r}}|\ln [|D u^\ez|^2+\ez]||\phi||\phi_t|\,dx\,dt\\
&\quad\quad{ +C{\ez} \int_{B_{2r}}|\ln [|D u^\ez(x,0)|^2+\ez]|\phi^2(x,0)\,dx}.
\end{align*}
\end{lem}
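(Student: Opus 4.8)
The plan is to estimate $\frac{1}{p-2}\int_{Q_{2r}}\Delta u^\ez u^\ez_t\phi^2\,dx\,dt$ by repeated integration by parts, using the equation \eqref{ap-pn-plap}, the planar case of Lemma \ref{keylem} (which for $n=2$ is an \emph{equality}, since the right-hand side of \eqref{keyineqx} vanishes), and the divergence structure of $|D^2u^\ez|^2-(\Delta u^\ez)^2$ from Lemma \ref{div}. The point of singling out the two terms $-\frac{\ez}{p-2}\int\frac{u^\ez_t\Delta u^\ez}{|Du^\ez|^2+\ez}\phi^2$ and $\int\frac{|D^2u^\ez Du^\ez|^2}{|Du^\ez|^2+\ez}\phi^2$ on the right is structural: once Lemma \ref{du-cpll-2} is combined with \eqref{pi-pn-plap}, the first exactly cancels the term of the same shape produced by Lemma \ref{e7.x2}, and the second is absorbed into the left-hand side of \eqref{pi-pn-plap}, whose coefficient in front of $\frac{|D^2u^\ez Du^\ez|^2}{|Du^\ez|^2+\ez}$ is precisely $n/2=1$ in the plane; the remaining terms on the right of Lemma \ref{du-cpll-2} are a small multiple of $\int|D^2u^\ez|^2\phi^2$, lower-order integrals of $|Du^\ez-{\vec c}|^2$ against derivatives of $\phi$, and $\ez$-weighted terms that vanish as $\ez\to0$.

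Concretely, I would first write $\Delta u^\ez={\rm div}(Du^\ez)$, integrate by parts in $x$, and then use \eqref{ap-pn-plap} in the form $\frac{1}{p-2}\Delta u^\ez u^\ez_t=\frac{1}{p-2}(\Delta u^\ez)^2+\frac{\Delta u^\ez\Delta_\fz u^\ez}{|Du^\ez|^2+\ez}$. The quadratic term $(\Delta u^\ez)^2$ is recombined, through the planar identity and Lemma \ref{div}, with the expression $|D^2u^\ez|^2-(\Delta u^\ez)^2$ (which has a divergence structure) so as to leave only a term $\eta\int|D^2u^\ez|^2\phi^2$ plus lower-order contributions; wherever a bare gradient would otherwise appear I would replace $Du^\ez$ by $Du^\ez-{\vec c}$, which is permissible since ${\vec c}$ is arbitrary, and whenever $\partial_t$ is moved off a factor I would keep the resulting boundary integral over $\{t=0\}$, noting that the cutoff $\phi$ is supported in a time interval extending past the top of $Q_{2r}$ and so need not vanish there.

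The delicate piece — and what I expect to be the main obstacle — is $\int\frac{\Delta u^\ez\Delta_\fz u^\ez}{|Du^\ez|^2+\ez}\phi^2$. Here I would use $\frac{\Delta_\fz u^\ez}{|Du^\ez|^2+\ez}=\frac12\,Du^\ez\cdot D\ln(|Du^\ez|^2+\ez)$ and integrate by parts, transferring a derivative onto the logarithm; this produces a multiple of $\int\frac{|D^2u^\ez Du^\ez|^2}{|Du^\ez|^2+\ez}\phi^2$ together with remainders in which $\ln(|Du^\ez|^2+\ez)$ is paired with derivatives of $\phi$ and (after a further integration by parts in $t$) a boundary contribution over $\{t=0\}$. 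All of these logarithmic remainders carry an $\ez$ factor and are harmless, because $\ez\,|\ln(|Du^\ez|^2+\ez)|\le C$ uniformly in $\ez\in(0,1]$ (as $|Du^\ez|\in L^\fz(U_T)$ uniformly in $\ez$), so they tend to $0$ with $\ez$; the $\ez$-cancellation term $-\frac{\ez}{p-2}\int\frac{u^\ez_t\Delta u^\ez}{|Du^\ez|^2+\ez}\phi^2$ emerges in the same way from the discrepancy between $|Du^\ez|^2$ and $|Du^\ez|^2+\ez$ in these manipulations, after one more use of \eqref{ap-pn-plap} to rewrite $\frac{\Delta_\fz u^\ez}{|Du^\ez|^2+\ez}$ in terms of $u^\ez_t-\Delta u^\ez$.

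The genuine difficulty should be bookkeeping rather than any isolated inequality: every $\ez$ factor must be kept attached where it belongs, the $\Delta u^\ez u^\ez_t$- and $\Delta_\fz u^\ez$-type terms that reappear after integration by parts must be exactly the ones arranged to cancel (rather than accumulating with a coefficient that cannot be absorbed), and one must avoid applying the crude bound $|u^\ez_t|\le C(p)|D^2u^\ez|$ to the leading terms, since $C(p)$ blows up as $p\to\fz$ and would wreck the final absorption — this is precisely why the range $p\ge6$ needs a more surgical treatment than the case $1<p<6$ handled in the previous subsection. Granting Lemma \ref{du-cpll-2}, the proof of Lemma \ref{du-cpll} in this case follows by multiplying \eqref{pi-pn-plap} by $\phi^2$, integrating, inserting Lemmas \ref{div}, \ref{e7.x2} and \ref{du-cpll-2} so that the $\frac{|D^2u^\ez Du^\ez|^2}{|Du^\ez|^2+\ez}$-terms and the $\frac{\ez}{p-2}\int\frac{u^\ez_t\Delta u^\ez}{|Du^\ez|^2+\ez}\phi^2$-terms cancel, absorbing a small multiple of $\int|D^2u^\ez|^2\phi^2$ to the left (possible since the surviving coefficient $\frac{1}{p-2}$ is positive), and then letting $\ez\to0$.
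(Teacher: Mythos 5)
There is a genuine gap in the proposal. Your very first structural step is to substitute the equation \eqref{ap-pn-plap} inside the integrand as $\frac{1}{p-2}\Delta u^\ez u^\ez_t=\frac{1}{p-2}(\Delta u^\ez)^2+\frac{\Delta u^\ez\Delta_\fz u^\ez}{|Du^\ez|^2+\ez}$, and you then claim that the resulting $\frac{1}{p-2}\int(\Delta u^\ez)^2\phi^2$ "is recombined, through the planar identity and Lemma~\ref{div}, \ldots so as to leave only a term $\eta\int|D^2u^\ez|^2\phi^2$ plus lower-order contributions." That last step is false: writing $(\Delta u^\ez)^2=|D^2u^\ez|^2-[|D^2u^\ez|^2-(\Delta u^\ez)^2]$ and applying Lemma~\ref{div} to the bracket only trades $\frac{1}{p-2}\int(\Delta u^\ez)^2\phi^2$ for $\frac{1}{p-2}\int|D^2u^\ez|^2\phi^2$ plus lower-order terms, and $\frac{1}{p-2}$ is a \emph{fixed} positive constant, not an arbitrary $\eta$. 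This matters because the lemma is to be fed back into the integrated form of \eqref{pi-pn-plap}, on whose left-hand side $\frac{1}{p-2}\int(\Delta u^\ez)^2\phi^2$ (equivalently, after adding $\frac{1}{p-2}\int[|D^2u^\ez|^2-(\Delta u^\ez)^2]\phi^2$ to both sides, $\frac{1}{p-2}\int|D^2u^\ez|^2\phi^2$) is precisely the surviving good term; a right-hand contribution of exactly $\frac{1}{p-2}\int|D^2u^\ez|^2\phi^2$ cancels it and leaves nothing. This is why the statement of Lemma~\ref{du-cpll-2} has only $\eta\int|D^2u^\ez|^2\phi^2$ with arbitrary $\eta$, and no $(\Delta u^\ez)^2$-term, on its right.

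The secondary issue is the log-derivative manipulation. If you write $\frac{\Delta_\fz u^\ez}{|Du^\ez|^2+\ez}=\frac12 Du^\ez\cdot D\ln(|Du^\ez|^2+\ez)$ and integrate by parts to move the gradient off $\ln$, you obtain (schematically) $-\frac12\int\ln(|Du^\ez|^2+\ez)\,{\rm div}(\Delta u^\ez Du^\ez\phi^2)$, whose expansion contains $-\frac12\int\ln(|Du^\ez|^2+\ez)(\Delta u^\ez)^2\phi^2$ and a third-derivative term $-\frac12\int\ln(|Du^\ez|^2+\ez)(\Delta u^\ez)_{x_i}u^\ez_{x_i}\phi^2$. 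Neither of these carries an $\ez$ prefactor, so your claim that "all of these logarithmic remainders carry an $\ez$ factor and are harmless" does not hold here ($\ln(|Du^\ez|^2+\ez)$ is not bounded below without an $\ez$ weight), and it is also not visible how this route produces the needed term $\int\frac{|D^2u^\ez Du^\ez|^2}{|Du^\ez|^2+\ez}\phi^2$.

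The paper's proof does something structurally different. It does not insert the equation to eliminate $u^\ez_t$; instead it first splits $\frac{1}{p-2}\Delta u^\ez u^\ez_t\phi^2$ into the two pieces $\frac{1}{p-2}\Delta u^\ez u^\ez_t\phi^2\frac{|Du^\ez|^2}{|Du^\ez|^2+\ez}$ and $\frac{2\ez}{p-2}\frac{\Delta u^\ez u^\ez_t}{|Du^\ez|^2+\ez}\phi^2$ (whose sum, after moving the surplus $\frac{\ez}{p-2}\int\frac{\Delta u^\ez u^\ez_t}{|Du^\ez|^2+\ez}\phi^2$ to the other side, yields the target left-hand side and the first right-hand-side term). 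The weight-$\frac{|Du^\ez|^2}{|Du^\ez|^2+\ez}$ piece is integrated by parts from $\Delta u^\ez={\rm div}(Du^\ez)$; the extra $x$-dependence of the weight is what brings in $\Delta_\fz u^\ez$, which is then converted via the equation and Cauchy--Schwarz/Young to expose $\int\frac{|D^2u^\ez Du^\ez|^2}{|Du^\ez|^2+\ez}\phi^2$ with coefficient exactly one. The $\ez$-weighted piece is the one that, after integrating by parts in $t$, produces the $\ln(|Du^\ez|^2+\ez)$ remainders, and because the $\ez$ prefactor is already present at the start of that piece, those remainders genuinely carry $\ez$ and are small. Two further ingredients — the terms $\pm\frac{1}{(p-2)^2}\int(u^\ez_t)^2\phi^2$ and $\pm\ez\int\frac{|D^2u^\ez Du^\ez|^2}{[|Du^\ez|^2+\ez]^2}\phi^2$ produced with opposite signs in the two halves — cancel exactly when the two halves are added. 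Your high-level description of how Lemma~\ref{du-cpll-2} is used afterwards is correct, but the derivation of the lemma itself requires this $\ez/|Du^\ez|^2$-weighted decomposition rather than a direct substitution of the equation.
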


\begin{proof}[Proof of Lemma \ref{du-cpll}][{\it  Case $n=2$ and $6\le p<\fz$.}]

Multiplying both sides of \eqref{pi-pn-plap} with $n=2$ by $\phi^2$ and integrating { in $Q_{2r}$},
by \eqref{e2.x4}, Lemma \ref{e7.x2} and Lemma \ref{du-cpll-2}, for any $\vec c\in\rn$ and $\eta\in(0,1)$ we obtain
\begin{align*}
&\frac{1}{p-2} \int_{Q_{2r}}(\Delta u^\ez)^2\phi^2\,dx\,dt-\eta \int_{Q_{2r}}|D^2u^\ez|^2\phi^2\,dx\,dt
\\
&\quad\le C(p,\eta)\int_{Q_{2r}}|D u^\ez-\vec c|^2[|D\phi|^2+|\phi||D^2\phi| +|\phi||\phi_t|
]\,dx\,dt
+\frac{C}{\eta} \ez\int_{Q_{2r}}|D\phi|^2\,dx\,dt\\
&\quad\quad +C\ez\int_{Q_{2r}}|\ln [|D u^\ez|^2+\ez]||\phi||\phi_t|\,dx\,dt
{ +C{\ez} \int_{B_{2r}}|\ln [|D u^\ez(x,0)|^2+\ez]|\phi^2(x,0)\,dx}.
\end{align*}
Choosing $0<\eta<\frac1{2(p-2)}$ be sufficiently small,
  adding both sides by
$$\frac{1}{ p-2}\int_{Q_{2r}}[D^2u^\ez|^2-(\Delta u^\ez)^2]\phi^2\,dx\,dt,$$
and applying \eqref{e2.x4} and Lemma \ref{div}, we get
\begin{align*}
& \int_{Q_{2r}}|D^2u^\ez|^2\phi^2\,dx\,dt
\le C(p)\int_{Q_{2r}}|D u^\ez-\vec c|^2[
|D\phi|^2+|\phi||D^2\phi|]\,dx\,dt\\
&\quad+C(p)\ez \int_{Q_{2r}}[|D\phi|^2+|\ln [|D u^\ez|^2+\ez]||\phi||\phi_t|]\,dx\,dt
{ +C(p)\ez \int_{B_{2r}}|\ln [|D u^\ez(x,0)|^2+\ez]|\phi^2\,dx}
\end{align*}  as desired.
\end{proof}

 Finally, we note that
Lemma \ref{du-cpll-2} follows from Lemmas
\ref{du-cpll-2-1} and \ref{du-cpll-2-2} below.

\begin{lem} \label{du-cpll-2-1} Let $n=2$ and $6\le p<\fz$.
For any $\eta>0$, we have
\begin{align}\label{les5}
&\frac{2 \ez}{p-2}\int_{Q_{2r}} \frac{u^\ez_t\Delta u^\ez}{|Du^\ez|^2+\ez}\phi^2\,dx\,dt\nonumber\\
&\quad\le \frac{1}{(p-2)^2}\int_{Q_{2r}}(u^\ez_t)^2\phi^2\,dx\,dt+
\ez\int_{Q_{2r}}\frac{|D^2u^{\ez}Du^\ez|^2}{[|Du^\ez|^2+\ez]^2}\phi^2\,dx\,dt+
\eta \int_{Q_{2r}}|D^2u^\ez |^2\phi^2 \,dx\,dt\nonumber\\
&\quad\quad+\frac{C}{\eta} \ez\int_{Q_{2r}}|D\phi|^2\,dx\,dt
+ \frac {2\ez}{p-2} \int_{Q_{2r}}|\ln [|D u^\ez|^2+\ez]||\phi||\phi_t|\,dx\,dt\nonumber\\
&\quad\quad { -\frac{\ez}{p-2} \int_{B_{2r}}\ln [|D u^\ez(x,0)|^2+\ez]\phi^2(x,0)\,dx}.
\end{align}
\end{lem}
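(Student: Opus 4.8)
The plan is to estimate the left--hand side by two integrations by parts --- first in the spatial variable, to remove the second--order derivative from $\Delta u^\ez=\mathrm{div}(Du^\ez)$, and then in time on the $u^\ez_t$--term this creates --- and afterwards to dispose of the remaining quadratic expressions by Young's inequality. The structural input is the non--divergence form of \eqref{ap-pn-plap}, i.e.\ the pointwise identity $\frac{\Delta_\fz u^\ez}{|Du^\ez|^2+\ez}=\frac{u^\ez_t-\Delta u^\ez}{p-2}$, together with the elementary bounds $\frac{|Du^\ez|}{|Du^\ez|^2+\ez}\le\frac1{2\sqrt\ez}$, $\frac{\ez|Du^\ez|^2}{(|Du^\ez|^2+\ez)^2}\le\frac14$, $|\Delta_\fz u^\ez|\le|Du^\ez|\,|D^2u^\ez Du^\ez|$, and $|u^\ez_t|\le(p+n)|D^2u^\ez|$ from \eqref{eq8.26}.

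First I would integrate by parts in $x$ (there is no boundary term, $\phi$ being compactly supported in $B_{2r}$):
\[
\frac{2\ez}{p-2}\int_{Q_{2r}}\frac{u^\ez_t\Delta u^\ez}{|Du^\ez|^2+\ez}\,\phi^2
=-\frac{2\ez}{p-2}\int_{Q_{2r}}Du^\ez\cdot D\!\left(\frac{u^\ez_t\phi^2}{|Du^\ez|^2+\ez}\right),
\]
and expand the gradient into three terms. The term where the derivative falls on $u^\ez_t$ equals $-\frac{\ez}{p-2}\int_{Q_{2r}}\partial_t[\ln(|Du^\ez|^2+\ez)]\,\phi^2$, and here I would integrate by parts in $t$ on $(-4r^2,0)$, using that $\phi$ vanishes near $t=-4r^2$ but equals $1$ on $B_r$ when $t=0$; this produces precisely the two logarithmic terms of the statement, namely the interior one, controlled by $\frac{2\ez}{p-2}\int_{Q_{2r}}|\ln(|Du^\ez|^2+\ez)|\,|\phi|\,|\phi_t|$, and the boundary integral $-\frac{\ez}{p-2}\int_{B_{2r}}\ln(|Du^\ez(x,0)|^2+\ez)\,\phi^2(x,0)\,dx$. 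The term where the derivative falls on $\phi^2$ is $-\frac{4\ez}{p-2}\int_{Q_{2r}}\frac{u^\ez_t\,Du^\ez\cdot D\phi\,\phi}{|Du^\ez|^2+\ez}$; bounding $\frac{|Du^\ez|}{|Du^\ez|^2+\ez}\le\frac1{2\sqrt\ez}$ and then using Young's inequality together with \eqref{eq8.26}, it is absorbed into $\eta\int_{Q_{2r}}|D^2u^\ez|^2\phi^2+\frac C\eta\,\ez\int_{Q_{2r}}|D\phi|^2$.

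The remaining, and hardest, term is $J_3:=\frac{4\ez}{p-2}\int_{Q_{2r}}\frac{u^\ez_t\,\Delta_\fz u^\ez}{(|Du^\ez|^2+\ez)^2}\,\phi^2$. Using the equation to replace $\frac{\Delta_\fz u^\ez}{|Du^\ez|^2+\ez}$ by $\frac{u^\ez_t-\Delta u^\ez}{p-2}$ rewrites it as $\frac{4\ez}{(p-2)^2}\int_{Q_{2r}}\frac{u^\ez_t(u^\ez_t-\Delta u^\ez)}{|Du^\ez|^2+\ez}\,\phi^2$. I would then apply Young's inequality to the product $u^\ez_t\cdot\frac{u^\ez_t-\Delta u^\ez}{|Du^\ez|^2+\ez}$, choosing the parameter so that the $(u^\ez_t)^2$--contribution, after using $\frac{\ez}{|Du^\ez|^2+\ez}\le1$, is exactly $\frac1{(p-2)^2}\int_{Q_{2r}}(u^\ez_t)^2\phi^2$; the complementary contribution carries a factor $\ez\,\frac{(u^\ez_t-\Delta u^\ez)^2}{(|Du^\ez|^2+\ez)^2}=(p-2)^2\,\ez\,\frac{(\Delta_\fz u^\ez)^2}{(|Du^\ez|^2+\ez)^4}$, and the bounds $(\Delta_\fz u^\ez)^2\le|Du^\ez|^2|D^2u^\ez Du^\ez|^2$ and $\frac{\ez|Du^\ez|^2}{(|Du^\ez|^2+\ez)^2}\le\frac14$ turn it into a multiple of $\ez\int_{Q_{2r}}\frac{|D^2u^\ez Du^\ez|^2}{(|Du^\ez|^2+\ez)^2}\phi^2$. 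If the constants do not close immediately, one keeps the favorable negative term $-(\Delta u^\ez)^2$ furnished by the polarization identity $2u^\ez_t(u^\ez_t-\Delta u^\ez)=(u^\ez_t)^2+(u^\ez_t-\Delta u^\ez)^2-(\Delta u^\ez)^2$ to sharpen them.

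The main obstacle is exactly this last step: getting the $\ez$--weighting and the precise numerical coefficients of the statement, since a rough Young's inequality delivers the right terms but with larger constants, and tracking the powers of $|Du^\ez|^2+\ez$ through these manipulations needs care. Once $J_3$ is controlled, summing the three contributions gives \eqref{les5}.
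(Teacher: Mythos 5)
Your overall strategy mirrors the paper's exactly: integrate by parts in $x$, recognize $-\frac{2\ez}{p-2}\int\frac{u^\ez_{x_i}u^\ez_{x_i t}}{|Du^\ez|^2+\ez}\phi^2 = -\frac{\ez}{p-2}\int\big[\ln(|Du^\ez|^2+\ez)\big]_t\phi^2$ and integrate in $t$ to produce the two logarithmic terms (including the boundary integral at $t=0$), then absorb the $D\phi$-term via Young's inequality and \eqref{eq8.26}. The splitting into three pieces after the first integration by parts, and the treatment of the first two, is the same as the paper.

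The genuine gap is in $J_3$, and you have flagged it yourself, but it is worth making precise why the split you describe cannot close. The paper applies Young directly to
\[
\frac{4\ez}{p-2}\,\frac{\Delta_\fz u^\ez\,u^\ez_t}{(|Du^\ez|^2+\ez)^2}
= 2\cdot\frac{u^\ez_t}{p-2}\cdot\frac{2\ez\,\Delta_\fz u^\ez}{(|Du^\ez|^2+\ez)^2}
\le \frac{(u^\ez_t)^2}{(p-2)^2}+\frac{4\ez^2(\Delta_\fz u^\ez)^2}{(|Du^\ez|^2+\ez)^4},
\]
so the full $\ez^2$ sits on the $\Delta_\fz u^\ez$ side; then $(\Delta_\fz u^\ez)^2\le |Du^\ez|^2|D^2u^\ez Du^\ez|^2$ and $4\ez|Du^\ez|^2\le(|Du^\ez|^2+\ez)^2$ trade exactly one power of $\ez$ for one power of $(|Du^\ez|^2+\ez)^{-2}$, landing precisely on $\ez\,|D^2u^\ez Du^\ez|^2/(|Du^\ez|^2+\ez)^2$. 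In your version you first substitute $\Delta_\fz u^\ez/(|Du^\ez|^2+\ez)=(u^\ez_t-\Delta u^\ez)/(p-2)$, which is fine, but then you throw away the factor $\frac{\ez}{|Du^\ez|^2+\ez}\le 1$ on the $(u^\ez_t)^2$ side of Young; this leaves the complementary term with only $\ez^1(u^\ez_t-\Delta u^\ez)^2/(|Du^\ez|^2+\ez)^2$, and since $(u^\ez_t-\Delta u^\ez)^2=(p-2)^2(\Delta_\fz u^\ez)^2/(|Du^\ez|^2+\ez)^2$ brings in only two more powers of $(|Du^\ez|^2+\ez)^{-1}$, the AM–GM bound $\ez|Du^\ez|^2/(|Du^\ez|^2+\ez)^2\le\tfrac14$ consumes the whole remaining $\ez$ and you are left with $\tfrac{(p-2)^2}{4}\,|D^2u^\ez Du^\ez|^2/(|Du^\ez|^2+\ez)^2$ — no $\ez$-weight, and a constant that is at least $4$ when $p\ge6$. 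This is not a matter of a harmless multiplicative constant: the factor $\ez$ in front of $|D^2u^\ez Du^\ez|^2/(|Du^\ez|^2+\ez)^2$ is exactly what cancels against the $-\ez\int|D^2u^\ez Du^\ez|^2/(|Du^\ez|^2+\ez)^2\phi^2$ in Lemma \ref{du-cpll-2-2} when assembling Lemma \ref{du-cpll-2}, so losing it breaks the downstream argument. The fix within your framework is simply to split Young asymmetrically, $A=u^\ez_t/(p-2)$ with no $\ez$-weight and $B=2\ez(u^\ez_t-\Delta u^\ez)/\big((p-2)(|Du^\ez|^2+\ez)\big)$; then $B^2$ carries $\ez^2/(|Du^\ez|^2+\ez)^2$ and the computation closes with the exact constants of \eqref{les5}. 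The polarization identity you mention as a fallback also does not help here: the $(u^\ez_t)^2/(|Du^\ez|^2+\ez)$ term it produces comes with coefficient $2/(p-2)^2$ rather than $1/(p-2)^2$.
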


\begin{lem} \label{du-cpll-2-2}Let $n=2$ and $6\le p<\fz$.
For any $\eta>0$, we have
\begin{align}\label{les6}
&\frac{1}{p-2}\int_{Q_{2r}}\Delta u^\ez u^\ez_t\phi^2\frac{|Du^\ez|^2}{|Du^\ez|^2+\ez}\,dx\,dt\nonumber\\
&\quad\le -\frac{1}{(p-2)^2}\int_{Q_{2r}}(u^\ez_t)^2\phi^2\,dx\,dt-
\ez\int_{Q_{2r}}\frac{|D^2u^{\ez}Du^\ez|^2}{[|Du^\ez|^2+\ez]^2}\phi^2\,dx\,dt+\int_{Q_{2r}}\frac{|D^2u^\ez Du^\ez|^2}{|Du^\ez|^2+\ez}\phi^2\,dx\,dt\nonumber\\
&\quad\quad+\eta \int_{Q_{2r}} |D^2u^\ez|^2\phi^2\,dx\,dt+C(p,\eta)\int_{Q_{2r}}|D u^\ez-{{\vec c}}|^2[|\phi||\phi_t|+|\phi||D^2\phi|+|D\phi|^2]\,dx\,dt\nonumber\\
&\quad\quad+\frac{C}{\eta} \ez\int_{Q_{2r}}|D\phi|^2\,dx\,dt
+\frac{\ez}{p-2}\int_{Q_{2r}}|\ln [|D u^\ez|^2+\ez]||\phi||\phi_t|\,dx\,dt\nonumber\\
&\quad\quad { +\frac{\ez}{2(p-2)} \int_{B_{2r}}\ln [|D u^\ez(x,0)|^2+\ez]\phi^2(x,0)\,dx}.
\end{align}
\end{lem}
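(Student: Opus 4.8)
The plan is to prove \eqref{les6} by integration by parts in the space and time variables, organised around three structural facts about $u^\ez$: the equation \eqref{ap-pn-plap} in the form $\dfrac{\Delta_\fz u^\ez}{|Du^\ez|^2+\ez}=\dfrac{u^\ez_t-\Delta u^\ez}{p-2}$; the pointwise bound $|u^\ez_t|\le C(p)|D^2u^\ez|$ for $n=2$ (cf. \eqref{eq8.26}); and the Cauchy--Schwarz inequality $|\Delta_\fz u^\ez|\le|D^2u^\ez Du^\ez|\,|Du^\ez|$, which combined with the first identity gives the pointwise estimate
$$
\frac{|D^2u^\ez Du^\ez|^2}{|Du^\ez|^2+\ez}-\ez\,\frac{|D^2u^\ez Du^\ez|^2}{[|Du^\ez|^2+\ez]^2}
=\frac{|D^2u^\ez Du^\ez|^2\,|Du^\ez|^2}{[|Du^\ez|^2+\ez]^2}
\ge\Big(\frac{\Delta_\fz u^\ez}{|Du^\ez|^2+\ez}\Big)^2=\frac{(u^\ez_t-\Delta u^\ez)^2}{(p-2)^2}.
$$
Expanding $(u^\ez_t-\Delta u^\ez)^2$, this inequality is exactly what lets the leading three terms on the right of \eqref{les6} — namely $-\tfrac1{(p-2)^2}\int(u^\ez_t)^2\phi^2$, $-\ez\int\tfrac{|D^2u^\ez Du^\ez|^2}{[|Du^\ez|^2+\ez]^2}\phi^2$ and $+\int\tfrac{|D^2u^\ez Du^\ez|^2}{|Du^\ez|^2+\ez}\phi^2$ — dominate the crossed product $u^\ez_t\Delta u^\ez$ up to a $(\Delta u^\ez)^2$-remainder, the latter being absorbed into $\eta\int|D^2u^\ez|^2\phi^2$ and $C(p,\eta)\int|Du^\ez-{\vec c}|^2[\dots]$ via $(\Delta u^\ez)^2=|D^2u^\ez|^2-(|D^2u^\ez|^2-(\Delta u^\ez)^2)$ and Lemma~\ref{div}.

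The first step is to integrate by parts in $x$, writing $\Delta u^\ez={\rm div}(Du^\ez-{\vec c})$ for a constant ${\vec c}\in\rn$ (the ${\vec c}$-part costs nothing since ${\rm div}\,{\vec c}=0$) and using the compact support of $\phi$ in space. Expanding the resulting $x$-derivative into the pieces hitting $u^\ez_t$, hitting $\phi^2$, and hitting the weight $\tfrac{|Du^\ez|^2}{|Du^\ez|^2+\ez}$ — whose derivative equals $\tfrac{2\ez\,u^\ez_{x_j}u^\ez_{x_jx_i}}{[|Du^\ez|^2+\ez]^2}$, and which after contracting indices carries a factor $\Delta_\fz u^\ez$ that one again replaces using the equation — produces: a term containing $u^\ez_{tx_i}$ (treated below); terms estimated by Young's inequality and $|u^\ez_t|\le C(p)|D^2u^\ez|$, contributing to $\eta\int|D^2u^\ez|^2\phi^2$ and $C(p,\eta)\int|Du^\ez-{\vec c}|^2|D\phi|^2$; and $\ez$-weighted terms, for which the elementary bounds $\tfrac\ez{|Du^\ez|^2+\ez}\le1$ and $\tfrac{|Du^\ez|}{|Du^\ez|^2+\ez}\le\tfrac1{2\sqrt\ez}$ turn the $\ez$-prefactor into $\sqrt\ez$ where needed, yielding after Young the $\tfrac C\eta\,\ez\int|D\phi|^2$ term together with a negative $(u^\ez_t)^2$-contribution.

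The $u^\ez_{tx_i}$-term is the core. Using $(u^\ez_{x_i}-c_i)u^\ez_{tx_i}=\tfrac12\partial_t|Du^\ez-{\vec c}|^2$, and for the factor carrying $u^\ez_{x_i}u^\ez_{tx_i}$ the identity $\tfrac{|Du^\ez|^2}{|Du^\ez|^2+\ez}\partial_t|Du^\ez|^2=\partial_t\big(|Du^\ez|^2-\ez\ln[|Du^\ez|^2+\ez]\big)$, one integrates by parts in $t$ over $Q_{2r}$. Since $\phi$ is supported in $B_{2r}\times(-4r^2,4r^2)$ it vanishes on the bottom of $Q_{2r}$ but not on the top $\{t=0\}$, so a boundary integral over $B_{2r}$ appears; this step is the source of the logarithmic volume term $\tfrac\ez{p-2}\int|\ln[|Du^\ez|^2+\ez]|\,|\phi||\phi_t|$ and the logarithmic boundary term $\tfrac\ez{2(p-2)}\int_{B_{2r}}\ln[|Du^\ez(x,0)|^2+\ez]\,\phi^2(x,0)\,dx$, together with a boundary term carrying $|Du^\ez(x,0)-{\vec c}|^2$ that has the favorable sign $p>2$ and is discarded, and with volume terms of the form $\int|Du^\ez-{\vec c}|^2\,\partial_t(\phi^2)$ (bounded by the $|Du^\ez-{\vec c}|^2|\phi||\phi_t|$ term of \eqref{les6}) and of the $\ez$-type handled as above. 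Assembling all contributions, invoking the pointwise inequality to close the crossed-product terms, bounding the residual $(\Delta u^\ez)^2$ by Lemma~\ref{div} and $Du^\ez\in L^\fz_\loc$, and tuning the Young parameters then yields \eqref{les6}.

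I expect the bookkeeping to be the main obstacle, for two reasons. First, $|Du^\ez|$ may vanish, so nothing can be divided by $|Du^\ez|^2$; every estimate must pass through $|Du^\ez|^2+\ez$ and the two AM--GM bounds above, and it is precisely the appearance of $\int\tfrac\ez{|Du^\ez|^2+\ez}\,\partial_t|Du^\ez|^2$ that forces the logarithm into \eqref{les6}. Second, the time-boundary integrals over $\{t=0\}$ are a new feature compared to the elliptic setting and to Lemma~\ref{vt-n} (where $\phi$ is compactly supported in time), and their signs must be tracked carefully; the saving point is that the only one without the sign dictated by $p>2$ is the logarithmic one, which is exactly the $\tfrac\ez{2(p-2)}$-term retained on the right of \eqref{les6}. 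Getting the constants to come out exactly — the factor $\tfrac1{(p-2)^2}$ on $(u^\ez_t)^2$, the coefficient $1$ on $\tfrac{|D^2u^\ez Du^\ez|^2}{|Du^\ez|^2+\ez}$, and $\tfrac\ez{2(p-2)}$ on the boundary logarithm — is what makes \eqref{les6} combine cleanly, via Lemma~\ref{du-cpll-2-1}, with \eqref{pi-pn-plap} to give the desired $W^{2,2}_\loc$-bound.
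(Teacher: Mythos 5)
Your proposal identifies all of the correct ingredients for the argument — integration by parts in $x$ starting from the weighted expression, the equation in the form $\Delta_\fz u^\ez/(|Du^\ez|^2+\ez)=(u^\ez_t-\Delta u^\ez)/(p-2)$, the Cauchy--Schwarz bound $|\Delta_\fz u^\ez|\le|D^2u^\ez Du^\ez|\,|Du^\ez|$, the pointwise bound $|u^\ez_t|\le C(p)|D^2u^\ez|$, and the observation that $\tfrac{|Du^\ez|^2}{|Du^\ez|^2+\ez}\,\partial_t|Du^\ez|^2=\partial_t\bigl(|Du^\ez|^2-\ez\ln[|Du^\ez|^2+\ez]\bigr)$ together with the $t=0$ boundary term. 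Your analysis of which time-boundary term has the good sign for $p>2$ and which must be retained (the logarithmic one) is also correct.

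There is, however, a genuine gap in the proposed closing step. You set up the chain
\[
\frac{|D^2u^\ez Du^\ez|^2}{|Du^\ez|^2+\ez}-\ez\,\frac{|D^2u^\ez Du^\ez|^2}{[|Du^\ez|^2+\ez]^2}
\;\ge\;\frac{(u^\ez_t-\Delta u^\ez)^2}{(p-2)^2}
\]
and claim that, after expanding the square, this is what lets the three budget terms dominate the crossed product $u^\ez_t\Delta u^\ez$. But expanding $(u^\ez_t-\Delta u^\ez)^2\ge 0$ bounds $-2u^\ez_t\Delta u^\ez$ from above, i.e., it is a \emph{lower} bound on $u^\ez_t\Delta u^\ez$. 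After spatial integration by parts and substituting the equation into the term $-\tfrac{2}{p-2}\int\tfrac{\Delta_\fz u^\ez\,u^\ez_t\phi^2}{|Du^\ez|^2+\ez}$, the crossed product appears with a \emph{positive} coefficient, $+\tfrac{2}{(p-2)^2}\int u^\ez_t\Delta u^\ez\phi^2$, and it is an \emph{upper} bound on this quantity that is needed. Your pointwise inequality cannot provide that upper bound; and the na\"ive Young alternative $2u^\ez_t\Delta u^\ez\le(u^\ez_t)^2+(\Delta u^\ez)^2$ produces a $+\tfrac{1}{(p-2)^2}\int(u^\ez_t)^2\phi^2$ with nothing left to absorb it. The paper instead bounds this crossed product by Lemma~\ref{vt-n} (an integration-by-parts/Young estimate yielding $\eta\int|D^2u^\ez|^2\phi^2+\tfrac{C}{\eta}\int|Du^\ez-\vec c|^2[\,\cdots\,]$), and reserves the Cauchy--Schwarz step for the fourth term $\tfrac{2}{p-2}\int\tfrac{\Delta_\fz u^\ez u^\ez_t|Du^\ez|^2}{[|Du^\ez|^2+\ez]^2}\phi^2$, which after Young is what produces the $\int\tfrac{|D^2u^\ez Du^\ez|^2}{|Du^\ez|^2+\ez}\phi^2-\ez\int\tfrac{|D^2u^\ez Du^\ez|^2}{[|Du^\ez|^2+\ez]^2}\phi^2$ budget and the $+\tfrac{1}{(p-2)^2}\int(u^\ez_t)^2\phi^2$ that combines with the $-\tfrac{2}{(p-2)^2}\int(u^\ez_t)^2\phi^2$ from the first term. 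If you try to spend that same budget on the crossed product via your pointwise inequality, you will double-count it.

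A secondary issue: if you replace $\Delta u^\ez$ by ${\rm div}(Du^\ez-\vec c)$ at the outset and integrate by parts, the factor outside becomes $u^\ez_{x_i}-c_i$; contracting against the weight's derivative $\tfrac{2\ez\,u^\ez_{x_j}u^\ez_{x_jx_i}}{[|Du^\ez|^2+\ez]^2}$ then yields $\Delta_\fz u^\ez-\vec c\cdot D^2u^\ez Du^\ez$, not the pure $\Delta_\fz u^\ez$ you need in order to invoke the equation. The paper avoids this by keeping $u^\ez_{x_i}$ (no $\vec c$) at that stage and introducing $\vec c$ only inside Lemma~\ref{vt-n} and Lemma~\ref{div}. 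Your time-integration-by-parts treatment of the $u^\ez_{x_i}u^\ez_{x_it}$ piece is also organized differently from the paper's (which obtains a cancellation of the unweighted part against an analogous term from the piece where the derivative hits $\phi^2$), but that divergence is not fatal — the extra boundary term does have the favorable sign, as you note.
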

\begin{proof}[Proof of Lemma \ref{du-cpll-2-1}]
By integration by parts we have
\begin{align*}
 &\frac{2 \ez}{p-2}\int_{Q_{2r}} \frac{u^\ez_t\Delta u^\ez}{|Du^\ez|^2+\ez}\phi^2\,dx\,dt\\
&\quad=-\frac{ 2\ez}{p-2}\int_{Q_{2r}} u^\ez_{x_i}\left(\frac{u^\ez_t}{|Du^\ez|^2+\ez}\phi^2\right)_{x_i}\,dx\,dt\\
&\quad=-\frac{ 2\ez}{p-2}\int_{Q_{2r}}\frac{u^\ez_{x_i}u^\ez_{x_it}}{|Du^\ez|^2+\ez}\phi^2\,dx\,dt
-\frac{4\ez}{p-2}\int_{Q_{2r}}\frac{u^\ez_{x_i}u^\ez_{t}\phi_{x_i}}{|Du^\ez|^2+\ez}\phi\,dx\,dt\\
&\quad\quad+\frac{4\ez}{p-2}\int_{Q_{2r}}\frac{\Delta_\fz u^\ez u^\ez_{t}}{[|Du^\ez|^2+\ez]^2}\phi^2\,dx\,dt.
\end{align*}

We estimate the three terms on the right-hand side in order.
First, from
 $$\frac{  u^\ez_{x_it}u^\ez_{x_i} }{|Du^\ez|^2+\ez}=\frac12\frac{(|Du^\ez|^2)_t }{|Du^\ez|^2+\ez}=\frac12 [\ln [|D u^\ez|^2+\ez]]_t $$
and integration by parts  it follows that
\begin{align}\label{lnterm}
&-\frac{ 2\ez}{p-2}\int_{Q_{2r}}\frac{u^\ez_{x_i}u^\ez_{x_it}}{|Du^\ez|^2+\ez}\phi^2\,dx\,dt
= -\frac{\ez}{p-2} \int_{Q_{2r}}[\ln [|D u^\ez|^2+\ez]]_t\phi^2\,dx\,dt\nonumber\\
&= \frac{2\ez}{p-2} \int_{Q_{2r}}\ln [|D u^\ez|^2+\ez]\phi\phi_t\,dx\,dt
{ -\frac{\ez}{p-2} \int_{B_{2r}}\ln [|D u^\ez(x,0)|^2+\ez]\phi^2(x,0)\,dx}.
\end{align}

Next, by Young's inequality  and \eqref{eq8.26}, one has
\begin{align}\label{constterm}
\left|\frac{4\ez}{p-2}\int_{Q_{2r}}\frac{u^\ez_{x_i}u^\ez_{t}\phi_{x_i}}{|Du^\ez|^2+\ez}\phi\,dx\,dt\right|& \le C\sqrt \ez \int_t|D^2u^\ez||\phi D\phi|\,dx\,dt\nonumber\\
&\le\eta  \int_{Q_{2r}}|D^2u^\ez|^2\phi^2 \,dx\,dt +\frac{C}{\eta} \ez\int_{Q_{2r}}|D\phi|^2\,dx\,dt.
\end{align}

Finally, by Young's inequality and noting
$$ 4\ez|Du^\ez|^2= (2\ez^{1/2}|Du^\ez|)^2 \le [|Du^\ez|^2+\ez]^2,$$
 we have
\begin{align*}
&\frac{4\ez}{p-2}\int_{Q_{2r}}\frac{\Delta_\fz u^\ez u^\ez_{t}}{[|Du^\ez|^2+\ez]^2}\phi^2\,dx\,dt\\
&\quad\le 4\ez^2\int_{Q_{2r}}\frac{|D^2u^\ez Du^\ez|^2|Du^\ez|^2}{[|Du^\ez|^2+\ez]^4}\phi^2\,dx\,dt
+\frac 1{(p-2)^2}\int_{Q_{2r}}(u^\ez_{t})^2\phi^2\,dx\,dt\\
&\quad\le    \ez\int_{Q_{2r}}\frac{|D^2u^\ez Du^\ez|^2}{[|Du^\ez|^2+\ez]^2}\phi^2\,dx\,dt+ \frac 1{(p-2)^2}\int_{Q_{2r}}(u^\ez_{t})^2\phi^2\,dx\,dt.
\end{align*}
Combining these estimates, we get  \eqref{les5}.
\end{proof}

\begin{proof}[Proof of  Lemma  \ref{du-cpll-2-2}]
By integration by parts, one gets
\begin{align*}
&\frac{1}{p-2}\int_{Q_{2r}}\Delta u^\ez u^\ez_t\phi^2\frac{|Du^\ez|^2}{|Du^\ez|^2+\ez}\,dx\,dt\\
&\quad=-\frac{1}{p-2}\int_{Q_{2r}}u^\ez_{x_i}\left(\frac{ u^\ez_t\phi^2|Du^\ez|^2}{|Du^\ez|^2+\ez}\right)_{x_i}\,dx\,dt\\
&\quad=-\frac{2}{p-2}\int_{Q_{2r}}\frac{\Delta_\fz u^\ez u^\ez_t\phi^2}{|Du^\ez|^2+\ez}\,dx\,dt
-\frac{1}{p-2}\int_{Q_{2r}}\frac{u^\ez_{x_i} u^\ez_{x_it}\phi^2|Du^\ez|^2}{|Du^\ez|^2+\ez}\,dx\,dt\\
&\quad\quad-\frac{2}{p-2}\int_{Q_{2r}}\frac{u^\ez_{x_i}\phi_{x_i} u^\ez_t\phi|Du^\ez|^2}{|Du^\ez|^2+\ez}\,dx\,dt
+\frac{2}{p-2}\int_{Q_{2r}}\frac{\Delta_\fz u^\ez u^\ez_t\phi^2|Du^\ez|^2}{[|Du^\ez|^2+\ez]^2}\,dx\,dt.
\end{align*}
Below, we bound the four terms on  the right-hand side in order.
First, from  Lemma \ref{vt-n}  it follows that
\begin{align*}
-\frac{2}{p-2}\int_{Q_{2r}}\frac{\Delta_\fz u^\ez u^\ez_t\phi^2}{|Du^\ez|^2+\ez}\,dx\,dt
&=-\frac{2}{(p-2)^2}\int_{Q_{2r}}(u^\ez_t)^2\phi^2\,dx\,dt
+\frac{2}{(p-2)^2}\int_{Q_{2r}}\Delta u^\ez u^\ez_t\phi^2\,dx\,dt\\
&\le -\frac{2}{(p-2)^2}\int_{Q_{2r}}(u^\ez_t)^2\phi^2\,dx\,dt+\eta \int_{Q_{2r}}|D^2 u^\ez|^2\phi^2\,dx\,dt\\
&\quad+
\frac{C}{(p-2)^2}\int_{Q_{2r}}|D u^\ez-{{\vec c}}|^2[|\phi||\phi_t|+|D\phi|^2]\,dx\,dt.
\end{align*}

 Next, by \eqref{lnterm}
\begin{align*}
 &-\frac{1}{p-2}\int_{Q_{2r}}\frac{u^\ez_{x_i}u^\ez_{x_it}\phi^2|Du^\ez|^2}{|Du^\ez|^2+\ez}\,dx\,dt\\
&\quad=-\frac{1}{p-2}\int_{Q_{2r}}u^\ez_{x_i}u^\ez_{x_it}\phi^2\,dx\,dt
+\frac{\ez}{p-2}\int_{Q_{2r}}\frac{\phi^2u^\ez_{x_i}u^\ez_{x_it}}{|Du^\ez|^2+\ez}\,dx\,dt\\
&\quad\le -\frac{1}{p-2}\int_{Q_{2r}}u^\ez_{x_i}u^\ez_{x_it}\phi^2\,dx\,dt+ \frac{\ez}{p-2} \int_{Q_{2r}}|\ln [|D u^\ez|^2+\ez]||\phi||\phi_t|\,dx\,dt\\
&\quad\quad { +\frac{\ez}{2(p-2)} \int_{B_{2r}}\ln [|D u^\ez(x,0)|^2+\ez]\phi^2(x,0)\,dx}.
\end{align*}

Moreover, by integration by parts,
\begin{align*}
&-\frac{2}{p-2}\int_{Q_{2r}}\frac{u^\ez_{x_i}\phi_{x_i} u^\ez_t\phi|Du^\ez|^2}{|Du^\ez|^2+\ez}\,dx\,dt\\
&\quad=-\frac{1}{p-2}\int_{Q_{2r}}u^\ez_{x_i}[\phi^2]_{x_i} u^\ez_t\,dx\,dt
+\frac{2\ez}{p-2}\int_{Q_{2r}}\frac{u^\ez_{x_i} \phi _{x_i} u^\ez_t}{|Du^\ez|^2+\ez}\phi\,dx\,dt\\
&\quad=\frac{1}{p-2}\int_{Q_{2r}}u^\ez_{x_i}u^\ez_{x_it}\phi^2 \,dx\,dt
+\frac{1}{p-2}\int_{Q_{2r}}(\Delta u^\ez)u^\ez_{t}\phi^2 \,dx\,dt+\frac{2\ez}{p-2}\int_{Q_{2r}}\frac{u^\ez_{x_i} \phi _{x_i} u^\ez_t}{|Du^\ez|^2+\ez}\phi\,dx\,dt.
\end{align*}
Applying Lemma \ref{div} and \eqref{constterm}, we get
\begin{align*}
&-\frac{2}{p-2}\int_{Q_{2r}}\frac{u^\ez_{x_i}\phi_{x_i} u^\ez_t\phi|Du^\ez|^2}{|Du^\ez|^2+\ez}\,dx\,dt\\
&\quad\le \frac{1}{p-2}\int_{Q_{2r}}u^\ez_{x_i}\phi^2 u^\ez_{x_it}\,dx\,dt
+C\int_{Q_{2r}}|Du^\ez-{{\vec c}}|^2[|D\phi|^2+|\phi||D^2\phi|+|\phi||\phi_t|] \,dx\,dt\\
&\quad\quad +\eta  \int_{Q_{2r}}|D^2u^\ez|^2\phi^2 \,dx\,dt +\frac{C}{\eta} \ez\int_{Q_{2r}}|D\phi|^2\,dx\,dt.
\end{align*}

 Finally, by  H\"older's inequality and  Young's inequality we obtain
\begin{align*}
 &\frac{2}{p-2}\int_{Q_{2r}}\frac{\Delta_\fz u^\ez u^\ez_t\phi^2|Du^\ez|^2}{[|Du^\ez|^2+\ez]^2}\,dx\,dt\\
&\quad\le \frac{2}{p-2}\int_{Q_{2r}}\frac{|D^2 u^\ez D u^\ez||Du^\ez||u^\ez_t|\phi^2 }{|Du^\ez|^2+\ez}\,dx\,dt\\
&\quad\le  \frac{1}{ (p-2)^2}\int_{Q_{2r}}(u^\ez_t)^2\phi^2\,dx\,dt
+ \int_{Q_{2r}}\frac{|D^2u^\ez Du^\ez|^2|Du^\ez|^2 }{[|Du^\ez|^2+\ez]^2 }\phi^2 \,dx\,dt \\
&\quad=\frac{1}{ (p-2)^2}\int_{Q_{2r}}(u^\ez_t)^2\phi^2\,dx\,dt+\int_{Q_{2r}}\frac{|D^2u^\ez Du^\ez|^2 }{ |Du^\ez|^2+\ez  }\phi^2 \,dx\,dt-\ez \int_{Q_{2r}}\frac{|D^2u^\ez Du^\ez|^2   }{[|Du^\ez|^2+\ez]^2 }\phi^2 \,dx\,dt.
\end{align*}
Combining above, we conclude  \eqref{les6}.
\end{proof}

\section{Proof of Theorem \ref{thm4}}

 Let $u=u(x,t) $ be a viscosity solution to  \eqref{p-plap}.
Given any   smooth domain $U\Subset \Omega$, and for any $\ez\in(0,1]$,
let $u^\ez\in W^{1,p}(U)\cap C^0(\overline U)$  be a weak solution to
\eqref{ap-par-plap}.
By the parabolic theory, it is known that $u^\ez\in C^\fz(U_T)$ and $u^\ez\to u$ in $C^0(U_T)$ as $\ez\to0$; see \cite{df85,w86} for example.

Using the divergence structure of \eqref{ap-par-plap}, one easily gets the following.
\begin{lem} \label{th5-1} Let $p\in(1,2)\cup(2,\fz)$.  { Then} we have
\begin{align}\label{uezt-1}
\int_{Q_{2r}}(u^\ez_t)^2\phi^2\,dx\,dt\le C
\int_{Q_{2r}}[|Du^\ez|^2+\ez]^{\frac{p}2}|\phi||\phi_t|\,dx\,dt+
C\int_{Q_{2r}}[|Du^\ez|^2+\ez]^{{p-1}}|D\phi|^2\,dx\,dt.
\end{align}\end{lem}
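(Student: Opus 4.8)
The plan is to test the regularized equation \eqref{ap-par-plap} against $u^\ez_t\phi^2$. Since $u^\ez\in C^\fz(U_T)$ and $\phi$ is compactly supported in the cylinder, this is a legitimate test function and all the integrations by parts below (in space and in time) carry no boundary terms. Write $a^\ez:=[|Du^\ez|^2+\ez]^{\frac{p-2}2}$. Multiplying \eqref{ap-par-plap} by $u^\ez_t\phi^2$, integrating over $Q_{2r}$ and integrating by parts in the spatial variables, we obtain
$$\int_{Q_{2r}}(u^\ez_t)^2\phi^2\,dx\,dt=-\int_{Q_{2r}}a^\ez Du^\ez\cdot Du^\ez_t\,\phi^2\,dx\,dt-2\int_{Q_{2r}}a^\ez u^\ez_t\,Du^\ez\cdot D\phi\,\phi\,dx\,dt.$$

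First I would treat the first term on the right-hand side. The key algebraic observation is that it is a total $t$-derivative:
$$a^\ez Du^\ez\cdot Du^\ez_t=[|Du^\ez|^2+\ez]^{\frac{p-2}2}\,\tfrac12\partial_t\big(|Du^\ez|^2\big)=\tfrac1p\,\partial_t\big([|Du^\ez|^2+\ez]^{\frac p2}\big),$$
so that, integrating by parts in $t$,
$$-\int_{Q_{2r}}a^\ez Du^\ez\cdot Du^\ez_t\,\phi^2\,dx\,dt=\frac2p\int_{Q_{2r}}[|Du^\ez|^2+\ez]^{\frac p2}\phi\phi_t\,dx\,dt\le\frac2p\int_{Q_{2r}}[|Du^\ez|^2+\ez]^{\frac p2}|\phi||\phi_t|\,dx\,dt.$$
For the second term I would use the Cauchy--Schwarz and Young inequalities: since $|a^\ez u^\ez_t\,Du^\ez\cdot D\phi\,\phi|\le[|Du^\ez|^2+\ez]^{\frac{p-2}2}|Du^\ez|\,|u^\ez_t|\,|D\phi|\,|\phi|$,
$$2\Big|\int_{Q_{2r}}a^\ez u^\ez_t\,Du^\ez\cdot D\phi\,\phi\,dx\,dt\Big|\le\tfrac12\int_{Q_{2r}}(u^\ez_t)^2\phi^2\,dx\,dt+2\int_{Q_{2r}}[|Du^\ez|^2+\ez]^{p-2}|Du^\ez|^2|D\phi|^2\,dx\,dt,$$
and then bound $[|Du^\ez|^2+\ez]^{p-2}|Du^\ez|^2\le[|Du^\ez|^2+\ez]^{p-1}$. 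Absorbing the term $\tfrac12\int_{Q_{2r}}(u^\ez_t)^2\phi^2$ into the left-hand side yields \eqref{uezt-1}.

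This argument is essentially bookkeeping and poses no real obstacle; the only point deserving a word of care is the admissibility of $u^\ez_t\phi^2$ as a test function, which is immediate here because $u^\ez\in C^\fz(U_T)$ (otherwise one would first run the computation with Steklov time-averages of $u^\ez$ and pass to the limit). I also note that the constants that appear ($\tfrac2p$, $2$, and the absorbed $\tfrac12$) remain bounded as $p\to2$, consistent with the remark on the non-degeneracy of the constants.
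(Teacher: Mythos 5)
Your argument is essentially identical to the paper's: test the regularized equation against $u^\ez_t\phi^2$, integrate by parts in space, identify the good term as a total $t$-derivative of $\tfrac1p[|Du^\ez|^2+\ez]^{p/2}$, integrate by parts in $t$, and absorb the cross term via Young's inequality. One caveat: your claim that the $t$-integration by parts carries no boundary terms is not quite right. With the paper's convention $Q_{2r}=(-4r^2,0)\times B_{2r}$ and a cutoff $\phi$ that equals $1$ on $Q_r$ (so $\phi(\cdot,0)\not\equiv 0$), integrating by parts in $t$ produces the boundary contribution $-\tfrac1p\int_{B_{2r}}[|Du^\ez(x,0)|^2+\ez]^{p/2}\phi^2(x,0)\,dx$, which is nonpositive and must therefore be dropped rather than being identically zero. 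This is exactly why the paper writes $\le$ instead of $=$ at that step; your final chain of inequalities is still correct because you pass to $|\phi||\phi_t|$ afterward, but the intermediate equality sign should be an inequality.
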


\begin{proof}
By integration by parts we obtain \begin{align*}
&\int_{Q_{2r}}(u^\ez_t)^2\phi^2\,dx\,dt=\int_{Q_{2r}}u^\ez_t{\rm div}([|Du^\ez|^2+\ez]^{\frac{p-2}2}Du)\phi^2\,dx\,dt\\
&\quad=-\int_{Q_{2r}}u^\ez_{x_it}u^\ez_{x_i}[|Du^\ez|^2+\ez]^{\frac{p-2}2}\phi^2\,dx\,dt
-2\int_{Q_{2r}}u^\ez_{t}u^\ez_{x_i}\phi_{x_i}\phi[|Du^\ez|^2+\ez]^{\frac{p-2}2}\,dx\,dt
\end{align*}
 and
\begin{align*}
-\int_{Q_{2r}}u^\ez_{x_it}u^\ez_{x_i}[|Du^\ez|^2+\ez]^{\frac{p-2}2}\phi^2\,dx\,dt
&=-\frac{1}{p}\int_{Q_{2r}}([|Du^\ez|^2+\ez]^{\frac{p}2})_t\phi^2\,dx\,dt\\
&{ \le} \frac{2}{p}\int_{Q_{2r}}[|Du^\ez|^2+\ez]^{\frac{p}2}\phi\phi_t\,dx\,dt.
\end{align*}
By Young's inequality, we have
\begin{align*}
&-2\int_{Q_{2r}}u^\ez_{t}u^\ez_{x_i}\phi_{x_i}\phi[|Du^\ez|^2+\ez]^{\frac{p-2}2}\,dx\,dt \le \frac{1}{2}\int_{Q_{2r}}(u^\ez_t)^2\phi^2\,dx\,dt
+C\int_{Q_{2r}}|D\phi|^2[|Du^\ez|^2+\ez]^{{p-1}}\,dx\,dt.
\end{align*}
Combining the above estimates, we obtain \eqref{uezt-1}.
 \end{proof}

Next, we have the following, whose proof is postponed to the end of this section.
\begin{lem}\label{th5p3}
  { For any}  $p\in(1,2)\cup(2,3)$,  we have
\begin{align}\label{duez-1}
\int_{Q_{2r}}|D^2u^\ez|^2\phi^2\,dx\,dt
& \le C(n,p)\int_{Q_{2r}}|Du^\ez |^2[|D\phi|^2+|\phi ||D^2\phi|]\,dx\,dt\nonumber\\
& \quad+  C(n,p) \int_{Q_{2r}}[|Du^\ez |^2+\ez]^{\frac{4-p}2} |\phi||\phi_t| \,dx\,dt.
\end{align}

\end{lem}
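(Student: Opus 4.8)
The plan is to apply the fundamental inequality \eqref{keyineqx} to the regularized solution $u^\ez$ of \eqref{ap-par-plap}, to trade the curvature quantities $\Delta u^\ez,\Delta_\fz u^\ez$ for $u^\ez_t$ via the non-divergence form of \eqref{ap-par-plap},
\begin{equation*}
u^\ez_t\,[|Du^\ez|^2+\ez]^{\frac{2-p}2}=\Delta u^\ez+(p-2)\,\frac{\Delta_\fz u^\ez}{|Du^\ez|^2+\ez}\qquad\text{in }U_T,
\end{equation*}
and then to integrate against $\phi^2$. Performing the algebra indicated in Subsection 1.4 (divide \eqref{keyineqx}, written for $u^\ez$, by $|Du^\ez|^2+\ez$, substitute the identity above, and collect), one arrives at the pointwise inequality displayed there, valid a.e.\ in $U_T$: its left-hand side is $\big[\frac n{2(p-2)^2}-\frac n2\big]|D^2u^\ez|^2$, and its right-hand side is a positive multiple of $|D^2u^\ez|^2-(\Delta u^\ez)^2$ plus a collection of terms built from $(u^\ez_t)^2[|Du^\ez|^2+\ez]^{2-p}$, $\Delta u^\ez u^\ez_t[|Du^\ez|^2+\ez]^{\frac{2-p}2}$, $(\Delta u^\ez)^2$, $(\Delta_\fz u^\ez)^2/[|Du^\ez|^2+\ez]^2$, and the $\ez$-residue $\frac\ez2(\Delta u^\ez)^2/(|Du^\ez|^2+\ez)$. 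Since $p\in(1,2)\cup(2,3)$ is equivalent to $(p-2)^2<1$, the coefficient $\frac n{2(p-2)^2}-\frac n2$ is strictly positive (and so is $\frac n{2(p-2)^2}-\frac12$); it vanishes precisely at $p=1,3$, which is why the range in Theorem \ref{thm4}(ii) is sharp. Multiplying by $\phi^2$ and integrating over $Q_{2r}$, it remains to estimate the integral of the right-hand side.

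The term $|D^2u^\ez|^2-(\Delta u^\ez)^2$ is controlled by its divergence structure \eqref{e8.03}: applying Lemma \ref{div} for a.e.\ fixed $t$ (all its operators being spatial) with $\vec c=0$ and integrating in $t$ bounds $\big|\int_{Q_{2r}}[|D^2u^\ez|^2-(\Delta u^\ez)^2]\phi^2\big|$ by $C(n)\int_{Q_{2r}}|Du^\ez|^2[|\phi||D^2\phi|+|D\phi|^2]$, already of the form in \eqref{duez-1}. The only term producing the $\phi\phi_t$ contribution of \eqref{duez-1} is $\int_{Q_{2r}}\Delta u^\ez u^\ez_t[|Du^\ez|^2+\ez]^{\frac{2-p}2}\phi^2$: integrating by parts in $x$, the portion carrying $u^\ez_{x_i}u^\ez_{x_it}=\tfrac12\partial_t(|Du^\ez|^2+\ez)$ is rewritten through $[|Du^\ez|^2+\ez]^{\frac{2-p}2}\partial_t(|Du^\ez|^2+\ez)=\tfrac2{4-p}\partial_t[|Du^\ez|^2+\ez]^{\frac{4-p}2}$ (legitimate because $4-p>0$), and an integration by parts in $t$ turns it into $\tfrac2{4-p}\int_{Q_{2r}}[|Du^\ez|^2+\ez]^{\frac{4-p}2}\phi\phi_t$. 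The cross terms from this integration by parts are dominated, via $|u^\ez_{x_i}|\le|Du^\ez|$ and $|u^\ez_t|[|Du^\ez|^2+\ez]^{\frac{2-p}2}\le(\sqrt n+|p-2|)|D^2u^\ez|$ (consequences of the non-divergence form and the Cauchy--Schwarz inequality), by Young's inequality by $\eta\int_{Q_{2r}}|D^2u^\ez|^2\phi^2+C(n,p,\eta)\int_{Q_{2r}}|Du^\ez|^2|D\phi|^2$.

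It remains to handle the genuinely bulk terms — $(u^\ez_t)^2[|Du^\ez|^2+\ez]^{2-p}$, the various $\Delta u^\ez u^\ez_t$-couplings (including the interior one, of the form $\int_{Q_{2r}}u^\ez_t\frac{\Delta_\fz u^\ez}{|Du^\ez|^2+\ez}[|Du^\ez|^2+\ez]^{\frac{2-p}2}\phi^2$, left over from the previous integration by parts), $(\Delta u^\ez)^2$, $(\Delta_\fz u^\ez)^2/[|Du^\ez|^2+\ez]^2$, and the $\ez$-residue. Using the non-divergence form once more, in the shape $u^\ez_t[|Du^\ez|^2+\ez]^{\frac{2-p}2}-\Delta u^\ez=(p-2)\frac{\Delta_\fz u^\ez}{|Du^\ez|^2+\ez}$, one re-expresses $(u^\ez_t)^2[|Du^\ez|^2+\ez]^{2-p}$ and all the $\Delta u^\ez u^\ez_t$-couplings through $(\Delta u^\ez)^2$ and $(\Delta_\fz u^\ez)^2/[|Du^\ez|^2+\ez]^2$; one then uses $-(\Delta u^\ez)^2+\tfrac\ez2\tfrac{(\Delta u^\ez)^2}{|Du^\ez|^2+\ez}\le-\tfrac12(\Delta u^\ez)^2$, the Cauchy--Schwarz bound $(\Delta_\fz u^\ez)^2/[|Du^\ez|^2+\ez]^2\le|D^2u^\ez Du^\ez|^2/[|Du^\ez|^2+\ez]$, and, for the $\ez$-residue, an integration by parts in $x$ as in the proof of Lemma \ref{ez-n}. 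Here the exact algebraic shape of \eqref{keyineqx} is decisive: after these substitutions the terms recombine so that their total contribution to $\int_{Q_{2r}}|D^2u^\ez|^2\phi^2$ does not exceed the already-budgeted coefficient $\frac n{2(p-2)^2}-\frac n2$, up to an $\eta$-fraction and $|Du^\ez|^2$-weighted gradient terms. Choosing $\eta$ small relative to $\frac n{2(p-2)^2}-\frac n2$ and absorbing the $\eta\int_{Q_{2r}}|D^2u^\ez|^2\phi^2$ contributions into the left-hand side then yields \eqref{duez-1}.

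The main difficulty is precisely this recombination. Several of the coupling and second-order terms produced by the integrations by parts are a priori of the same order, $|D^2u^\ez|^2$, as the quantity being bounded, and a crude Cauchy--Schwarz estimate on them would cost a non-small multiple of $\int_{Q_{2r}}|D^2u^\ez|^2\phi^2$ — enough to swamp the gain $\frac n{2(p-2)^2}-\frac n2$, which already tends to $0$ as $p\to1^+$ or $p\to3^-$. One has to track, term by term, the cancellations supplied by \eqref{keyineqx} together with the equation, so that the dangerous pieces reassemble into the budgeted coefficient and only the two genuinely lower-order structures on the right of \eqref{duez-1} survive. (The bookkeeping in this step is most conveniently isolated into two auxiliary integral lemmas, playing the roles that Lemmas \ref{vt-n} and \ref{ez-n} play in the parabolic normalized case.)
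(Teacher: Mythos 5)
Your high-level outline matches the paper's: derive the pointwise inequality \eqref{pi-pn-plap-1} from Lemma~\ref{keylem} and the non-divergence form of \eqref{ap-par-plap}, integrate against $\phi^2$, control $|D^2u^\ez|^2-(\Delta u^\ez)^2$ by Lemma~\ref{div}, produce the $\phi\phi_t$ term by an integration by parts in time via $[|Du^\ez|^2+\ez]^{\frac{2-p}2}\partial_t(|Du^\ez|^2)=\tfrac2{4-p}\partial_t[|Du^\ez|^2+\ez]^{\frac{4-p}2}$, and absorb $\eta$-multiples of $\int|D^2u^\ez|^2\phi^2$ into the budget $\tfrac n{2(p-2)^2}-\tfrac n2>0$. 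You also correctly note that the argument must be organized into two auxiliary integral lemmas (these are Lemmas~\ref{th5-1-1} and~\ref{th} in the paper), and that the coefficient degenerates as $p\to 1^+$ or $p\to 3^-$, so the residual contribution of $|D^2u^\ez|^2$ from the right-hand side must be made \emph{arbitrarily} small, not merely bounded.

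However, precisely at that crucial step your proposal has a genuine gap: the recombination that makes the dangerous terms disappear is never carried out, and the tools you propose for it would not do the job. After the integration by parts and the substitution $u^\ez_t[|Du^\ez|^2+\ez]^{\frac{2-p}2}=\Delta u^\ez+(p-2)\Delta_\fz u^\ez/(|Du^\ez|^2+\ez)$, the bracketed terms reduce (as in the paper's Lemma~\ref{th}) to
$-(\Delta u^\ez)^2+\tfrac\ez2\tfrac{(\Delta u^\ez)^2}{|Du^\ez|^2+\ez}+\tfrac{\Delta_\fz u^\ez\,\Delta u^\ez}{|Du^\ez|^2+\ez}$
(the $-(p-2)(\Delta_\fz u^\ez)^2/[\,\cdot\,]^2$ is cancelled \emph{exactly} by the identical piece produced by the substitution). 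At this point the paper's Young inequality is
$\tfrac{\Delta_\fz u^\ez\,\Delta u^\ez}{|Du^\ez|^2+\ez}\le\tfrac{|D^2u^\ez|\,|Du^\ez|^2\,|\Delta u^\ez|}{|Du^\ez|^2+\ez}\le\tfrac12\tfrac{(\Delta u^\ez)^2|Du^\ez|^2}{|Du^\ez|^2+\ez}+\tfrac12|D^2u^\ez|^2$,
and upon adding this to $-(\Delta u^\ez)^2+\tfrac\ez2\tfrac{(\Delta u^\ez)^2}{|Du^\ez|^2+\ez}$ both the $(\Delta u^\ez)^2$ part and the $\ez$-residue cancel \emph{exactly}, leaving $\tfrac12[|D^2u^\ez|^2-(\Delta u^\ez)^2]$, a divergence controllable by Lemma~\ref{div}. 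Your proposal instead suggests $-(\Delta u^\ez)^2+\tfrac\ez2\tfrac{(\Delta u^\ez)^2}{|Du^\ez|^2+\ez}\le-\tfrac12(\Delta u^\ez)^2$ followed by the Cauchy--Schwarz bound $(\Delta_\fz u^\ez)^2/[\,\cdot\,]^2\le|D^2u^\ez Du^\ez|^2/(|Du^\ez|^2+\ez)$ (and a redundant integration by parts for the $\ez$-residue). With these tools one gets, at best, $\tfrac12(\Delta_\fz u^\ez)^2/[\,\cdot\,]^2\le\tfrac12|D^2u^\ez|^2$ — a fixed $\tfrac12$-multiple of the very quantity being bounded, which cannot be made small by choosing $\eta$ and which swamps the gain $\tfrac n{2(p-2)^2}-\tfrac n2$ for $p$ near $1$ or $3$. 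This is exactly the danger you flag in your closing paragraph, but the write-up neither states nor proves the two auxiliary lemmas, nor does it supply the particular Young inequality and exact cancellations (including the cancellation of the $\ez$-residue and of the $(\Delta_\fz u^\ez)^2$-terms, and the use of the special \emph{combination} $\tfrac12(u^\ez_t)^2[\,\cdot\,]^{2-p}-\Delta u^\ez u^\ez_t[\,\cdot\,]^{\frac{2-p}2}$ rather than the two terms separately) that make the bookkeeping close. Without that, the argument does not establish \eqref{duez-1}.
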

 Theorem \ref{thm4} then follows from Lemmas \ref{th5-1} and \ref{th5p3} as follows.
\begin{proof}[Proof of Theorem \ref{thm4}]  { From Lemmas \ref{th5-1}, \ref{th5p3},  and $Du^\ez\in L^\fz(U_T)$ uniformly in $\ez>0$,} we conclude { that} $D^2 u^\ez,u^\ez_t\in L^2_\loc(U_T)$ uniformly in $\ez>0$.  By   the parabolic compact embedding theorem,
$Du^\ez\to D u$ in $L^2_\loc(U_T)$,  and $D^2u^\ez\to D^2u$ and $u^\ez_t\to u_t$ weakly in $L^2_\loc(U_T)$ as $\ez\to0$.
Letting $\ez\to0$, we conclude the proof of Theorem \ref{thm4}.
\end{proof}

 \begin{rem} \label{p-plap-cor}\rm
Due to the possible degeneracy of $Du^\ez$
when $p\ne 2$, one cannot expect the parabolic Coders condition \eqref{parcordes} holds for the principle coefficients of the equation \eqref{ap-par-plap} uniformly in $\ez>0$.
\end{rem}

Finally, we
 prove Lemma \ref{th5p3}. Firstly we  derive the following inequality from \eqref{keyineqx}:
\begin{align}\label{pi-pn-plap-1}
&[\frac{n}{2(p-2)^2}-\frac n2]|D^2u^\ez|^2   \nonumber  \\
&\quad\le[ \frac{n}{2(p-2)^2}-\frac 12][|D^2u^\ez|^2-(\Delta u^\ez)^2]\nonumber\\
&\quad\quad- \frac{n-2p+4}{(p-2)^2} [\frac12(u^\ez_t)^2(|Du^\ez|^2+\ez)^{2-p}- \Delta u^\ez u^\ez_t(|Du^\ez|^2+\ez)^{\frac{2-p}2}]\nonumber \\
&\quad\quad+\{-(\Delta u^\ez)^2-(p-2)\frac{(\Delta_\fz u^\ez)^2}{[|Du^\ez|^2+\ez]^2}+\frac{\Delta u^\ez u^\ez_t}{p-2}[|Du^\ez|^2+\ez]^{\frac{2-p}2}+ \frac{\ez}{2}\frac{(\Delta u^\ez)^2}{|Du^\ez|^2+\ez}\}.
\end{align}
Indeed, applying  \eqref{keyineqx} to $u^\ez$ and using \eqref{ap-par-plap}, similarly to \eqref{pi-pn-plap}   we have
 \begin{align*}
 & \frac{n}{2}\frac{|D^2u^\ez D u^\ez|^2}{|Du^\ez|^2+\ez}+[\frac{1}{p-2}
-\frac{n-2}{2}](\Delta u^\ez)^2
\nonumber\\
&\quad\le\frac{n-1}{2}[|D^2u^\ez|^2-(\Delta u^\ez)^2]+\frac{\Delta u^\ez u^\ez_t}{p-2}[|Du^\ez|^2+\ez]^{\frac{2-p}2}+ \frac{\ez}{2}\frac{(\Delta u^\ez)^2}{|Du^\ez|^2+\ez}.
\end{align*}
Thus,  by using H\"older's inequality and rearranging terms,
 \begin{align*}
 & [\frac{n}{2}-(p-2)]\frac{(\Delta_\fz u^\ez)^2}{[|Du^\ez|^2+\ez]^2}+[\frac{1}{p-2}
- \frac{n}{2} ](\Delta u^\ez)^2
\nonumber\\
&\quad\le\frac{n-1}{2}[|D^2u^\ez|^2-(\Delta u^\ez)^2]\\
&\quad\quad-(\Delta u^\ez)^2-  (p-2)\frac{(\Delta_\fz u^\ez)^2}{[|Du^\ez|^2+\ez]^2}+\frac{\Delta u^\ez u^\ez_t}{p-2}[|Du^\ez|^2+\ez]^{\frac{2-p}2}+ \frac{\ez}{2}\frac{(\Delta u^\ez)^2}{|Du^\ez|^2+\ez}.
\end{align*}
 Using \eqref{ap-par-plap} again, we have
  \begin{align*} & [\frac{n}{2}-(p-2)]\frac{(\Delta_\fz u^\ez)^2}{[|Du^\ez|^2+\ez]^2}+[\frac{1}{p-2}
- \frac{n}{2} ](\Delta u^\ez)^2
\nonumber\\
& =[\frac{n}{2(p-2)^2}-\frac n2]  (\Delta u^\ez)^2 + \frac{n-2p+4}{(p-2)^2} [\frac12(u^\ez_t)^2(|Du^\ez|^2+\ez)^{2-p}-\Delta u^\ez u^\ez_t(|Du^\ez|^2+\ez)^{\frac{2-p}2}].
\end{align*}
Write
\begin{align*}[\frac{n}{2(p-2)^2}-\frac n2]  (\Delta u^\ez)^2
 & =[\frac{n}{2(p-2)^2}-\frac n2]  |D^2u^\ez|^2 -[\frac{n}{2(p-2)^2}-\frac n2] [|D^2u^\ez|^2- (\Delta u^\ez)^2].
\end{align*}
We therefore obtain \eqref{pi-pn-plap-1}.

Next, we have the following  estimate for  the  second term on the right-hand side of \eqref{pi-pn-plap-1}.
 \begin{lem} \label{th5-1-1} Let $p\in(1,2)\cup(2,{ 4})$. For  any $\eta>0$,
 we have
 \begin{align}\label{uezt-2}
&  \left|\int_{Q_{2r}}u^\ez_t \Delta u^\ez [|D u^\ez|^2+\ez]^{\frac{2-p}2}\phi^2\,dx\,dt-\frac12\int_{Q_{2r}}(u^\ez_{t})^2       [|D u^\ez|^2+\ez]^{  {2- p }}\phi^2\,dx\,dt\right|\nonumber\\
&\quad\le \eta
 \int_{Q_{2r}}|D^2u^\ez|^2\phi^2\,dx\,dt+C(\eta)\int_{Q_{2r}}  |Du^\ez|^2 |D\phi|^2 \,dx\,dt\nonumber\\ &\qquad  +\frac {1}{ 4-p }\int_{Q_{2r}}  [|D u^\ez|^2+\ez]^{\frac{4-p}2} |\phi||\phi_t|\,dx\,dt.
\end{align}
 \end{lem}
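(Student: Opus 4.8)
The plan is to integrate by parts, using the divergence form of the equation \eqref{ap-par-plap} to generate, out of the left-hand side of \eqref{uezt-2}, exactly three things: the cutoff term $\frac1{4-p}\int G^{\frac{4-p}2}|\phi||\phi_t|$ already present on the right of \eqref{uezt-2}, a spatial‑cutoff error term that Young's inequality disposes of, and a second copy of the quantity being estimated; one then solves the resulting identity for that quantity. Throughout write $G:=|Du^\ez|^2+\ez$ and let $A:=\int_{Q_{2r}}u^\ez_t\,\Delta u^\ez\,G^{\frac{2-p}2}\phi^2\,dx\,dt-\tfrac12\int_{Q_{2r}}(u^\ez_t)^2G^{2-p}\phi^2\,dx\,dt$ be the expression inside the absolute value.

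First I would integrate by parts in the $x$–variables in $\int u^\ez_t\,\Delta u^\ez\,G^{\frac{2-p}2}\phi^2$, moving one derivative off $\Delta u^\ez=u^\ez_{x_ix_i}$. Using $(u^\ez_{x_i})_t\,u^\ez_{x_i}=\tfrac12 G_t$ and $G_{x_i}u^\ez_{x_i}=2\Delta_\fz u^\ez$ this gives
\[
\int_{Q_{2r}}u^\ez_t\,\Delta u^\ez\,G^{\frac{2-p}2}\phi^2\,dx\,dt=-\tfrac12\int_{Q_{2r}}G_t\,G^{\frac{2-p}2}\phi^2\,dx\,dt+(p-2)\int_{Q_{2r}}u^\ez_t\,\Delta_\fz u^\ez\,G^{-\frac p2}\phi^2\,dx\,dt-2\int_{Q_{2r}}u^\ez_t\,G^{\frac{2-p}2}\phi\,(Du^\ez\cdot D\phi)\,dx\,dt.
\]
Since $p\neq 4$, $G_t\,G^{\frac{2-p}2}=\frac2{4-p}\big(G^{\frac{4-p}2}\big)_t$, so integrating the first term by parts in $t$ produces $\frac2{4-p}\int G^{\frac{4-p}2}\phi\phi_t\,dx\,dt$ together with a boundary integral $-\frac1{4-p}\int G^{\frac{4-p}2}\phi^2$ over the top time–slice of $Q_{2r}$; for $p<4$ this slice term is $\le 0$, so it only enters with the favorable sign in the Caccioppoli inequality of Lemma \ref{th5p3} and can be discarded there.

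The heart of the matter is the middle term. Writing \eqref{ap-par-plap} in non‑divergence form as $u^\ez_t\,G^{\frac{2-p}2}=\Delta u^\ez+(p-2)\,\Delta_\fz u^\ez/G$, hence $\Delta_\fz u^\ez/G=\frac1{p-2}\big(u^\ez_t\,G^{\frac{2-p}2}-\Delta u^\ez\big)$, a direct substitution gives
\[
(p-2)\int_{Q_{2r}}u^\ez_t\,\Delta_\fz u^\ez\,G^{-\frac p2}\phi^2\,dx\,dt-\tfrac12\int_{Q_{2r}}(u^\ez_t)^2G^{2-p}\phi^2\,dx\,dt=-A.
\]
Combining the last two displays yields the self‑referential identity $2A=\frac2{4-p}\int G^{\frac{4-p}2}\phi\phi_t\,dx\,dt-(\text{top-slice term})-2\int u^\ez_t\,G^{\frac{2-p}2}\phi\,(Du^\ez\cdot D\phi)\,dx\,dt$. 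It then remains to estimate the cutoff error term: by \eqref{ap-par-plap} again, $\big|u^\ez_t\,G^{\frac{2-p}2}\big|=\big|\Delta u^\ez+(p-2)\Delta_\fz u^\ez/G\big|\le(\sqrt n+|p-2|)|D^2u^\ez|$, using $|\Delta_\fz u^\ez|\le|D^2u^\ez||Du^\ez|^2\le|D^2u^\ez|G$, so Young's inequality bounds $2\big|\int u^\ez_t G^{\frac{2-p}2}\phi(Du^\ez\cdot D\phi)\big|$ by $\eta\int|D^2u^\ez|^2\phi^2+C(n,p,\eta)\int|Du^\ez|^2|D\phi|^2$, and \eqref{uezt-2} follows after dropping the non‑positive top‑slice term. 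The only delicate point is the algebraic recombination producing the $-A$ above: a naive Young‑type estimate on $(p-2)\int u^\ez_t\Delta_\fz u^\ez G^{-p/2}\phi^2$ and $\tfrac12\int(u^\ez_t)^2G^{2-p}\phi^2$ separately would only give $\int|D^2u^\ez|^2\phi^2$ with an $O(1)$ coefficient, which is useless since this term must later be absorbed into the left side of \eqref{pi-pn-plap-1}; it is precisely the cancellation forcing $A$ to reappear that keeps the coefficient of $\int|D^2u^\ez|^2\phi^2$ arbitrarily small.
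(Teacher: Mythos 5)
Your proof is correct and follows essentially the same route as the paper: integrate by parts in $x$ to expose a $\Delta_\fz u^\ez$–term plus a $G_t$–term, use the non-divergence form of \eqref{ap-par-plap} to convert the $\Delta_\fz u^\ez$–term into $-A$ plus a multiple of $\int(u^\ez_t)^2G^{2-p}\phi^2$ (the paper phrases this as ``adding the two equalities,'' you phrase it as solving a self-referential identity, but it is the same cancellation), integrate by parts in $t$ to produce the $\frac1{4-p}\int G^{\frac{4-p}2}|\phi||\phi_t|$ term, and Young-estimate the spatial cutoff term via $|u^\ez_t|G^{\frac{2-p}2}\lesssim|D^2u^\ez|$. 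Your additional remark that the top time-slice term has a favorable sign and is tacitly dropped is a small honest improvement in bookkeeping over the paper's one-line ``$\le$,'' but it is the same proof.
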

 \begin{proof}
By integration by parts, one has
\begin{align*}
&  \int_{Q_{2r}}u^\ez_t \Delta u^\ez [|D u^\ez|^2+\ez]^{\frac{2-p}2}\phi^2\,dx\,dt\\
&\quad=-\int_{Q_{2r}}u^\ez_{tx_i} u^\ez_{x_i}[|D u^\ez|^2+\ez]^{\frac{2-p}2}\phi^2\,dx\,dt
+(p-2)
 \int_{Q_{2r}}u^\ez_{t}    \frac{\Delta_\fz  u^\ez } {|Du^\ez|^2+\ez }[|D u^\ez|^2+\ez]^{ \frac{2- p }2}\phi^2\,dx\,dt \\
&\quad\quad - 2\int_{Q_{2r}}u^\ez_{t}\phi\phi_{x_i} u^\ez_{x_i}[|D u^\ez|^2+\ez]^{\frac{2-p}2}\,dx\,dt.
\end{align*}
By \eqref{ap-par-plap} we have
\begin{align*}
&(p-2)
 \int_{Q_{2r}}u^\ez_{t}    \frac{\Delta_\fz  u^\ez } {|Du^\ez|^2+\ez }[|D u^\ez|^2+\ez]^{ \frac{2- p }2}\phi^2\,dx\,dt\\
 &\quad=
 \int_{Q_{2r}}(u^\ez_{t})^2       [|D u^\ez|^2+\ez]^{  {2- p }}\phi^2\,dx\,dt-
 \int_{Q_{2r}}u^\ez_{t} \Delta u^\ez      [|D u^\ez|^2+\ez]^{ \frac{2- p }2}\phi^2\,dx\,dt.
 &\end{align*}
 Adding the above two equalities gives
 \begin{align*}
&  \int_{Q_{2r}}u^\ez_t \Delta u^\ez [|D u^\ez|^2+\ez]^{\frac{2-p}2}\phi^2\,dx\,dt-\frac12
 \int_{Q_{2r}}(u^\ez_{t})^2       [|D u^\ez|^2+\ez]^{  {2- p }}\phi^2\,dx\,dt\\
&\quad=-\frac1{2}\int_{Q_{2r}}u^\ez_{tx_i} u^\ez_{x_i}[|D u^\ez|^2+\ez]^{\frac{2-p}2}\phi^2\,dx\,dt
  -  \int_{Q_{2r}}u^\ez_{t}\phi\phi_{x_i} u^\ez_{x_i}[|D u^\ez|^2+\ez]^{\frac{2-p}2}\,dx\,dt.
\end{align*}
Noting that  by \eqref{ap-par-plap},
\begin{equation}
                                    \label{eq8.37}
|u^\ez_{t}|[|D u^\ez|^2+\ez]^{\frac{2-p}2}\le (p+n)|D^2u^\ez|.
\end{equation}
By Young's inequality  and \eqref{eq8.37}, one has
 \begin{align*}&\left| \int_{Q_{2r}}u^\ez_{t}\phi\phi_{x_i} u^\ez_{x_i}[|D u^\ez|^2+\ez]^{\frac{2-p}2}\,dx\,dt\right|\\
 &\quad \le \eta \int_{Q_{2r}}(u^\ez_{t})^2[|D u^\ez|^2+\ez]^{2-p}\phi^2 \,dx\,dt+
C(\eta)\int_{Q_{2r}}  |Du^\ez|^2 |D\phi|^2 \,dx\,dt\\
 &\quad \le (p+n) \eta\int_{Q_{2r}}|D^2u^\ez|^2\phi^2 \,dx\,dt+
C(\eta)\int_{Q_{2r}}  |Du^\ez|^2 |D\phi|^2 \,dx\,dt\end{align*}
and, by integration by parts,  \begin{align*}
-\frac1{2}\int_{Q_{2r}}u^\ez_{tx_i} u^\ez_{x_i}[|D u^\ez|^2+\ez]^{\frac{2-p}2}\phi^2\,dx\,dt&= -\frac1{2  (4-p) }\int_{Q_{2r}} ([|D u^\ez|^2+\ez]^{\frac{4-p}2})_t\phi^2\,dx\,dt\\
&\le  \frac1{ 4-p }\int_{Q_{2r}}  [|D u^\ez|^2+\ez]^{\frac{4-p}2} |\phi||\phi_t|\,dx\,dt.
\end{align*}
Combining all estimates above, we obtain \eqref{uezt-2}.
\end{proof}

Moreover, the last term  on the right-hand  of \eqref{pi-pn-plap-1} will be estimated as follows.

\begin{lem}  \label{th}Let $p\in(1,2)\cup(2,\fz)$. For any $\eta>0$,  we have
\begin{align}\label{pcla1}
&-\int_{Q_{2r}}(\Delta u^\ez)^2
 \phi^2\,dx\,dt-(p-2)\int_{Q_{2r}}  \frac{(\Delta_\fz u^\ez)^2}{[|Du^\ez|^2+\ez]^2}  \phi^2\,dx\,dt \nonumber\\
 &\quad\quad+\frac{1}{p-2}\int_{Q_{2r}}u_t\Delta u^\ez[|Du^\ez|^2+\ez]^{\frac{2-p}2}\phi^2\,dx\,dt
 +\frac{\ez}{2}\int_{Q_{2r}}\frac{(\Delta u^\ez)^2}{|Du^\ez|^2+\ez}\phi^2\,dx\,dt\nonumber\\
 &\quad\le    \eta \int_{Q_{2r}}|D^2u^\ez|^2 \phi^2 \,dx\,dt + C(n,p,\eta)\int_{Q_{2r}} |Du^\ez|^2[|D^2\phi|+|D\phi|^2]\,dx\,dt\nonumber\\
 &\quad\quad
+C(n,p)\int_{Q_{2r}}[|Du^\ez|^2+\ez]^{\frac{4-p}2}|\phi||\phi_t|\,dx\,dt.
\end{align}
 \end{lem}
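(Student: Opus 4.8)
The plan is to integrate the four terms on the left of \eqref{pcla1} against $\phi^2$ over $Q_{2r}$, using throughout the non-divergence form of \eqref{ap-par-plap}, $u^\ez_t[|Du^\ez|^2+\ez]^{\frac{2-p}{2}}=\Delta u^\ez+(p-2)\frac{\Delta_\fz u^\ez}{|Du^\ez|^2+\ez}$, together with the bound $|u^\ez_t|[|Du^\ez|^2+\ez]^{\frac{2-p}{2}}\le(p+n)|D^2u^\ez|$ from \eqref{eq8.37}. Write $g:=|Du^\ez|^2+\ez$. I take $\phi$ to be compactly supported in the open cylinder $Q_{2r}$, in the time direction as well, so that no boundary terms in $t$ arise --- harmless, since only an interior $L^2$ estimate is sought.

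The main step is an integration by parts in $x$ of the term $\frac1{p-2}\int_{Q_{2r}}u^\ez_t\Delta u^\ez\,g^{\frac{2-p}{2}}\phi^2$, writing $\Delta u^\ez={\rm div}(Du^\ez)$ and distributing the gradient. The piece in which the derivative falls on $u^\ez_t$ equals $-\frac1{2(p-2)}\int g_t\,g^{\frac{2-p}{2}}\phi^2=-\frac1{(p-2)(4-p)}\int\partial_t\big(g^{\frac{4-p}{2}}\big)\phi^2$, which upon a further integration by parts in $t$ becomes $\frac2{(p-2)(4-p)}\int g^{\frac{4-p}{2}}\phi\phi_t$ --- this is the origin of the last term on the right of \eqref{pcla1}. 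The piece in which the derivative falls on $g^{\frac{2-p}{2}}$, after inserting the equation in the form $u^\ez_t g^{-p/2}=\frac{\Delta u^\ez}{g}+(p-2)\frac{\Delta_\fz u^\ez}{g^2}$, is exactly $\int\frac{\Delta u^\ez\Delta_\fz u^\ez}{g}\phi^2+(p-2)\int\frac{(\Delta_\fz u^\ez)^2}{g^2}\phi^2$, whose second integral cancels the term $-(p-2)\int\frac{(\Delta_\fz u^\ez)^2}{g^2}\phi^2$ already present on the left of \eqref{pcla1}. The piece in which the derivative falls on $\phi^2$ is bounded by Young's inequality and \eqref{eq8.37} by $\eta\int|D^2u^\ez|^2\phi^2+C(n,p,\eta)\int|Du^\ez|^2|D\phi|^2$. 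Hence, up to terms already dominated by the right side of \eqref{pcla1}, the left side reduces to $-\int(\Delta u^\ez)^2\phi^2+\int\frac{\Delta u^\ez\Delta_\fz u^\ez}{g}\phi^2+\frac\ez2\int\frac{(\Delta u^\ez)^2}{g}\phi^2$.

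For this remainder I would use $\frac{|Du^\ez|^2}{g}=1-\frac\ez g$ and apply the pointwise inequality \eqref{keyineqx} to $u^\ez$ in the form $\Delta u^\ez\Delta_\fz u^\ez=|D^2u^\ez Du^\ez|^2-\frac12\big[|D^2u^\ez|^2-(\Delta u^\ez)^2\big]|Du^\ez|^2+R$ with $|R|\le\frac{n-2}2\big[|D^2u^\ez|^2|Du^\ez|^2-|D^2u^\ez Du^\ez|^2\big]$. A short computation then rewrites the remainder as $-\int\frac{\big(|D^2u^\ez|^2|Du^\ez|^2-|D^2u^\ez Du^\ez|^2\big)+\frac\ez2|D^2u^\ez|^2}{g}\phi^2+\frac12\int\big[|D^2u^\ez|^2-(\Delta u^\ez)^2\big]\phi^2+\int\frac Rg\phi^2$. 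The first integral is nonpositive, since $|D^2u^\ez Du^\ez|^2\le|D^2u^\ez|^2|Du^\ez|^2$ and $g>0$; the second is controlled in absolute value by $C(n)\int|Du^\ez|^2\big[|\phi||D^2\phi|+|D\phi|^2\big]$ through Lemma \ref{div}; and $\int\frac Rg\phi^2\le\frac{n-2}2\int\frac{|D^2u^\ez|^2|Du^\ez|^2-|D^2u^\ez Du^\ez|^2}{g}\phi^2$, so it is absorbed by (a fraction of) the first, nonpositive integral whenever $n\le4$. This yields \eqref{pcla1}, the only contribution to $\int|D^2u^\ez|^2\phi^2$ being the $\eta$-term from the last piece of the integration by parts.

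I expect the delicate point to be precisely this last accounting of the $\int|D^2u^\ez|^2\phi^2$-type terms: the net coefficient of $\int|D^2u^\ez|^2\phi^2$ on the right of \eqref{pcla1} must be an arbitrarily small $\eta$ rather than a dimensional constant, because in \eqref{pi-pn-plap-1} the coefficient $\frac n{2(p-2)^2}-\frac n2$ of $|D^2u^\ez|^2$ on the left degenerates as $p\to1^+$ or $p\to3^-$. For small dimension the cancellations above deliver this directly; for larger $n$ the defect $R$ in \eqref{keyineqx} has to be reabsorbed more carefully --- most naturally by not fully discarding the term $\frac n2\frac{|D^2u^\ez Du^\ez|^2}{g}$ retained in the pointwise inequality that precedes \eqref{pi-pn-plap-1}, and using it to dominate $\frac{|D^2u^\ez|^2|Du^\ez|^2-|D^2u^\ez Du^\ez|^2}{g}$ --- while all constants must be checked to remain bounded as $p\to2$.
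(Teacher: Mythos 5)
Your opening moves agree with the paper's: integrate by parts in $x$ the term $\frac1{p-2}\int u^\ez_t\Delta u^\ez\, g^{\frac{2-p}2}\phi^2$ (here $g=|Du^\ez|^2+\ez$), observe that the piece where the derivative hits $g^{\frac{2-p}2}$ equals $\int u^\ez_t\frac{\Delta_\fz u^\ez}{g}g^{\frac{2-p}2}\phi^2=(p-2)\int\frac{(\Delta_\fz u^\ez)^2}{g^2}\phi^2+\int\frac{\Delta u^\ez\Delta_\fz u^\ez}{g}\phi^2$ by the equation, let the $(p-2)$-term cancel the matching term on the left of \eqref{pcla1}, treat the $\phi$-derivative piece by Young's plus \eqref{eq8.37}, and treat the $u^\ez_t$-derivative piece by an integration by parts in $t$. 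All of this is exactly what the paper does, and your algebra reducing the left side to $-\int(\Delta u^\ez)^2\phi^2+\int\frac{\Delta u^\ez\Delta_\fz u^\ez}{g}\phi^2+\frac\ez2\int\frac{(\Delta u^\ez)^2}{g}\phi^2$ is correct.

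Where you diverge is in estimating this remainder, and there is a genuine gap. You feed the remainder through the structural inequality \eqref{keyineqx}, picking up the defect $R$ with $|R|\le\frac{n-2}2\big[|D^2u^\ez|^2|Du^\ez|^2-|D^2u^\ez Du^\ez|^2\big]$, and you absorb $\int\frac Rg\phi^2$ into the nonpositive term $-\int\frac{|D^2u^\ez|^2|Du^\ez|^2-|D^2u^\ez Du^\ez|^2}{g}\phi^2$. This absorption requires $\frac{n-2}2\le 1$, i.e.\ $n\le 4$, and you acknowledge this. But the lemma (and Theorem \ref{thm4}) is stated for all $n\ge1$, so your argument is incomplete. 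Moreover, your suggested repair for $n>4$ — using $\frac n2\frac{|D^2u^\ez Du^\ez|^2}{g}$ to dominate $\frac{|D^2u^\ez|^2|Du^\ez|^2-|D^2u^\ez Du^\ez|^2}{g}$ — cannot work pointwise: choose $Du^\ez$ in the kernel of $D^2u^\ez$ with $D^2u^\ez\ne 0$; then $|D^2u^\ez Du^\ez|=0$ while $|D^2u^\ez|^2|Du^\ez|^2>0$. So there is no constant $c_n$ with $c_n|D^2u^\ez Du^\ez|^2\ge|D^2u^\ez|^2|Du^\ez|^2-|D^2u^\ez Du^\ez|^2$.

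The paper never invokes \eqref{keyineqx} at this stage, and in hindsight it is the wrong tool here — it is both unnecessarily precise and carries an $n$-dependent defect. The paper's estimate of the remainder is elementary and works in every dimension: by Cauchy--Schwarz and Young's inequality (with the weight $\frac{|Du^\ez|}{\sqrt g}\le 1$),
\begin{align*}
\frac{\Delta_\fz u^\ez\,\Delta u^\ez}{g}\le\frac{|D^2u^\ez|\,|Du^\ez|^2\,|\Delta u^\ez|}{g}
&\le\frac12\frac{(\Delta u^\ez)^2|Du^\ez|^2}{g}+\frac12|D^2u^\ez|^2\\
&=(\Delta u^\ez)^2+\frac12\big[|D^2u^\ez|^2-(\Delta u^\ez)^2\big]-\frac\ez2\frac{(\Delta u^\ez)^2}{g},
\end{align*}
after which the $(\Delta u^\ez)^2$ and the $-\frac\ez2\frac{(\Delta u^\ez)^2}{g}$ cancel the other two pieces of the remainder exactly, leaving only the divergence-form term $\frac12\int[|D^2u^\ez|^2-(\Delta u^\ez)^2]\phi^2$, which Lemma \ref{div} handles, with no dimensional constraint and no dimensional constant multiplying $\int|D^2u^\ez|^2\phi^2$.
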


 \begin{proof}   First, by using integration by parts and Young's inequality,
\begin{align*}
&  \frac1{p-2}\int_{Q_{2r}}u^\ez_t \Delta u^\ez [|D u^\ez|^2+\ez]^{\frac{2-p}2}\phi^2\,dx\,dt\\
&\quad\le
 \int_{Q_{2r}}u^\ez_{t}    \frac{\Delta_\fz  u^\ez } {|Du^\ez|^2+\ez }[|D u^\ez|^2+\ez]^{ \frac{2- p }2}\phi^2\,dx\,dt \\
 &\quad\quad+\eta \int_{Q_{2r}}(u^\ez_{t})^2[|D u^\ez|^2+\ez]^{2-p}\phi^2 \,dx\,dt+
C(\eta)\int_{Q_{2r}}  |Du^\ez|^2 |D\phi|^2 \,dx\,dt\\
&\quad\quad+ C\int_{Q_{2r}}  [|D u^\ez|^2+\ez]^{\frac{4-p}2}  |\phi||\phi_t| \,dx\,dt.
\end{align*}
By \eqref{ap-par-plap}, we write
\begin{align*}
  u^\ez_{t}    \frac{\Delta_\fz  u^\ez } {|Du^\ez|^2+\ez }[|D u^\ez|^2+\ez]^{ \frac{2- p }2}
  &=(p-2)   \frac{(\Delta_\fz u^\ez)^2}{[|Du^\ez|^2+\ez]^2}   +   \frac{\Delta_\fz u^\ez \Delta u^\ez}{|Du^\ez|^2+\ez}.
  \end{align*}
 By H\"older inequality and Young's inequality,
  \begin{align*}
 \frac{\Delta_\fz u^\ez \Delta u^\ez}{|Du^\ez|^2+\ez}&
 \le \frac{|D^2 u^\ez||Du^\ez|^2|\Delta u^\ez|}{|Du^\ez|^2+\ez} \le
 \frac12 \frac{ ( \Delta u^\ez)^2|Du^\ez|^2}{|Du^\ez|^2+\ez}
   +   \frac12  |D^2u^\ez|^2  \\
   & =    ( \Delta u^\ez)^2
   +\frac12[|D^2u^\ez|^2 -(\Delta u^\ez)^2 ]- \frac12 \ez\frac{ ( \Delta u^\ez)^2  }{|Du^\ez|^2+\ez}.
\end{align*}
Noting by \eqref{eq8.37},
$$|u^\ez_{t}|[|D u^\ez|^2+\ez]^{\frac{2-p}2}\le (p+n)|D^2u^\ez|, $$
and using Lemma \ref{div}, we conclude
\eqref{pcla1}.
\end{proof}

Using Lemma \ref{div}, Lemmas \ref{th} and \ref{th5-1-1}, we  prove Lemma \ref{th5p3} as follows.

\begin{proof} [Proof of Lemma \ref{th5p3}.]
Note that $p\in(1,2)\cup(2,3)$ implies
$$\frac n{2(p-2)^2}-\frac n2>0.$$
Choose $\eta(n,p) = \frac14[\frac n{2(p-2)^2}-\frac n2]$.
Multiplying both sides of \eqref{pi-pn-plap-1} by $\phi^2$,
by Lemmas \ref{th} and \ref{th5-1-1} with $\eta=\eta(n,p)$ and by Lemma \ref{div},   we have   \eqref{duez-1} as desired.
 \end{proof}


\noindent  Hongjie Dong

\noindent  Division of Applied Mathematics, Brown University,
182 George Street, Providence, RI 02912, USA

\noindent{\it E-mail }:  \texttt{hongjie\_dong@brown.edu}
\bigskip

\noindent Peng Fa

\noindent
Department of Mathematics, Beihang University, Haidian District Xueyuan Road  No.37,  Beijing 100191, P. R. China

\noindent{\it E-mail }:  \texttt{pengfa@buaa.edu.cn}
\bigskip

\noindent Yi Zhang

\noindent ETH Zurich, Department of Mathematics, Ramistrasse 101, 8092 Zurich, Switzerland

\noindent{\it E-mail}: \texttt{yizhang3@ethz.ch}

\bigskip

\noindent  Yuan Zhou

\noindent
Department of Mathematics, Beihang University, Haidian District Xueyuan Road  No.37, Beijing 100191, P. R. China

\noindent{\it E-mail }:  \texttt{yuanzhou@buaa.edu.cn}
%

%
%
%
%
%
%
%
%
%
%
%
%


\begin{thebibliography}{99}


%


\bibitem{ap}
A. Attouchi and M. Parviainen,
{\em H\"older regularity for the gradient of the inhomogeneous parabolic normalized p-Laplacian.} Commun. Contemp. Math. 20  (2018), 1750035.

%

\vspace{-0.3cm}

\bibitem{ar}A. Attouchi  and   E. Ruosteenoja, {\em
Remarks on regularity for $p$-laplacian type
equations in non-divergence form}. J. Diff. Equ. 265 (2018), 1922-1961.

\vspace{-0.3cm}

\bibitem{bn54} L. Bers and L. Nirenberg, {\em On linear and non-linear elliptic boundary value problems
in the plane.} Convegno Internazionale sulle Equazioni Lineari alle Derivate Parziali,
Trieste, 1954, pp. 141-167, Edizioni Cremonese, Roma, 1955.

\vspace{-0.3cm}

\bibitem{bi87}
B. Bojarski and T. Iwaniec, {\em $p$-harmonic equation and quasiregular mappings.}
Partial differential equations (Warsaw, 1984), Banach Center Publ, 19, PWN, Warsaw (1987) 25-38.

\vspace{-0.3cm}

\bibitem{bg}
A. Banerjee and N. Garofalo, {\em Gradient bounds and monotonicity of the energy for some
nonlinear singular diffusion equations}. Indiana Univ. Math. J. 62 (2013), 699-736.

\vspace{-0.3cm}
\bibitem{bg15}
A. Banerjee and N. Garofalo, {\em On the Dirichlet boundary value problem
for the normalized p-Laplacian evolution.} Commun. Pure Appl. Anal.,
14 (2015), 1-21.


\vspace{-0.3cm}
\bibitem{cc95} L. A. Caffarelli and X. Cabr\'e, { Fully nonlinear elliptic equations.} volume 43 of American
Mathematical Society Colloquium Publications. American Mathematical Society,
Providence, RI, 1995.
\vspace{-0.3cm}
\bibitem{c67} S. Campanato, {\em Un risultato relativo ad equazioni ellittiche del secondo ordine di
tipo non variazionale.} (Italian) Ann. Scuola Norm. Sup. Pisa (3) 21 (1967), 701-707.

%
%
%

%

%

%

\vspace{-0.3cm}
\bibitem{c61}
  H. O. Cordes, {\em Zero order a priori estimates for solutions of elliptic differential equations.}
Proc. Sympos. Pure Math. 4 (1961), 157-166.


\vspace{-0.3cm}

\bibitem{d82}
  E. DiBenedetto,  {\em $C^{1+\alpha }$ local regularity of weak solutions of degenerate elliptic equations.} Nonlinear Anal. 7 (1983),  827-850.

\vspace{-0.3cm}

\bibitem{df85}
  E. DiBenedetto and A. Friedman, {\em H\"older estimates for nonlinear degenerate parabolic systems.}
J. Reine Angew. Math., 357 (1985), 1-22.
%

\vspace{-0.3cm}

\bibitem{d11}
K. Does, {\em An evolution equation involving the normalized p-Laplacian}. Commun. Pure Appl.
Anal., 10 (2011), 361-396.


\vspace{-0.3cm}

\bibitem{e82}
L. Evans,  {\em A  new proof of local $C^{1,\alpha}$-regularity for solutions of certain degenerated elliptic  P.D.E.}  J.  Differential Equations  45  (1982), 356-373.


%

%

\vspace{-0.3cm}

\bibitem{ett}
A. Elmoataz, M. Toutain, and D. Tenbrinck. {\em On the $p$-Laplacian and $\fz$-Laplacian on graphs with applications in image and data processing.}
SIAM J. Imaging Sci., 8 (2015), 2412-2451.



\vspace{-0.3cm}

\bibitem{g73}
F. W. Gehring,{\em  The $L^p$-integrability of partial derivatives of a quasiconformal mapping.}
Acta Math. 130 (1973), 265-277.

\vspace{-0.3cm}

\bibitem{g} M. Giaquinta,
 Multiple integrals in the calculus of variations and nonlinear elliptic systems,
 Princeton University Press, Princeton, New Jersey 1983.

%

\vspace{-0.3cm}

\bibitem{hl}
F. A.  H{\o}eg and P. Lindqvist, {\em Regularity of solutions of the parabolic normalized $p$-Laplace equation}. Adv. Nonlinear Anal. 9 (2020),  7-15.

\vspace{-0.3cm}

\bibitem{ijs}
C. Imbert, T.  Jin and L.  Silvestre, {\em
H\"older gradient estimates for a class of singular or degenerate parabolic equations.}
Adv. Nonlinear Anal., 8 (2019), 845-867.

\vspace{-0.3cm}

\bibitem{im89}
T. Iwaniec and J. J. Manfredi, {\em Regularity of $p$-harmonic functions on the plane.} Rev.
Mat. Ibero. 5(1989),1-19.

\vspace{-0.3cm}

\bibitem{jlm01}
 P. Juutinen, P. Lindqvist  and J. J. Manfredi, {\em On the equivalence of viscosity solutions
and weak solutions for a quasi-linear equation.} SIAM J. Math. Anal.  33(3):699-717,
2001

\vspace{-0.3cm}

\bibitem{jj12}
V. Julin and P. Juutinen, {\em A new proof for the equivalence of weak and viscosity solutions for the $p$-Laplace equation}.  Comm. Part. Diff. Equ.  37 (2012), 934-946.

\vspace{-0.3cm}

\bibitem{js}
T. Jin and L. Silvestre, {\em H\"{o}lder gradient estimates for parabolic homogeneous
p-Laplacian equations}. J.  Math. Pures  Appl.  108 (2017), 63-87.

\vspace{-0.3cm}

\bibitem{l86}
F. H. Lin, {\em Second derivative $L^p$-estimates for elliptic equations of nondivergent type.}
Proc. Amer. Math. Soc.  96 (1986), 447-451.


\vspace{-0.3cm}

 \bibitem{kzz}
   H. Koch, Y. Zhang and Y. Zhou, \emph{An asymptotic sharp Sobolev regularity for planar infinity harmonic
functions}. J.  Math. Pures  Appl.  2019, pp. 25 (to appear).

\vspace{-0.3cm}

 \bibitem{k07} N. V. Krylov,  {\em Parabolic equations with VMO coefficients in Sobolev spaces with mixed
norms.} J. Funct. Anal. 250 (2007),  521-558.
\vspace{-0.3cm}

\bibitem{l83}
J. Lewis,  {\em Regularity of the derivatives of solutions to certain elliptic equations.}
Indiana Univ. Math.  J.  32  (1983), 849-858.

\vspace{-0.3cm}


%

%


\bibitem{mw88}
J. J. Manfredi and A. Weitsman, {\em On the Fatou theorem for $p$-harmonic
functions}, Commun. Part. Diff. Equ. 13 (1988), 651-668.

%

\vspace{-0.3cm}

\bibitem{mpr}
J. J. Manfredi, M. Parviainen and J. Rossi, {\em An asymptotic mean value characterization for a class of nonlinear parabolic equations related to tug-of-war games}.
SIAM J.  Math. Anal.  42 (2010), 2058-2081.

\vspace{-0.3cm}

\bibitem{mps}
A. Maugeri, D. K. Palagachev and L. G. Softova,
Elliptic and Parabolic Equations with Discontinuous Coefficients.
Mathematical Research vol. 109,Wiley-VCH Verlag Berlin GmbH, Berlin
(Federal Republic of Germany), 2000.

\vspace{-0.3cm}

\bibitem{np14}
K. Nystr\"om and M. Parviainen, {\em Tug-of-war, market manipulation and
option pricing.} J. Math. Finan.   0  (2014), 1-34.

\vspace{-0.3cm}

\bibitem{ps}
 Y. Peres and S. Sheffield, {\em Tug-of-war with noise: a game-theoretic view of the p-Laplacian},
Duke Math. J.  145 (2008), 91-120.

\vspace{-0.3cm}

\bibitem{pssw}
Y. Peres, O. Schramm, S. Sheffield and D. B. Wilson, {\em  Tug-of-war and the
infinity Laplacian.}
J. Amer. Math. Soc. {22} (2009), 167-210.

\vspace{-0.3cm}

\bibitem{u68}
Ura'tselva, {\em Degenerate quasilinear elliptic systems, Zap. Nauch. Sem.} Otdel  7
(1968), 184-222, Mat.  Inst.,  Steklov, Leningrad. (In Russian.)

%

%

\vspace{-0.3cm}

\bibitem{t65}
G. Talenti, {\em Sopra una classe di equazioni ellittiche a coefficienti misurabili.} Ann.
Mat. Pura Appl. (4) 69 (1965) 285-304.



\vspace{-0.3cm}

\bibitem{u77}
K. Uhlenbeck, {\em Regularity for a class of non-linear elliptic systems. }  Acta Math.
138 (1977), 219--240.

\vspace{-0.3cm}

\bibitem{w86} M. Wiegner, {\em On $C^\alpha$-regularity of the gradient of solutions of degenerate parabolic systems.}
Ann. Mat. Pura Appl. (4), 145 (1986), 385--405.

\end{thebibliography}
\end{document}